\newcommand{\SL}{\mathrm{SL}}
\newcommand{\Q}{\mathbb{Q}}
\newcommand{\C}{\mathbb{C}}
\newcommand{\GL}{\mathrm{GL}}
\newcommand{\Z}{\mathbb{Z}}
\theoremstyle{plain}
\newtheorem{theorem}{Theorem}[section]
\newaliascnt{corollary}{theorem}
\newtheorem{corollary}[corollary]{Corollary}
\newaliascnt{corollary2}{theorem}
\newaliascnt{lemma}{theorem}
\newtheorem{lemma}[lemma]{Lemma}
\newaliascnt{proposition}{theorem}
\newtheorem{proposition}[proposition]{Proposition}
\newaliascnt{hypotheses}{theorem}
\theoremstyle{definition}
\newaliascnt{definition}{theorem}
\newtheorem{definition}[definition]{Definition}
\newaliascnt{example}{theorem}
\newaliascnt{remark}{theorem}
\newaliascnt{remarks}{theorem}
\numberwithin{equation}{section}
\begin{document}
\title{Eisenstein cohomology classes for $\mathrm{GL}_N$ over imaginary quadratic fields}

\author{Nicolas Bergeron}
\address{DMA UMR 8553 ENS / PSL and Sorbonne Université, 75005, Paris, France}
\email{nicolas.bergeron@ens.fr}
\urladdr{https://sites.google.com/view/nicolasbergeron/accueil}

\author{Pierre Charollois}
\address{Sorbonne Universit\'e, IMJ--PRG, CNRS, Univ Paris Diderot, F-75005, Paris, France}
\email{pierre.charollois@imj-prg.fr}
\urladdr{http://people.math.jussieu.fr/~charollois}

\author{Luis E. Garcia}
\address{Department of Mathematics, University College London, Gower Street, London WC1E 6BT, United Kingdom}
\email{l.e.garcia@ucl.ac.uk}

\begin{abstract}
We study the arithmetic of degree $N-1$ Eisenstein cohomology classes for the locally symmetric spaces attached to $\mathrm{GL}_N$ over an imaginary quadratic field $k$. Under natural conditions we evaluate these classes on $(N-1)$-cycles associated to degree $N$ extensions $L/k$ as linear combinations of generalised Dedekind sums. As a consequence we prove a remarkable conjecture of Sczech and Colmez expressing critical values of $L$-functions attached to Hecke characters of $F$ as polynomials in Kronecker--Eisenstein  series evaluated at torsion points on elliptic curves with multiplication by $k$. We recover in particular the algebraicity of these critical values. 
\end{abstract}
\maketitle

\setcounter{tocdepth}{1}
\tableofcontents

\section{Introduction}

The relationship between Eisenstein series, the cohomology of arithmetic groups and special values of $L$-functions has been studied extensively.

A classical example is that of weight $2$ Eisenstein series associated to a pair $(\alpha , \beta) \in (\Q / \Z)^2$. Such a series can be defined as limits of finite sums:
$$E_{2,(\alpha , \beta)} (\tau) = \lim_{M \to +\infty} \sum_{m=-M}^M \left(  \lim_{N \to +\infty} \sideset{}{'} \sum_{n=-N}^N \frac{e^{2i\pi (m \alpha + n \beta)}}{(m\tau +n)^2} \right) \quad (\tau \in \mathcal{H}).$$
Here the prime on the sum means that we exclude the term $(m,n)=(0,0)$. When $(\alpha , \beta) \neq (0,0)$, the holomorphic $1$-form 
$E_{2,(\alpha , \beta)} (\tau ) d\tau$ on Poincar\'e's upper half-plane $\mathcal{H}$ 
is invariant under any subgroup $\Gamma \subset \mathrm{SL}_2 (\Z)$ that fixes $(\alpha , \beta)$ modulo $\Z^2$. This holomorphic form then represents a cohomology class in $H^1 (\Gamma , \C)$. A remarkable feature of these classes is that they are rational and even almost integral. 

A convenient and compact way to state the precise integrality properties of these cohomology classes is to consider for each prime integer the `$p$-smoothed Eisenstein series'
\begin{equation*}
\begin{split}
E_{2 , (\alpha , \beta)}^{(p)} (\tau ) & = \sum_{j=1}^{p} E_{2 , (\alpha +j/p , \beta)} (\tau ) - p E_{2 , (\alpha , \beta)} (\tau) \\
& = p (E_{2 , (p \alpha , \beta)} (p \tau) - E_{2 , (\alpha , \beta)} (\tau) ),
\end{split}
\end{equation*}
and suppose furthermore that $\Gamma \subset \Gamma_0 (p)$. Then $E_{2, (\alpha , \beta)}^{(p)} (\tau )$ yields a homomorphism $\Phi_{(\alpha , \beta)}^{(p)} : \Gamma  \to \C$ 
by the rule 
$$\Phi_{(\alpha , \beta)}^{(p)} (\gamma)  = \int_{\tau_0}^{\gamma \tau_0} E_{2, (\alpha , \beta)}^{(p)} (\tau ) d\tau,$$
for any base point $\tau_0 \in \mathcal{H}$, and it is classical (see e.g. \cite[Theorem 13]{Siegel}) that we have:
\begin{equation} \label{E1}
\Phi_{(\alpha , \beta)}^{(p)} \left( \begin{array}{cc} a & b \\ c & d \end{array} \right) = \left\{ \begin{array}{ll}
0, & \mbox{if } c=0, \\
(2\pi)^2 \cdot \mathrm{sign} (c) \cdot \left( D_{p\alpha , \beta} \left( \frac{pa}{|c|} \right) -  p D_{\alpha , \beta} \left( \frac{a}{|c|} \right) \right), & \mbox{otherwise}.
\end{array} \right.
\end{equation}
Here $D_{\alpha , \beta}$ denotes the {\it generalised Dedekind sum}
$$D_{\alpha , \beta} \left( \frac{a}{c} \right) = \sum_{j=1}^{c} \left( \left(  \frac{j-\beta}{c} \right) \right) \ \left( \left(  \frac{a (j-\beta)}{c} - \alpha \right) \right) \quad \mbox{for} \quad c>0 \quad \mbox{and} \quad (a,c) =1,$$
where the symbol $((x))$ is defined by 
$$((x)) = \left\{ \begin{array}{ll} 
x- [x] - 1/2 & \mbox{if } x \mbox{ is not an integer}, \\
0 & \mbox{if } x \mbox{ is an integer}.
\end{array} \right.$$
These sums define rational numbers and enjoy many beautiful arithmetical properties, see e.g. \cite{Rademacher}. On the other hand a formula of Siegel \cite{Siegel} expresses the values at nonpositive integers of the $\zeta$-functions attached to real quadratic fields as periods of Eisenstein series. The expression \eqref{E1} can therefore be turned into a very explicit expression for these special values. This implies in particular that they are essentially integral which is the key input in the construction by Coates and Sinnott \cite{CoatesSinnott} of $p$-adic $L$-functions over real quadratic fields. 

Using Selberg's and Langlands' theory of Eisenstein series Harder has vastly generalised the above mentioned `Eisenstein cohomology classes.'  In \cite{Harder} he constructed a complement to the cuspidal cohomology for the group $\mathrm{GL}_2$ over number fields and manage to construct rational representatives. In a more recent work Harder even managed to address some integrality properties of these classes, see \cite{Harder2}. However for the group $\GL_N$ it is hard to check that Eisenstein classes are rational and the automorphic form theory is not yet adapted to the study of integrality properties of these classes. 

For $\GL_N$ over the field of rational numbers, Nori \cite{Nori} and Sczech \cite{Sczech} have proposed constructions of Eisenstein cohomology classes that have turned out to be very efficient in practice to study the fine arithmetical properties of $L$-functions over totally real number fields, see e.g. \cite{GS,CharolloisDasgupta,CDG,BKL}. Sczech's approach more generally gives formulas analogous to \eqref{E1}. The goal of this paper is to prove similar formulas for the group $\GL_N$ over an imaginary quadratic field $k$. As a consequence we prove a remarkable conjecture of Sczech and Colmez \cite[Conjecture p. 205]{Colmez} expressing critical values of $L$-functions attached to Hecke characters of finite extension of $k$ as polynomials in Kronecker--Eisenstein series evaluated at torsion points on elliptic curves with multiplication by $k$. 

We now describe in more details our main results. 

\subsection{An Eisenstein cocycle for imaginary quadratic fields}

Fix a positive integer $N \geq 2$. Let $k$ be a quadratic imaginary field with ring of integers $\mathcal{O}$ and let $\mathfrak{p} \subset \mathcal{O}$ be an ideal of prime norm $\mathrm{N}\mathfrak{p}$. Our first main result is the construction of an $(N-1)$-cocycle for the level $\mathfrak{p}$ congruence subgroup
\begin{equation}
\Gamma_0(\mathfrak{p})= \left\{ \left. \begin{pmatrix} a & {^t}b \\ c & D \end{pmatrix} \in \mathrm{SL}_N(\mathcal{O}) \ \right| \  a \in \mathcal{O}, b \in \mathcal{O}^{N-1}, c \in \mathfrak{p}^{N-1}, D \in M_{N-1}(\mathcal{O}) \right\}
\end{equation} 
of $\mathrm{SL}_N(\mathcal{O})$ taking values in the space of polynomials in certain classical series called Kronecker--Eisenstein series.

Let us recall the definition of Kronecker--Eisenstein series (see \cite[VIII.\S  12]{WeilEllipticFunctions}). We fix once and for all an embedding $\sigma:k \rightarrow \mathbb{C}$. For a fractional ideal $\mathfrak{I}$ of $k$ and non-negative integers $p$ and $q$, define
\begin{equation}
K^{p,q}(z,\mathfrak{I},s) = p! \sum_{\lambda \in \sigma(\mathfrak{I})} \frac{\overline{z+\lambda}^q}{(z+\lambda)^{p+1}|z+\lambda|^{s}}
\end{equation}
where we assume that $z \in \mathbb{C}$ satisfies $z \notin \sigma(\mathfrak{I})$. The series converges when $\mathrm{Re}(s)>1+q-p$ and has analytic continuation to $s \in \mathbb{C}$ that is regular at $s=0$. It is a classical result due to Damerell \cite{Damerell} that the values at $s=0$ have the following algebraicity property:
\begin{equation} \label{E:Damerell}
K^{p,q}(z_0,\mathfrak{I},0) \in \Omega_\infty^{1+p+q} \pi^{-q} \overline{\mathbb{Q}} \quad \text{for } z_0 \in k \backslash \sigma(\mathfrak{I}).
\end{equation}
Here $\Omega_\infty$ denotes any period of a $\overline{\mathbb{Q}}$-rational holomorphic differential against a non-zero rational homology class on an elliptic curve with CM by $k$ defined over $\overline{\mathbb{Q}}$. In fact these series have almost integral values; we refer to \cite{Katz} for precise results. 

Next we introduce polynomials in the series $K^{p,q}(z,\mathfrak{I},s)$. For fractional ideals $\mathfrak{I}_1,\ldots,\mathfrak{I}_N$ and multi-indices $I=(i_1,\ldots,i_N) \in \mathbb{Z}_{\geq 0}^N$ and $J=(j_1,\ldots,j_N) \in \mathbb{Z}_{\geq 0}^N$, we set
\begin{equation}
K^{I,J}(z, \mathfrak{I}_1 \oplus \cdots \oplus \mathfrak{I}_N, s) = K^{i_1,j_1}(z_1,\mathfrak{I}_1,s) \cdots K^{i_N,j_N}(z_N,\mathfrak{I}_N,s).
\end{equation}
More generally, for an $\mathcal{O}$-lattice $\Lambda \subset k^N$, we pick fractional ideals $\mathfrak{I}_1,\ldots,\mathfrak{I}_N$ such that $\mathfrak{I}_1 \oplus \cdots \oplus \mathfrak{I}_N$ has finite index in $\Lambda$ and set
\begin{equation}
\begin{split}
K^{I,J}(z, \Lambda, s) &= \sum_{\lambda \in \Lambda/ \mathfrak{I}_1 \oplus \cdots \oplus \mathfrak{I}_N} K^{I,J}(z+\sigma(\lambda), \mathfrak{I}_1 \oplus \cdots \oplus \mathfrak{I}_N, s) \\
&= \sum_{\lambda \in \Lambda} \prod_{1 \leq k \leq N} i_k !  \frac{\overline{z_k+\sigma(\lambda_k)}^{j_k}}{(z_k+\sigma(\lambda_k))^{i_k+1}|z_k+\sigma(\lambda_k)|^{s}}.
\end{split}
\end{equation}
As the last expression shows, $K^{I,J}(z,\Lambda,s)$ does not depend on the choice of the fractional ideals $\mathfrak{I}_k$. We set
\begin{equation}
K^{I,J}(z,\Lambda):= K^{I,J}(z,\Lambda,0).
\end{equation}
Each $K^{I,J}(z,\Lambda)$ defines a smooth function on an open subset of $\mathbb{C}^N$ obtained by removing all $\Lambda$-translates of a finite number of hyperplanes. We write
\begin{equation}
\mathcal{F}=\langle K^{I,J}(\gamma z, \Lambda) \ | \ \gamma \in \mathrm{SL}_N(k), \Lambda \subset k^N \text{ an } \mathcal{O}-\text{lattice} \rangle
\end{equation}
for the $\mathbb{C}$-span of $\mathrm{SL}_N(k)$-translates of all functions $K^{I,J}(z,\Lambda)$.

Next we introduce the $\mathfrak{p}$-smoothed series
\begin{equation}
K^{I,J}_\mathfrak{p}(z,\mathcal{O}^N) = K^{I,J}(z,\mathfrak{p}^{-1} \oplus \mathcal{O}^{N-1}) - \mathrm{N}\mathfrak{p} \cdot K^{I,J}(z,\mathcal{O}^N)
\end{equation}
and, for $A \in M_N(\mathcal{O})$, we define the generalised Dedekind sum $D_\mathfrak{p}^{I,J}(z,A)$ by
\begin{equation}
D_\mathfrak{p}^{I,J}(z,A) = \det A^{-1}  K_\mathfrak{p}^{I,J}(A^{-1}z,A^{-1}\mathcal{O}^N)
\end{equation}
if $A$ is invertible and set $D_\mathfrak{p}^{I,J}(z,A)=0$ otherwise. These sums are natural generalisations of Dedekind sums for imaginary quadratic fields and $N$ variables. 

Our first theorem shows that the series $D^{I,J}_\mathfrak{p}(z,A)$ can be combined into a homogeneous $(N-1)$-cocycle for $\Gamma_0(\mathfrak{p})$. In the following statement, for a multi-index $I \in \mathbb{Z}_{\geq 0}^N$, we write $|I|=i_1+\cdots + i_N$. When $I$ (resp. $J$) runs over multi-indices with $|I|=p$ (resp.  $|J|=q$),  the vectors 
$$
e^I:=e_1^{i_1}\cdots e_N^{i_n} \in \mathrm{Sym}^p \mathbb{C}^N \quad (\text{resp. } \overline{e}^{J}:=\overline{e_1}^{j_1} \cdots \overline{e_N}^{j_N}  \in \overline{\mathrm{Sym}^q \mathbb{C}^N})
$$
form a basis of $\mathrm{Sym}^p \mathbb{C}^N$ (resp. of $\overline{\mathrm{Sym}^q \mathbb{C}^N})$.

\begin{theorem} \label{Theorem1Intro}
Given $\gamma_1,\ldots,\gamma_N \in \Gamma_0(\mathfrak{p})$, define
$$
A(\underline{\gamma})=(\gamma_1 e_1|\cdots|\gamma_N e_1) \in M_N(\mathcal{O}).
$$
The map
$$
\mathbf{\Phi}^{p,q}_\mathfrak{p}: \Gamma_0(\mathfrak{p})^N \to \mathcal{F} \otimes \mathrm{Sym}^p{\mathbb{C}^N} \otimes \overline{\mathrm{Sym}^q\mathbb{C}^N}
$$
given by
$$
\mathbf{\Phi}^{p,q}_\mathfrak{p}(z,\underline{\gamma}) = \sum_{|I|=p, |J|=q} D_{\mathfrak{p}}^{I,J}(z,A(\underline{\gamma})) \otimes A(\underline{\gamma})(e^{I} \otimes \overline{e}^{J})
$$
is a homogeneous $(N-1)$-cocycle. Here the sum runs over all multi-indices $I,J \in \mathbb{Z}_{\geq 0}^N$ with $|I|=p$ and $|J|=q$.
\end{theorem}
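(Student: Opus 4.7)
My plan is to verify in turn the two defining properties of a homogeneous $(N-1)$-cocycle: $\Gamma_0(\mathfrak{p})$-equivariance and the alternating-sum coboundary relation. The first is essentially formal; the second carries the main content.

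For equivariance, the key identity is $A(g\underline{\gamma}) = g \cdot A(\underline{\gamma})$ for every $g \in \Gamma_0(\mathfrak{p})$. Combined with the transformation law of the Kronecker-Eisenstein series -- a direct change of variables in the defining lattice sum yields $K^{I,J}(gz, g\Lambda, s) = K^{I,J}(z, \Lambda, s)$ whenever $g$ preserves $\Lambda$ -- this reduces equivariance to the verification that elements of $\Gamma_0(\mathfrak{p})$ preserve both $\mathcal{O}^N$ and the auxiliary lattice $\mathfrak{p}^{-1} \oplus \mathcal{O}^{N-1}$. Both preservations are straightforward block-matrix computations from the explicit shape of matrices in $\Gamma_0(\mathfrak{p})$; the condition $c \in \mathfrak{p}^{N-1}$ in the definition of $\Gamma_0(\mathfrak{p})$ is precisely what is needed to keep the first coordinate of a vector in $\mathfrak{p}^{-1} \oplus \mathcal{O}^{N-1}$ inside $\mathfrak{p}^{-1}$. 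Together with the matching transformation of $A(e^I \otimes \overline{e}^J)$ inside $\mathrm{Sym}^p\mathbb{C}^N \otimes \overline{\mathrm{Sym}^q\mathbb{C}^N}$, this delivers the equivariance of $\mathbf{\Phi}^{p,q}_\mathfrak{p}$.

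The heart of the proof is the coboundary identity
$$
\sum_{i=0}^N (-1)^i \mathbf{\Phi}^{p,q}_\mathfrak{p}(z, \gamma_0, \ldots, \widehat{\gamma_i}, \ldots, \gamma_N) = 0.
$$
Setting $v_i = \gamma_i e_1 \in \mathcal{O}^N$, the $N+1$ vectors $v_0, \ldots, v_N$ span a combinatorial $N$-simplex in $k^N$ and satisfy a nontrivial linear relation, while the matrices $A_i = (v_0|\cdots|\widehat{v_i}|\cdots|v_N)$ are its codimension-one faces. My approach is to realise each $D_\mathfrak{p}^{I,J}(z, A)$ as the period of a closed $\mathrm{SL}_N(\mathcal{O})$-equivariant differential form built from the Kronecker-Eisenstein kernel on a suitable model of the symmetric space of $\mathrm{SL}_N(\mathbb{C})$, so that the alternating sum becomes the integral of this form over the boundary of the $(N+1)$-simplex with vertices $v_0, \ldots, v_N$, and the identity then follows by Stokes' theorem. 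As a backup, one can pursue a direct combinatorial derivation: expand $D_\mathfrak{p}^{I,J}(z, A_i)$ as a sum over $A_i^{-1}\mathcal{O}^N$ and regroup the terms in $\sum_i (-1)^i D_\mathfrak{p}^{I,J}(z, A_i)$ according to the common linear relation among the $v_i$, whence the expected cancellation.

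The main obstacle is convergence together with the contribution from degenerate configurations. The series $K^{I,J}(z, \Lambda)$ is defined only by analytic continuation from $\mathrm{Re}(s) \gg 0$, and a naive Stokes-type argument produces boundary contributions precisely from configurations where some $A_i$ fails to be invertible or where $z$ meets the singular hyperplanes of $K^{I,J}(z, \Lambda)$. The $\mathfrak{p}$-smoothing is designed to annihilate these unwanted terms: the operator $K \mapsto K_\mathfrak{p}$ projects away the problematic constant Fourier coefficients at the cusp of level $\mathfrak{p}$. I therefore expect the most delicate step to be a careful justification of the interchange of analytic continuation in $s$ with the alternating summation, so that the coboundary identity can be checked at the regularised level $\mathrm{Re}(s) \gg 0$ -- where everything is absolutely convergent and the Stokes/combinatorial argument applies cleanly -- and then specialised to $s = 0$.
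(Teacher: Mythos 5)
Your plan coincides with the paper's proof: equivariance is the formal identity $A(\gamma\gamma_1,\ldots,\gamma\gamma_N)=\gamma A(\gamma_1,\ldots,\gamma_N)$, and the coboundary relation is obtained exactly as you propose, by realising $D^{I,J}_{\mathfrak{p}}(z,A(\underline{\gamma}))$ as the period of a closed invariant Eisenstein $(N-1)$-form (the theta lift of the Schwartz form $\psi^{p,q}$) over a modular symbol $\Delta^\circ(\underline{\gamma})$ --- a translated torus orbit --- and applying Stokes' theorem to a filling chain built from cones in the Tits compactification, the $\mathfrak{p}$-smoothing ensuring rapid decrease (vanishing constant term) at the relevant good cusps. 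The only shift of emphasis is that the paper does not need to interchange analytic continuation with the alternating sum: it works directly with the continued, closed form at $s=0$, the genuinely delicate steps being the rapid-decrease proposition and the cone/modular-symbol geometry at the boundary.
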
 

More concretely, the cocycle property of $\mathbf{\Phi}^{p,q}_\mathfrak{p}$ means that
\begin{equation}
\begin{split}
\mathbf{\Phi}^{p,q}_\mathfrak{p} (\gamma z,\gamma \gamma_1,\ldots, \gamma \gamma_N) &= \gamma \mathbf{\Phi}^{p,q}_\mathfrak{p} (\gamma  z,\gamma_1,\ldots,\gamma_N) \\
&= \sum_{|I|=p, \ |J|=q} D_\mathfrak{p}^{I,J}(z,A(\underline{\gamma})) \otimes \gamma A(\underline{\gamma})(e^I \otimes \overline{e}^J)
\end{split}
\end{equation}
for any $\gamma,\gamma_1,\ldots,\gamma_N \in \Gamma_0(\mathfrak{p})$, and
$$
\sum_{1 \leq k \leq N+1} (-1)^{k-1} \mathbf{\Phi}^{p,q}_\mathfrak{p} (z,\gamma_1,\ldots,\widehat{\gamma_k},\ldots,\gamma_{N+1})=0
$$
for any $\gamma_1,\ldots,\gamma_{N+1} \in \Gamma_0(\mathfrak{p})$ (here as usual the notation $\widehat{\gamma_k}$ means that the term $\gamma_k$ is to be omitted).

More generally, in the body of the paper we introduce a cocycle $\mathbf{\Phi}^{p,q}_\mathfrak{p} (z,\underline{\gamma},\Lambda(\mathfrak{I}))$ for the $\mathcal{O}$-lattice $\Lambda(\mathfrak{I})=\mathfrak{I}^{-1} \oplus \mathcal{O}^{N-1}$ of $k^N$.

\subsection{Application to critical values of Hecke $L$-functions} \label{S:intro12}
We refer to \cite[Section 1]{Fargues} or \cite{ClozelMotifs,SerreAbelianl-adic,RohrlichRootNumbers} for generalities on Hecke characters.
Let $L/k$ be a field extension of degree $N > 1$ and let $n:L \to k$ denote the norm map. We fix an algebraic Hecke character $\psi_k$ of $k$ of infinity type $(p,q) \in \mathbb{Z}^2$ and a Dirichlet character $\chi$ of $L$, 
and consider the algebraic Hecke character
\begin{equation}
\label{eq:alg_Hecke_char_intro}
\phi = \chi \cdot ( \psi_k \circ n )
\end{equation}
of $L$. We denote the conductor of $\phi$ by $\mathfrak{f}$, so that for $\alpha \equiv 1 \mod \mathfrak{f}$ we have
$$
\phi((\alpha)) = n(\alpha)^{p} \overline{n(\alpha)}^q.
$$
Note that if $k$ is a maximal CM field in $L$ then any algebraic Hecke character $\phi$ of $L$ is of the above form. 

The Hecke $L$-function of $\phi$ is
$$
L(\phi,s) =  \prod_{(\mathfrak{P} , \mathfrak{f})=1} (1-\phi (\mathfrak{P}) \mathrm{N}\mathfrak{P}^{-s})^{-1} = \sum_{(\mathfrak{a},\mathfrak{f})=1} \phi(\mathfrak{a})\mathrm{N}\mathfrak{a}^{-s},
$$
where the sum, resp. the product, runs over integral ideals $\mathfrak{a}$, resp. prime ideals $\mathfrak{P}$, of $\mathcal{O}_L$ coprime to $\mathfrak{f}$. The global L-function of $\phi$ is $\Lambda(\phi,s) =  L_{\infty}(\phi,s)L(\phi,s)$, where
\[
L_\infty(\phi,s) = \prod_{v|\infty} \Gamma(\phi_v,s).
\]
Here each $\phi_v$ with $v | \infty$ is of the form 
$$z^{p} \overline{z}^{q} = (z\overline{z})^{w/2} \left( \frac{z}{\overline{z}} \right)^{(p-q)/2},$$
with $w=p+q$ (the weight), and 
$$\Gamma (\phi_v ,s) = 2 (2\pi)^{-(s-w/2+|p-q|/2)} \Gamma \left( s-\frac{w}{2}+\frac{|p-q|}{2} \right).$$
The value $L(\phi,s_0)$ at an integer $s_0 \in \mathbb{Z}$ is said to be critical if and only if  
$$\mathrm{ord}_{s=s_0} L_\infty(\phi,s) =\mathrm{ord}_{s=s_0} L_\infty (\phi^{-1},1-s)= 0.$$
In our case this is equivalent to 
$$
\frac{w}{2}-\frac{|p-q|}{2} < s_0 <1+\frac{w}{2}+\frac{|p-q|}{2}.
$$

Our second main result is that for critical $s_0$ the value $L(\phi,s_0)$ can be expressed as an explicit polynomial in Kronecker--Eisenstein series; answering positively a conjecture of Sczech and Colmez \cite[Conjecture p. 205]{Colmez}. Note that the complex conjugate $\overline{\psi_k}$ has weight $(q,p)$ and that multiplying $\phi$ by an integral power of the norm character shifts $s$ by an integer. Thus we may assume that $p<0$ and $q \geq 0$ and consider only the critical value $L(\phi,0)$.

Our result is more conveniently expressed in terms of partial zeta functions, as follows. For integers $p,q$ and integral ideals $\mathfrak{a}$, $\mathfrak{f}$ of $\mathcal{O}_L$, define
\begin{equation}
\zeta_\mathfrak{f}^{p,q}(\mathfrak{a},s)= \sideset{}{'}\sum_{x \in U(\mathfrak{f})\backslash 1+\mathfrak{fa}^{-1}} \frac{\overline{n(x)}^q}{n(x)^{p+1} |n(x)|^{2s}}, \qquad \mathrm{Re}(s) \gg 0.
\end{equation}
Here $U(\mathfrak{f})$ denotes the group of units of $\mathcal{O}_L^\times$ that are congruent to $1$ modulo $\mathfrak{f}$.
(Since $u\overline{u}=1$ for every $u \in \mathcal{O}^\times$, this is well-defined provided that $p+q+1$ is divisible by the order of the subgroup $n(U(\mathfrak{f}))$ of $\mathcal{O}^\times$, which we assume.) Choosing integral ideals $\mathfrak{a}_1,\ldots,\mathfrak{a}_r$ giving a system of representatives for the ray class group $C_\mathfrak{f}$ we can write
$$
L(\phi,s) = \sum_{j} \phi(\mathfrak{a}_j)\mathrm{N}\mathfrak{a}_j^{-s}\zeta^{-p-1,q}_{\mathfrak{f}}(\mathfrak{a}_j,s).
$$

Given two distinct prime ideals $\mathfrak{P}$ and $\tilde{\mathfrak{P}}$ of $\mathcal{O}_L$ coprime to $\mathfrak{f}$ and $\mathfrak{a}$, we define also the `smoothed' partial zeta functions
\begin{equation}
\begin{split}
\zeta^{p,q}_{\mathfrak{f},\mathfrak{P}}(\mathfrak{a},s)&= \mathrm{N}\mathfrak{P}^{-s} \zeta^{p,q}_\mathfrak{f}(\mathfrak{aP},s)-\mathrm{N}\mathfrak{P}^{1-s} \zeta^{p,q}_\mathfrak{f}(\mathfrak{a},s) \\
\zeta^{p,q}_{\mathfrak{f},\mathfrak{P},\tilde{\mathfrak{P}}}(\mathfrak{a},s)&=\mathrm{N}\tilde{\mathfrak{P}}^{-s} \zeta^{p,q}_\mathfrak{f,\mathfrak{P}}(\mathfrak{a\tilde{P}},s)-\mathrm{N}\tilde{\mathfrak{P}}^{-s} \zeta^{p,q}_{\mathfrak{f},\mathfrak{P}}(\mathfrak{a},s).
\end{split}
\end{equation}
These modified zeta functions appear in an expression for $L(\phi,s)$ with modified Euler factors at $\mathfrak{P}$ and $\tilde{\mathfrak{P}}$. Namely, setting 
$$L_\mathfrak{P} (\phi \cdot \mathrm{N},s)=(1-\phi(\mathfrak{P})\mathrm{N}\mathfrak{P}^{1-s})^{-1}, \quad  
L_{\tilde{\mathfrak{P}}}(\phi,s)=(1-\phi(\tilde{\mathfrak{P}})\mathrm{N}\tilde{\mathfrak{P}}^{-s})^{-1},$$ 
and using the fact that $\mathfrak{a}_1 \mathfrak{P},\ldots,\mathfrak{a}_r \mathfrak{P}$ is also a system of representatives of $C_\mathfrak{f}$, we have
\begin{equation}
\begin{split}
L_{\mathfrak{P}}(\phi \cdot \mathrm{N},s)^{-1}L(\phi,s) &= \sum_{j} \phi(\mathfrak{a}_j \mathfrak{P})\mathrm{N}\mathfrak{a}_j ^{-s}\zeta^{-p-1,q}_{\mathfrak{f},\mathfrak{P}}(\mathfrak{a}_j,s), \\
L_{\tilde{\mathfrak{P}}}(\phi,s)^{-1}L_{\mathfrak{P}}(\phi \cdot \mathrm{N},s)^{-1}L(\phi,s) &=\sum_{j} \phi(\mathfrak{a}_j \mathfrak{P} \tilde{\mathfrak{P}})\mathrm{N}\mathfrak{a}_j^{-s}\zeta^{-p-1,q}_{\mathfrak{f},\mathfrak{P},\tilde{\mathfrak{P}}}(\mathfrak{a}_j,s),
\end{split}
\end{equation}

\autoref{Theorem2Intro} below shows that, for appropriate choices of $\mathfrak{P}$ and $\tilde{\mathfrak{P}}$, the zeta function $\zeta_{\mathfrak{f},\mathfrak{P},\tilde{\mathfrak{P}}}^{-p-1,q}(\mathfrak{a}_j,s)$ can be expressed using the Eisenstein cocycle of \autoref{Theorem1Intro}.

Let $U(\mathfrak{f})^1 \subseteq U(\mathfrak{f})$ be the subgroup of units of relative norm one, and let $U(\mathfrak{f})' \subseteq U(\mathfrak{f})^1$ be a torsion--free subgroup that maps bijectively to $U(\mathfrak{f})^1/U(\mathfrak{f})^1_{\mathrm{tors}}$. We also fix a isomorphism $\alpha: L \to k^N$ of $k$-vector spaces and denote the $\mathcal{O}$-lattice $\alpha(\mathfrak{fa}^{-1}) \subset k^N$ by $\Lambda(\mathfrak{fa}^{-1})$.\footnote{In \autoref{L41} we prove that $\Lambda(\mathfrak{fa}^{-1})$ is of type $\Lambda (\mathfrak{I})$.} Through the isomorphism $\alpha$ the automorphism of $L$ defined by multiplication by $u_i$ corresponds to a matrix $U_i$ that belongs to the intersection $\Gamma (\Lambda(\mathfrak{fa}^{-1}))$ of $\mathrm{Aut}_\mathcal{O} (\Lambda(\mathfrak{fa}^{-1}))$ with $\SL_N (k)$. Moreover, given a prime ideal $\mathfrak{P}$ of $\mathcal{O}_L$ coprime to $\mathfrak{f}$ and $\mathfrak{a}$ and of prime norm $\mathfrak{p} = n (\mathfrak{P})$, the matrices $U_i$ belong to $\Gamma_0(\mathfrak{p},\Lambda(\mathfrak{fa}^{-1}))$.

We denote by $\sigma_1,\ldots,\sigma_N$ the embeddings of $L$ into $\mathbb{C}$ that restrict to the fixed embedding $\sigma:k \to \mathbb{C}$.

\begin{theorem} \label{Theorem2Intro}
Let $p,q$ be non-negative integers, $\mathfrak{f}$ be an ideal of $\mathcal{O}_L$ and let $\mathfrak{a}_1,\ldots,\mathfrak{a}_r$ be integral ideals that form a system of representatives of the ray class group $C_\mathfrak{f}$. Then there exist $v_0 \in k^N$ and two prime ideals $\mathfrak{P}$ and $\tilde{\mathfrak{P}}$ with $\mathfrak{p} = n(\mathfrak{P})$ prime, such that
\begin{multline*}
[U(\mathfrak{f}):U(\mathfrak{f})']\det(\sigma_i(\alpha(e_j))) \zeta_{\mathfrak{f},\mathfrak{P},\tilde{\mathfrak{P}}}^{p,q}(\mathfrak{a},0) \\ = \frac{1}{(p !)^N}  \sum_{\sigma \in S_{N-1}} \mathrm{sgn}(\sigma) \sum_{\substack{x \in \tilde{\mathfrak{P}}^{-1} \mathfrak{f} / \mathfrak{f} \\ x \neq 0}} \langle \mathbf{\Phi}_{\mathfrak{p}}^{pN,qN}(v_0+ \alpha (x),\underline{u}_\sigma,\Lambda (\mathfrak{fa}^{-1})) , (n \circ \alpha^{-1})^p \otimes \overline{(n\circ \alpha^{-1})}^q \rangle
\end{multline*}
for $\mathfrak{a} \in \{\mathfrak{a}_1,\ldots,\mathfrak{a}_r\}$. Here the first sum runs over the symmetric group $S_{N-1}$ of permutations on $N-1$ letters and for $\sigma \in S_{N-1}$ we set $\underline{u}_\sigma=(1,U_{\sigma(1)},U_{\sigma(1)}U_{\sigma(2)},\ldots,U_{\sigma(1)} \cdots U_{\sigma(N-1)})$.
\end{theorem}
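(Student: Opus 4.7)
The strategy is to transport both sides to $k^N$ using the isomorphism $\alpha$ and then recognise the smoothed partial zeta value as the evaluation of the cocycle of \autoref{Theorem1Intro} against a Shintani-style signed fundamental chain for the action of $U(\mathfrak{f})$.

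First, I would push everything through $\alpha$. The coset $1 + \mathfrak{f}\mathfrak{a}^{-1} \subset L$ becomes a coset $v_0 + \Lambda(\mathfrak{f}\mathfrak{a}^{-1})$ in $k^N$ with $v_0 = \alpha(1)$, and $U(\mathfrak{f})$ acts through the norm-one matrices $U_i$. Extending scalars to $\mathbb{C}$ via the embeddings $\sigma_1,\ldots,\sigma_N$, the norm $n:L \to k$ becomes the polynomial $n \circ \alpha^{-1}$ on $k^N$, which over $\mathbb{C}$ factors as a product of $N$ linear forms $\ell_i(z) = \sigma_i(\alpha^{-1}(z))$; the Jacobian of the change of coordinates $(z_1,\ldots,z_N) \leftrightarrow (\ell_1,\ldots,\ell_N)$ is $\det(\sigma_i(\alpha(e_j)))$, which accounts for the determinantal factor on the left-hand side.

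Second, I would rewrite $\zeta^{p,q}_{\mathfrak{f},\mathfrak{P},\tilde{\mathfrak{P}}}(\mathfrak{a}, 0)$ as a lattice sum. After this unfolding, it becomes a sum over $U(\mathfrak{f})$-orbit representatives in $v_0 + \Lambda(\mathfrak{f}\mathfrak{a}^{-1})$ of $\prod_i \overline{\ell_i(z)}^q / \ell_i(z)^{p+1}$. Passing back to $k^N$-coordinates expresses this summand as the pairing of monomials $z^I \bar{z}^J$ with $|I|=pN$, $|J|=qN$ against the covector $(n \circ \alpha^{-1})^p \otimes \overline{(n\circ\alpha^{-1})}^q$, divided by $(p!)^N$. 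The $\mathfrak{P}$- and $\tilde{\mathfrak{P}}$-smoothings of the partial zeta function translate, at the level of the lattice sum, into (i) the replacement of $K^{I,J}$ by the $\mathfrak{p}$-smoothed series $K^{I,J}_\mathfrak{p}$ (coming from $\mathfrak{P}$, of prime norm $\mathfrak{p}$) and (ii) the sum over $x \in \tilde{\mathfrak{P}}^{-1}\mathfrak{f}/\mathfrak{f}$ with $x \neq 0$, exactly as they appear in the theorem.

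Third, I would identify the remaining sum, over a fundamental domain for $U(\mathfrak{f})/U(\mathfrak{f})_{\mathrm{tors}} \cong \mathbb{Z}^{N-1}$ acting on the coset $v_0 + \Lambda(\mathfrak{f}\mathfrak{a}^{-1})$, with the signed sum over $\sigma \in S_{N-1}$ of the tuples $\underline{u}_\sigma$. The key input is a homological identity: the signed chain
\[
\sum_{\sigma \in S_{N-1}} \mathrm{sgn}(\sigma) \, [1, U_{\sigma(1)}, U_{\sigma(1)}U_{\sigma(2)}, \ldots, U_{\sigma(1)}\cdots U_{\sigma(N-1)}]
\]
represents the fundamental class in the top homology of the free abelian group generated by $U_1, \ldots, U_{N-1}$. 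Paired against the homogeneous $(N-1)$-cocycle $\mathbf{\Phi}_\mathfrak{p}^{pN,qN}$ of \autoref{Theorem1Intro}, this precisely evaluates the cocycle on a full fundamental domain for the $U(\mathfrak{f})$-action, converting the orbit sum into the summand on the right-hand side. The main obstacle is this third step: establishing that the alternating sum over $S_{N-1}$ really realises the fundamental domain for the unit action in the sense required. This is the imaginary quadratic analogue of Sczech's signed cone decomposition for totally real fields, and rests on a combinatorial identity akin to Solomon's decomposition of the positive orthant into simplicial cones indexed by permutations, adapted to the action of the $U_i$ on $k^N_{\mathbb{C}}$; it also requires careful treatment of the torsion subgroup $U(\mathfrak{f})_{\mathrm{tors}}$ of roots of unity. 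Once this identity is in hand, the remaining verification, including the bookkeeping of the constants $1/(p!)^N$ and the determinantal factor, follows from the explicit definitions of $D_\mathfrak{p}^{I,J}$ and $K_\mathfrak{p}^{I,J}$.
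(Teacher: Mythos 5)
There is a genuine gap, and it sits exactly where you locate the ``main obstacle'' --- but the obstacle is not the one you name. The combinatorial fact that $\sum_{\sigma \in S_{N-1}} \mathrm{sgn}(\sigma)\,[1,U_{\sigma(1)},U_{\sigma(1)}U_{\sigma(2)},\ldots]$ represents the fundamental class of the free abelian group $\langle u_1,\ldots,u_{N-1}\rangle$ is the easy part: it is just the standard simplicial decomposition of the cube $[0,1]^{N-1}$ into $(N-1)!$ simplices, and in the paper it appears as the one-line decomposition of the fundamental domain $\mathcal{D}$. What does \emph{not} follow from any Solomon/Shintani-style cone identity is your claim that pairing the cocycle against this chain ``converts the orbit sum into the summand on the right-hand side.'' The two sides are of different analytic natures: $\zeta^{p,q}_{\mathfrak{f},\mathfrak{P},\tilde{\mathfrak{P}}}(\mathfrak{a},s)$ is a unit-orbit sum that converges only for $\mathrm{Re}(s)\gg 0$ and is evaluated at $s=0$ by analytic continuation, while each $\mathbf{\Phi}^{pN,qN}_{\mathfrak{p}}(v,\underline{u}_\sigma,\Lambda)$ is a generalised Dedekind sum, i.e.\ a value at $s=0$ of Kronecker--Eisenstein series over \emph{full} lattices $A(\underline{u}_\sigma)^{-1}\Lambda$, not a partial sum over a cone of the original lattice. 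No term-by-term rearrangement of a conditionally convergent (indeed, at $s=0$, non-convergent) series turns one into the other, and in the complex place setting there is no rational polyhedral cone decomposition of the norm-one hypersurface to fall back on; this is precisely why the conjecture of Sczech--Colmez was open.

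The paper's mechanism, which your proposal is missing, is geometric-analytic: \autoref{prop:Eisenstein_zeta_value} realises $\det(\sigma_i(\alpha_j))\,\zeta^{p,q}_{\mathfrak{f},\mathfrak{P}}(\mathfrak{a},0)$ as the period of the closed Eisenstein $(N-1)$-form $E_\mathfrak{p}(v_0;\psi^{pN,qN},\Lambda(\mathfrak{I}))$ over the compact torus cycle $X(\mathfrak{f})$, \autoref{prop:Eis_integral_on_modular_symbols} realises the cocycle values as periods of the same closed form over ideal modular symbols $\Delta^\circ(\underline{u}_\sigma)$ at the Tits boundary, and an explicit equivariant homotopy $H(\underline{u}_\sigma,x_\alpha)$ together with Stokes' theorem identifies the two. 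Stokes is legitimate only because the form is rapidly decreasing on Siegel sets of the relevant (good) cusps (\autoref{thm:good_cusp_rapid_decrease}), and this is exactly what the auxiliary smoothing at $\tilde{\mathfrak{P}}$ is for: by \autoref{lemma:Colmez_Schneps} one must choose $\tilde{\mathfrak{P}}$ of residue degree one over $\mathcal{O}$ and of norm larger than a constant depending on the subspaces $\langle U_{\sigma(j)}e_1 \rangle$, so that every shifted vector $v_0+\alpha(x)$, $x\in\tilde{\mathfrak{P}}^{-1}\mathfrak{f}/\mathfrak{f}$, $x\neq 0$, avoids all lattice translates of those subspaces --- which is also needed just for the Dedekind sums on the right-hand side to be defined at these arguments. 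In your write-up the $\tilde{\mathfrak{P}}$-smoothing is treated as bookkeeping in rewriting the $L$-function, so the existential clause ``there exist $\mathfrak{P}$ and $\tilde{\mathfrak{P}}$'' has no content and the step equating the regularised orbit sum with the signed sum of Dedekind sums is asserted rather than proved. To repair the proposal you would either have to reproduce this period/Stokes argument (the route of the paper) or supply a genuinely new summation-theoretic argument controlling the analytic continuation under the cone rearrangement, which is not sketched.
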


Note that each term of the double sum on the right hand side is a generalised Dedekind sum and therefore a polynomial in Kronecker--Eisenstein series evaluated at torsion points on elliptic curves with multiplication by $k$. From \eqref{E:Damerell} we deduce the following corollary. Note that $\phi(\mathfrak{a}_j)\in \overline{\mathbb{Q}}$, so that the algebraicity of $L(\phi,0)$ follows from that of the $\zeta^{-p-1,q}_\mathfrak{f}(\mathfrak{a}_j,0)$.

\begin{corollary} \label{Cintro}
Let $\Omega_\infty$ be any non--zero period of a $\overline{\mathbb{Q}}$-rational global differential on an elliptic curve with complex multiplication by $k$, defined over $\overline{\mathbb{Q}}$. Let $
\phi$ be an algebraic Hecke character of the form \eqref{eq:alg_Hecke_char_intro}. Assume that $p<0$ and $q \geq 0$. Then
$$
L(\phi,0) \in  \Omega_\infty^{N(q-p)}\pi^{-Nq} \overline{\mathbb{Q}}.
$$
\end{corollary}

\medskip
\noindent
{\it Remark.} As was pointed out to us by Don Blasius, one can take $\Omega_\infty$ to be a period of a holomorphic differential on an elliptic curve defined over $k_{ab}$ --- the maximal  abelian extension of $k$. Then
$$
L(\phi,0) \in  \Omega_\infty^{N(q-p)}\pi^{-Nq} k_{ab}E,
$$
where $E$ is the CM field generated by the values of $\phi$. This follows from the fact that the ratio of two arithmetic automorphic functions with Fourier coefficients in $\Q_{ab}$, and having the same weight, takes value in  $k_{ab}$ when evaluated at a CM point.

In fact, one can be more precise: Blasius \cite{Blasius} proves a reciprocity law for values at CM points of modular forms which generalizes that of Shimura for functions. According to it, if a value transforms by a Hecke character, then it is the Deligne period of the motive attached to the Hecke character. Since Theorem \ref{Theorem2Intro} expresses the $L(\phi,0)$ as a linear combination of products of values of $L$-functions of modular Eisenstein series, the general law of Blasius should apply to show the following: Let $M(\phi)$ be the motive --- defined over $L$, with coefficients in $E$, and of rank one --- attached to $\phi$ and let $c^+ \mathrm{Res}_{L/\Q} M(\phi )$ be the period attached by Deligne \cite[\S 8]{Deligne}, we have 
$$L (\mathrm{Res}_{L/\Q} M(\phi )) = c^+ \mathrm{Res}_{L/\Q} M(\phi ) \in (E \otimes \C )^\times / E^\times $$
as conjectured by Deligne \cite{Deligne} as part of a much more general picture. 

%
%

\subsection*{Relation to other works}
In the case $N=2$ \autoref{Theorem1Intro} is proved by Sczech \cite{Sczech84} and Ito \cite{Ito}, in case $(p,q)=(0,0)$, and Obaisi \cite{Obaisi} proved the corresponding \autoref{Theorem2Intro}. In general, partial results towards both \autoref{Theorem2Intro} and \autoref{Cintro} are obtained by Colmez in \cite{Colmez}; see also \cite{FKW} for related works.

\autoref{Cintro} is not new. In the case $L=k$ it is due to Damerell \cite{Damerell}. In the case $N=2$ it is due to Ito \cite{Ito}.  In general it is a particular case of a theorem announced by Harder in \cite{HarderSchappacher,Harder90} that deals with Hecke $L$-functions associated to extensions $L/k$ with $k$ an arbitrary CM fields. When $L=k$ is CM this was known before thanks to works of Shimura \cite{Shimura} and Blasius \cite{Blasius}. Harder provided a proof of his theorem for $N=2$ in \cite{Harder}, but to the authors' knowledge the full details of Harder's proof for $N>2$ have never appeared in print. However the fact that the (regularised) $L$-value of the Hecke character divided by the `Katz period' is algebraic has recently been fully proved by Kings and Sprang \cite{KingsSprang} using completely different techniques that allow them to deduce good integrality results. This generalises works of Shimura and Katz in the case of a CM field to arbitrary extensions of CM fields. In the case where $k$ is a quadratic imaginary field, integrality results of the same quality could be deduced from \autoref{Theorem2Intro} and works of Katz \cite{Katz} showing that certain regularisation (`smoothing') of the expression \eqref{E:Damerell} are algebraic integers. We expect that, in combination with recent work of Andreatta and Iovita \cite{AndreattaIovita}, the explicit formula of \autoref{Theorem2Intro}, conjectured by Sczech and Colmez, could be used to $p$-adically interpolate $L$-values of algebraic Hecke characters of $F$ in the non split case.

Note that, quite similarly as in the work of Kings and Sprang, the cohomology class studied in this paper takes its roots in a certain equivariant cohomology class; we discuss the latter in \cite{French}. The topological origin of this class is enough to give an elementary direct  proof of the integrality of critical values of Hecke L-functions associated to totally real fields, see~\cite{Nori,BKL,Takagi}. 

To conclude let us mention that it is not clear to us if the  formula of \autoref{Theorem2Intro} can be generalised to the case where $k$ is an arbitrary CM field.

\subsection{Notation and conventions} We write $|S|$ for the cardinality of a set $S$. Throughout the paper we fix an integer $N \geq 2$ and let
\begin{equation*}
\begin{split}
V &= \mathbb{C}^N \text{ (column vectors)}.
\end{split}
\end{equation*}
We write $\overline{V}=V \otimes_\mathbb{C} \overline{\mathbb{C}}$ for the complex conjugate of $V$ and $V^\vee$ for the ($\mathbb{C}$-linear) dual of $V$; we identify $V^\vee$ with the space of length $N$ row vectors using the standard dot product. We write $e_1,\ldots,e_N$ for the standard basis of $V$ and $z_1,\ldots,z_N$ for the standard coordinates on $V$ and set $\partial_{z_i}=\partial/\partial z_i$. For a multi-index $I=(i_1,\ldots,i_N) \in \mathbb{Z}_{\geq 0}^N$, we write 
\begin{equation*}
\begin{split}
e^I &=e_1^{i_1} \cdots e_N^{i_N} \in \mathrm{Sym}^N V  \\
z^I &= z_1^{i_1} \cdots z_N^{i_N} \in \mathrm{Sym}^N V^\vee \\
\overline{z}^I &= \overline{z_1}^{i_1} \cdots \overline{z_N}^{i_N} \in \mathrm{Sym}^N \overline{V}^\vee.
\end{split}
\end{equation*}

We denote the transpose of a matrix $X$ by ${^t}X$ and set $X^*={^t}\overline{X}$. We denote by $1_N$ the identity matrix of rank $N$ and by $\mathrm{diag}(t_1,\ldots,t_N)$ a diagonal matrix with diagonal entries $t_1,\ldots,t_N$. Let
\begin{equation*}
\begin{split}
G &= \mathrm{SL}_N(\mathbb{C}) \\
K &= \mathrm{SU}(N) \\
X &= \mathrm{SL}_N(\mathbb{C})/\mathrm{SU}(N).
\end{split}
\end{equation*}

The Lie algebras of $G$ and $K$ are denoted by $\mathfrak{g}$ and $\mathfrak{k}$ respectively.

We write $A^*(X)$ for the space of smooth differential forms on $X$.

Throughout the paper we fix an imaginary quadratic field $k$ and an embedding $\sigma:k \to \mathbb{C}$. We write $\mathrm{N}\mathfrak{p}$ for the norm of a prime ideal $\mathfrak{p}$. We denote by $\mathcal{O}$ the ring of integers of $k$  and define
\begin{equation*}
\begin{split}
V_k &= k^N \text{ (column vectors)} \\
G_k &= \mathrm{SL}_N(k) \\
\mathbf{G} &= \mathrm{Res}_{k/\mathbb{Q}} \mathrm{SL}_{N,k} \\
\end{split}
\end{equation*}

The standard simplex (a simplicial set) is denoted by $\Delta_N$, and its geometric realization by $|\Delta_N|$. We write $\Delta_k * \Delta_r$ for the join of two simplices.

\section{Differential forms on the symmetric space of $\mathrm{SL}_N(\mathbb{C})$}

Fix an integer $N \geq 2$ and let $V=\mathbb{C}^N$ (column vectors). We identify the points of the symmetric space
$$
X:=G/K
$$
of $G=\mathrm{SL}_N(\mathbb{C})$ with positive definite hermitian $N$-by-$N$ matrices $h$ of unit determinant via the map
$$
gK \mapsto h:= {^t}\overline{g}^{-1}\cdot g^{-1};
$$
under this identification the action of $g \in G$ on $X$ by left multiplication corresponds to the action $g \cdot h := {^t}\overline{g}^{-1}h g^{-1}$. A matrix $h \in X$ defines a positive definite hermitian form on $\mathbb{C}^N$ given by $v \mapsto v^*h v$. The entries $h_{ij}$ $(1 \leq i, j \leq N)$ of $h$ define smooth functions $h_{ij}:X \to \mathbb{C}$. 

We write $\mathcal{S}(V)$ for the space of Schwartz functions on $V$.
For $p, q \geq 0$, let
\begin{equation} \label{eq:Vpq_definition}
V^{p,q}=\mathrm{Sym}^p V^\vee \otimes \mathrm{Sym}^q \overline{V};
\end{equation}
it is naturally a representation of $G$. We will identify elements of $V^{p,q}$ with linear functionals on the tensor product of the complex vector spaces $\mathrm{Sym}^p V$ (homogeneous holomorphic polynomials of degree $p$ on $V^\vee$) and $\mathrm{Sym}^q \overline{V}^\vee$ (homogeneous anti-holomorphic polynomials of degree $q$ on $V$).

The natural action of $G$ on $\mathcal{S}(V)$ defined by $(g \cdot f)(v)=f(g^{-1}v)$ turns $\mathcal{S}(V) \otimes V^{p,q}$ into a smooth $G$-module. Let $A^*(X;\mathcal{S}(V) \otimes V^{p,q})$ be the space of differential forms on $X$ valued in $\mathcal{S}(V) \otimes V^{p,q}$. This space carries an action of $G$ given by 
$$
(g , \omega(x,Y)) \mapsto g \cdot \omega(g^{-1}x,g^{-1}Y), \quad x \in X, \ Y \in \land T_x X.
$$
In this section we introduce $G$-invariant differential forms
\begin{equation}
\begin{split}
\psi^{p,q} \in A^{N-1}(X;\mathcal{S}(V) \otimes V^{p,q})^G 
\end{split}
\end{equation}
valued in this $G$-module.

\subsection{Polynomial forms} Fix a vector $v \in V$. We write $(h v)_1,\ldots,(h v)_N$ (resp. $(dh v)_1,\ldots,(dh v)_N)$ for the components of the vector $hv$ (resp. $dh v$):
\begin{equation}
\begin{split}
(hv)_i &= \sum_{1 \leq j \leq N} h_{ij}v_j \in C^\infty(X). \\
(dhv)_i &= \sum_{1 \leq j \leq N} dh_{ij} v_j \in A^1(X).
\end{split}
\end{equation}
Define
\begin{equation} \label{eq:definition_p}
p(v) = 2 (-1)^{N(N-1)/2} \sum_{i \geq 1} (-1)^{i-1} (h v)_i  (dhv)_N \wedge \cdots \wedge \widehat{(dhv)_i} \wedge \cdots \wedge (dhv)_1 \in A^{N-1}(X).
\end{equation}
(Here, as usual, the term under the symbol $\widehat{\quad}$ is to be omitted). Note that, as a function of $v$, $p$ is a (holomorphic) polynomial of degree $N$ and so defines a form
$$
p \in A^{N-1}(X;\mathbb{C}[V]).
$$
The conjugate polynomial $\overline{p(v)}$ defines a form in $A^{N-1}(X;\mathbb{C}[\overline{V}])$. Since $h$ is hermitian, we have
$$
\overline{(hv)_i} = \sum_{j} \overline{h_{ij} v_j} = \sum_j \overline{v_j} h_{ji} = (v^* h)_i
$$
and so we can write
\begin{equation} \label{eq:def_p}
\overline{p(v)} = 2 (-1)^{N(N-1)/2} \sum_{i \geq 1} (-1)^{i-1} (v^*h)_i  (v^* dh)_N \wedge \cdots \wedge \widehat{(v^*dh)_i} \wedge \cdots \wedge (v^* dh)_1.
\end{equation}

\begin{lemma} \label{lemma:p_invariance}
The form $\overline{p}$ is $G$-invariant. That is, for $g \in G$ we have
$$
g^*\overline{p(gv)}=\overline{p(v)}, \qquad v \in V.
$$
\end{lemma}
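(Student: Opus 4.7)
The plan is to reduce $G$-invariance to a multilinear-algebra identity. First I would unpack how the two building blocks $u_i := (v^*h)_i$ and $\omega_i := (v^*dh)_i$ transform under the simultaneous substitution $v \mapsto gv$ and $h \mapsto g \cdot h = g^{-*}h g^{-1}$ (so $dh \mapsto g^{-*} dh\, g^{-1}$). Using $(gv)^* = v^* g^*$ and $g^*(g^*)^{-1} = 1$, one gets the clean formulas $(gv)^*(g \cdot h) = v^* h g^{-1}$ and $(gv)^*(g \cdot dh) = v^* dh \cdot g^{-1}$. In components, setting $A := g^{-1}$, this means that both $u$ and $\omega$, regarded as length-$N$ row vectors, are acted on by right multiplication by $A$:
\[
u_i \longmapsto \sum_{m=1}^N u_m A_{mi}, \qquad \omega_i \longmapsto \sum_{m=1}^N \omega_m A_{mi}.
\]

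Next, I would observe that the sum appearing in \eqref{eq:def_p} admits a ``wedge-determinant'' description: for any scalar row $u=(u_1,\ldots,u_N)$ and any $1$-form row $\omega=(\omega_1,\ldots,\omega_N)$,
\[
\sum_{i=1}^N (-1)^{i-1}\, u_i\, \omega_1 \wedge \cdots \wedge \widehat{\omega_i} \wedge \cdots \wedge \omega_N = \frac{1}{(N-1)!} \sum_{\sigma \in S_N} \mathrm{sgn}(\sigma)\, M_{1,\sigma(1)}\, M_{2,\sigma(2)} \wedge \cdots \wedge M_{N,\sigma(N)},
\]
where $M$ is the $N\times N$ matrix whose first row is $u$ and whose remaining $N-1$ rows are all equal to $\omega$. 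This is verified by grouping permutations according to the value $i = \sigma(1)$; antisymmetry of the wedge collapses the $(N-1)!$ rearrangements of $(\omega_1,\ldots,\widehat{\omega_i},\ldots,\omega_N)$ into a single term, and a short sign computation shows that the accumulated sign is $(-1)^{i-1}$, matching \eqref{eq:def_p}.

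Finally, I would verify that this wedge determinant behaves correctly under right multiplication of $M$ by an arbitrary scalar matrix $A$: expanding $\sum_\sigma \mathrm{sgn}(\sigma)\prod_j (MA)_{j,\sigma(j)}$ and exchanging the order of summation, the inner sum over $\sigma$ produces, for each tuple $(k_1,\ldots,k_N)$, the determinant of the matrix whose rows are rows $k_1,\ldots,k_N$ of $A$; this vanishes unless $(k_j)$ is a permutation $\pi$, in which case it contributes $\mathrm{sgn}(\pi)\det A$. One thus obtains
\[
\sum_{\sigma} \mathrm{sgn}(\sigma)\, (MA)_{1,\sigma(1)}\, (MA)_{2,\sigma(2)} \wedge \cdots \wedge (MA)_{N,\sigma(N)} = (\det A)\, \sum_{\sigma} \mathrm{sgn}(\sigma)\, M_{1,\sigma(1)}\, M_{2,\sigma(2)} \wedge \cdots \wedge M_{N,\sigma(N)}.
\]
Combining the three steps, $g^*\overline{p(gv)}$ is obtained from $\overline{p(v)}$ by the substitution $(u,\omega) \mapsto (uA,\omega A)$, which multiplies the result by $\det A = \det g^{-1} = 1$; hence $g^*\overline{p(gv)} = \overline{p(v)}$. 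The only delicate point is the first step, where hermiticity of $h$ is crucial in order to produce a single clean factor $g^{-1}$ on the right (rather than a mixture of $g^{-1}$ and $g^{-*}$); the remainder is pure bookkeeping.
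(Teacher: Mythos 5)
Your proof is correct, and it takes a more computational route than the paper's. Both arguments start from the identical key observation that $(gv)^*(g\cdot h)=v^*h\,g^{-1}$ and $(gv)^*(g\cdot dh)=v^*dh\,g^{-1}$, so that the row vectors $u=(v^*h)$ and $\omega=(v^*dh)$ both transform by right multiplication by $A=g^{-1}$. From there the paper argues representation-theoretically: the span $W$ of the $(v^*h)_i$ and the span $\tilde W$ of the $(v^*dh)_i$ are copies of $V^\vee$, the line $\wedge^N\tilde W$ is the trivial representation (this is where $\det g=1$ enters), and $\overline{p(v)}$ is, via the induced pairing, the canonical invariant element $\sum_i e_i\otimes e_i^\vee$ of $W\otimes W^\vee$, hence $G$-invariant. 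You instead expand everything: you identify the alternating sum with a permutation-sum ``wedge determinant'' of the $N\times N$ array whose first row is $u$ and whose remaining rows are $\omega$, and then show by a Cauchy--Binet-type expansion that right multiplication of this array by a scalar matrix $A$ multiplies the expression by $\det A=1$. Your two identities are correct (the grouping by $i=\sigma(1)$ gives the factor $(N-1)!$ and the sign $(-1)^{i-1}$, and the inner sum over $\sigma$ in the last display is indeed $\mathrm{sgn}(\pi)\det A$ or zero), so the argument goes through; your version is more elementary and self-contained, while the paper's is shorter once the dual-basis framing is accepted, and in fact your $\det A$ computation is exactly what makes the paper's $\wedge^N\tilde W$ trivial. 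Two cosmetic points: in \eqref{eq:def_p} the wedge factors appear in decreasing order $(v^*dh)_N\wedge\cdots\wedge(v^*dh)_1$, so your increasing-order identity matches $\overline{p(v)}$ only up to the overall constant $(-1)^{(N-1)(N-2)/2}$ (independent of $i$, $v$, $h$, hence harmless for invariance); and the clean factor $g^{-1}$ in the first step comes from $g^*={^t}\overline{g}$ cancelling the ${^t}\overline{g}^{-1}$ in the action, not from hermiticity of $h$ --- hermiticity is what was used earlier to put $\overline{p(v)}$ into the form \eqref{eq:def_p} in the first place.
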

\begin{proof}
Let us assume that $v \neq 0$ (the case $v=0$ is obvious). The statement then follows from the fact that given a representation of a group $G$ on an $N$-dimensional complex vector space $W$, and a basis $e_1,\ldots,e_N$ of $W$ with dual basis $e_1^\vee,\ldots,e_N^\vee \in W^\vee$, the element $e_1 \otimes e_1^\vee + \cdots + e_N \otimes e_N^\vee$ of $W \otimes W^\vee$ is $G$-invariant.

Namely, consider the $\mathbb{C}$-vector space $W \subset \mathcal{C}^\infty(X)$ spanned by $(v^*h)_{1},\ldots,(v^*h)_{N}$. For $g \in G$ we have
$$
(gv)^*(g\cdot h)=v^*{^t}\overline{g}({^t}\overline{g}^{-1}hg^{-1}) = v^*h g^{-1}.
$$
This shows that $W$ is naturally a representation of $G$ that is isomorphic to the dual $V^\vee$ of $V$. The same statement (with same proof) holds for the $\mathbb{C}$-vector space $\tilde{W} \subset A^1(X)$ spanned by $(v^*dh)_{1},\ldots,(v^*dh)_{N}$.

Consider the map
$$
W \otimes \wedge^{N-1} \tilde{W} \to \wedge^N \tilde{W}, \quad w \otimes \tilde{w} = dw \wedge \tilde{w}.
$$
Here $\wedge^N \tilde{W} \simeq \mathbb{C} \cdot (v^*dh)_{N} \wedge \ldots \wedge (v^*dh)_{1}$ is isomorphic to the trivial $G$-representation via the map $z \cdot (v^*dh)_{N} \wedge \ldots \wedge (v^*dh)_{1} \mapsto z$. Thus we obtain a pairing $W \otimes \wedge^{N-1}\tilde{W} \to \mathbb{C}$. A direct check shows that the basis $(-1)^{N-i}(v^*dh)_{N} \wedge \cdots \wedge \widehat{(v^*dh)_{i}} \wedge \cdots \wedge (v^*dh)_{1}$ ($1 \leq i \leq N$) of $\wedge^{N-1} \tilde{W}$ is dual to the basis $(v^*h)_{i}$ ($1 \leq i \leq N$) of $W$, and the lemma follows.
\end{proof}

\begin{lemma} \label{lemma:p_closed}
Let $v \neq 0$. Then the form 
$$
(v^*hv)^{-N}\overline{p(v)} \in A^{N-1}(X)
$$
is closed.
\end{lemma}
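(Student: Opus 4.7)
The plan is to verify closedness by direct computation, exploiting two simple structural observations about the ingredients of $\overline{p(v)}$. Set $\omega_i = (v^*dh)_i$ and $f_i = (v^*h)_i$ for $1 \leq i \leq N$. Then (i) $df_i = \omega_i$ (just differentiating the entries of $h$), and (ii) $d\omega_i = 0$, since $\omega_i = \sum_k \overline{v_k}\,dh_{ki}$ is a constant $\mathbb{C}$-linear combination of closed $1$-forms. Reordering $\omega_N \wedge \cdots \widehat{\omega_i} \cdots \wedge \omega_1$ into the increasing order introduces the sign $(-1)^{(N-1)(N-2)/2}$, so that equation \eqref{eq:def_p} rewrites as
\begin{equation*}
\overline{p(v)} = 2(-1)^{N-1} \sum_{i=1}^N (-1)^{i-1} f_i\, \omega_1 \wedge \cdots \widehat{\omega_i} \cdots \wedge \omega_N,
\end{equation*}
using $N(N-1)/2 + (N-1)(N-2)/2 = (N-1)^2 \equiv N-1 \pmod{2}$.

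Next I compute separately the two summands of
\begin{equation*}
d\bigl((v^*hv)^{-N}\overline{p(v)}\bigr) = -N(v^*hv)^{-N-1}\, d(v^*hv) \wedge \overline{p(v)} + (v^*hv)^{-N}\, d\overline{p(v)}.
\end{equation*}
For the second, since $d\omega_i = 0$, Leibniz on the display gives $d\overline{p(v)} = 2(-1)^{N-1}\sum_i (-1)^{i-1}\omega_i \wedge \omega_1 \wedge \cdots \widehat{\omega_i} \cdots \wedge \omega_N$; each term equals $\omega_1 \wedge \cdots \wedge \omega_N$ after passing $\omega_i$ past $i-1$ factors, so $d\overline{p(v)} = 2(-1)^{N-1} N\,\omega_1 \wedge \cdots \wedge \omega_N$. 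For the first, $d(v^*hv) = v^*(dh)v = \sum_i v_i\,\omega_i$; wedging with $\overline{p(v)}$, only the diagonal terms survive and the same sign manipulation gives $d(v^*hv) \wedge \overline{p(v)} = 2(-1)^{N-1}\bigl(\sum_i v_i f_i\bigr)\,\omega_1 \wedge \cdots \wedge \omega_N$. The key algebraic identity $\sum_i v_i\,(v^*h)_i = v^*hv$ then collapses this to $2(-1)^{N-1}(v^*hv)\,\omega_1 \wedge \cdots \wedge \omega_N$.

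Substituting, both contributions equal $\pm 2N(-1)^{N-1}(v^*hv)^{-N}\,\omega_1 \wedge \cdots \wedge \omega_N$ with opposite signs, so they cancel and the form is closed. The assumption $v \neq 0$ is used only to ensure $v^*hv > 0$, so that $(v^*hv)^{-N}$ is a smooth function on $X$. There is no analytic content; the only obstacle is bookkeeping the signs, which is controlled entirely by the parity coincidence $(N-1)^2 \equiv N-1 \pmod{2}$ and by the cancellations produced by antisymmetry of the wedge product. In retrospect, the formula should be thought of as the interior product of $\omega_1 \wedge \cdots \wedge \omega_N$ with a vector field pairing $\omega_i$ to $f_i$, and the key identity is the Euler-type relation $\sum v_i f_i = v^*hv$.
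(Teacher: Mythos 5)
Your argument is correct and is essentially the paper's own proof: the paper likewise verifies closedness by establishing the identity $N\,d(v^*hv)\wedge\overline{p(v)}=(v^*hv)\,d\overline{p(v)}$ (equation \eqref{eq:eta_closed_equivalent_form}), computing $d\overline{p(v)}$ as a multiple of $(v^*dh)_N\wedge\cdots\wedge(v^*dh)_1$ and collapsing $d(v^*hv)\wedge\overline{p(v)}$ via the same diagonal-term cancellation and the relation $\sum_j v_j(v^*h)_j=v^*hv$. Your reordering of the wedge factors and the explicit Leibniz-rule cancellation are only cosmetic differences in bookkeeping.
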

\begin{proof}
An equivalent statement is the equality
\begin{equation} \label{eq:eta_closed_equivalent_form}
N d(v^*hv) \wedge \overline{p(v)} =(v^*hv) d\overline{p(v)}.
\end{equation}
Differentiating \eqref{eq:def_p} we obtain
\begin{equation} \label{eq:dp}
\begin{split}
d\overline{p(v)} &= 2 (-1)^{N(N-1)/2} d \left( \sum_{i \geq 1} (-1)^{i-1} (v^* h)_i  (v^* dh)_N \wedge \cdots \wedge \widehat{(v^*dh)_i} \wedge \cdots \wedge (v^* dh)_1 \right) \\
&= 2 N (-1)^{(N+2)(N-1)/2}  (v^* dh)_N \wedge \cdots \wedge (v^* dh)_1.
\end{split}
\end{equation}
On the other hand we have $d(v^*hv) = \sum_j (v^*dh)_j v_j$ and hence
\begin{multline*}
Nd(v^*hv) \wedge \overline{p(v)} \\
\begin{split}
 &= 2 N (-1)^{N(N-1)/2}  \left(\sum_{j} (v^* dh)_j v_j\right)  \wedge \left(\sum_{i \geq 1} (-1)^{i-1} (v^*h)_i  (v^* dh)_N \wedge \cdots \wedge \widehat{(v^*dh)_i} \wedge \cdots \wedge (v^* dh)_1\right) \\
&= 2 N (-1)^{N(N-1)/2} \sum_j (-1)^{j-1} (v^*dh)_j v_j (v^*h)_j (v^* dh)_N \wedge \cdots \wedge \widehat{(v^*dh)_j} \wedge \cdots \wedge (v^* dh)_1 \\
&= 2 N (-1)^{(N+2)(N-1)/2} \left( \sum_j v_j (v^*h)_j \right) (v^*dh)_N \wedge \cdots \wedge (v^*dh)_1 \\
&= (v^*hv) d\overline{p(v)}.
\end{split}
\end{multline*}
and the assertion follows.
\end{proof}

\subsection{Schwartz forms} We can now define the forms $\psi^{p,q}$ mentioned in the introduction to this section. 
First we consider the case $p=q=0$: for $v \in V$,  we define
\begin{equation}
\psi^{0,0}(v) = e^{-v^*hv} \overline{p(v)}.
\end{equation}

\medskip
\noindent
{\it Remark.} The form $\psi^{0,0}$ arises naturally as a component of a characteristic form defined by Mathai and Quillen. More precisely, the vector bundle $\mathcal{V}=X \times V$ over $X$ with fiber $V$ carries a tautological metric, and the main result of \cite{MathaiQuillen} is the construction of a canonical Thom form $U \in A^{2N}(X \times V)$ and an infinitesimal transgression $\tilde{U}$ of $U$ in $A^{2N-1}(X)$ (denoted $-i_X U_t$ in \cite[\S  7]{MathaiQuillen}). A vector $v \in V$ defines a section of $\mathcal{V}$ over $X$, and the form $\psi^{0,0}(v)$ is essentially obtained from $\tilde{U}$ by contracting with the vector fields $\partial_{z_1}, \ldots,\partial_{z_N}$ (this gives a form in $A^{N-1}(X \times V)$) and then pulling back by this section. We refer to \cite{Takagi} for more details on this perspective.

\medskip

Note that the hermitian form $v \mapsto v^*hv$ on $V$ is positive definite and so $\psi^{0,0}$, as a function of $v$, belongs to the Schwartz space $\mathcal{S}(V)$. Also note that, for any $g \in G$, the expression $v^*hv$ is invariant upon replacing $h$ with $g^*h$ and $v$ with $gv$, and so \autoref{lemma:p_invariance} implies that $\psi^{0,0}$ is $G$-invariant:
\begin{equation} \label{eq:psi_0_invariance}
g^*\psi^{0,0}(gv)=\psi^{0,0}(v), \qquad g \in G.
\end{equation}
Thus $\psi^{0,0} \in A^{N-1}(X;\mathcal{S}(V))^G$.

For arbitrary $p,q \geq 0$ we define
$$
\psi^{p,q} \in A^{N-1}(X;\mathcal{S}(V)\otimes V^{p,q})^G
$$
so that its value on $P \otimes \overline{Q}$, where $P$ (resp. $Q$) is a holomorphic polynomial of degree $p$ on $V^\vee$ (resp. a holomorphic polynomial of degree $q$ on $V$), is given by
\begin{equation}
\begin{split}
\psi^{p,q}(v,P \otimes \overline{Q})= \overline{Q}(\overline{v})P(-\partial_{z_1},\ldots,-\partial_{z_N}) \psi^{0,0}(v)
\end{split}
\end{equation}
From now on we often omit the indices $p,q$ and simply write $\psi(v,P \otimes \overline{Q})$.
One can give a more explicit expression for $\psi(v,P \otimes \overline{Q})$: the identity
\begin{equation}
-\partial_{z_i} \left( e^{-v^*hv} \right)=(v^*h)_i e^{-v^*hv}
\end{equation} 
gives
\begin{equation}
P(-\partial_{z_1},\ldots,-\partial_{z_N})  \left( e^{-v^*hv} \right) = P((v^*h)_1,\ldots,(v^*h)_N) e^{-v^*hv};
\end{equation}
since if $\overline{p(v)}$ is an anti-holomorphic polynomial we have $\partial_{z_i}\overline{p(v)} = 0$ for all $i$, and we conclude that
\begin{equation}
\psi(v,P \otimes \overline{Q}) = e^{-v^*hv} \overline{p(v,P,Q)},
\end{equation}
with
\begin{equation}
\overline{p(v,P,Q)} := \overline{Q(v)}P(v^*h)\overline{p(v)}.
\end{equation}
Note that $\overline{p(\cdot,P,Q)}$ is an anti-holomorphic polynomial in $v$ of degree $N+p+q$. This expression shows that, generalizing the invariance property \eqref{eq:psi_0_invariance}, we have
\begin{equation} \label{eq:psi_invariance_with_coeffs}
\begin{split}
g^*\psi(gv, gP \otimes \overline{gQ}) &= \psi(v,P \otimes \overline{Q}).
\end{split}
\end{equation}
The following is a generalization of \autoref{lemma:p_closed}.

\begin{lemma} \label{lemma:p_closed_coeffs}
Let $v \neq 0$. For any $P \in \mathrm{Sym}^p V$ and $Q \in \mathrm{Sym}^q V^\vee$, the form
$$
(v^*hv)^{-N-p} \overline{p(v,P,Q)} \in A^{N-1}(X)
$$
is closed.
\end{lemma}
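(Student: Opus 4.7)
The plan is to reduce to the closedness of $(v^*hv)^{-N-p} P(v^*h)\overline{p(v)}$. Since $Q$ is a polynomial in $v$ only and does not depend on $h$, the factor $\overline{Q(v)}$ in $\overline{p(v,P,Q)}$ is a constant from the point of view of the exterior derivative on $X$, so it can be pulled out and ignored.

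Next I would compute directly. Setting $f = (v^*hv)^{-N-p} P(v^*h)$, Leibniz yields
$$d(f\overline{p(v)}) = -(N+p)(v^*hv)^{-N-p-1} d(v^*hv) \wedge P(v^*h) \overline{p(v)} + (v^*hv)^{-N-p} dP(v^*h) \wedge \overline{p(v)} + (v^*hv)^{-N-p} P(v^*h) d\overline{p(v)}.$$
The closedness statement of \autoref{lemma:p_closed}, rewritten as $N d(v^*hv) \wedge \overline{p(v)} = (v^*hv)\, d\overline{p(v)}$, lets me replace the last term and combine with the first, leaving
$$d(f\overline{p(v)}) = (v^*hv)^{-N-p-1} \Bigl[ (v^*hv)\, dP(v^*h) \wedge \overline{p(v)} - p\, P(v^*h)\, d(v^*hv) \wedge \overline{p(v)} \Bigr].$$
So the task reduces to showing the bracket vanishes.

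The key identity is
$$(v^*dh)_i \wedge \overline{p(v)} = \frac{(v^*h)_i}{N}\, d\overline{p(v)}, \qquad 1 \leq i \leq N.$$
This I would verify directly from the explicit formula \eqref{eq:def_p}: when one wedges $(v^*dh)_i$ against the sum, only the term with $j=i$ survives, and tracking the sign against \eqref{eq:dp} produces exactly the factor $(v^*h)_i/N$. Given this, writing $dP(v^*h) = \sum_i (\partial_i P)(v^*h)(v^*dh)_i$ gives
$$dP(v^*h) \wedge \overline{p(v)} = \frac{1}{N}\sum_i (v^*h)_i (\partial_i P)(v^*h)\, d\overline{p(v)} = \frac{p}{N} P(v^*h)\, d\overline{p(v)}$$
by Euler's identity applied to the homogeneous polynomial $P$ of degree $p$. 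Using once more the identity of \autoref{lemma:p_closed} to rewrite $d\overline{p(v)}$ in terms of $d(v^*hv) \wedge \overline{p(v)}$, the bracket collapses to zero.

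I do not expect any serious obstacle here: the proof is a direct extension of the argument for \autoref{lemma:p_closed} with the extra homogeneity of $P$ absorbing the shift from $-N$ to $-N-p$ in the exponent. The only point that demands care is the sign computation giving the key identity $(v^*dh)_i \wedge \overline{p(v)} = N^{-1} (v^*h)_i d\overline{p(v)}$, which is essentially the same sign bookkeeping already carried out in the proof of \autoref{lemma:p_closed}.
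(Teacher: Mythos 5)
Your proof is correct and follows essentially the same route as the paper: reduce to $Q=1$, establish that $dP(v^*h)\wedge\overline{p(v)}=pN^{-1}P(v^*h)\,d\overline{p(v)}$, and combine this with the identity $N\,d(v^*hv)\wedge\overline{p(v)}=(v^*hv)\,d\overline{p(v)}$ from \autoref{lemma:p_closed} so that the coefficients cancel. The only cosmetic difference is that you treat a general homogeneous $P$ via the wedge identity $(v^*dh)_i\wedge\overline{p(v)}=N^{-1}(v^*h)_i\,d\overline{p(v)}$ plus Euler's identity, whereas the paper reduces to monomial $P=e^I$ and uses the logarithmic-derivative form of $dP(v^*h)$; the sign bookkeeping you flag checks out.
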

\begin{proof}
Since $d\overline{Q(v)}=0$, it suffices to assume that $Q=1$ and that $P$ is monomial, say $P=e^I$ for some multi-index $I$ of degree $p$. Then
$P(v^*h)=(v^*h)_1^{i_1} \cdots (v^*h)_N^{i_N}$ and 
$$
dP(v^*h) = \left( \sum_{j} i_j \frac{d(v^*h)_j}{(v^*h)_j} \right)P(v^*h),
$$
and so
\begin{multline*}
dP(v^*h) \wedge \overline{p(v)}  \\
\begin{split}
&= 2 (-1)^{N(N-1)/2} P(v^*h)  \left( \sum_{j} i_j \frac{d(v^*h)_j}{(v^*h)_j} \right) \wedge \left( \sum_{i \geq 1} (-1)^{i-1} (v^*h)_i  (v^* dh)_N \wedge \cdots \wedge \widehat{(v^*dh)_i} \wedge \cdots \wedge (v^* dh)_1 \right) \\
&= 2 (-1)^{N(N-1)/2} P(v^*h) \sum_{j} i_j (-1)^{j-1} (v^*dh)_j \wedge (v^* dh)_N \wedge \cdots \wedge \widehat{(v^*dh)_j} \wedge \cdots \wedge (v^* dh)_1 \\
&= 2 (-1)^{(N+2)(N-1)/2} \left(\sum_j i_j \right) P(v^*h)  (v^* dh)_N \wedge \cdots \wedge  (v^* dh)_1 \\
&= p N^{-1} P(v^*h) d\overline{p(v)},
\end{split}
\end{multline*}
where the last equality follows from \eqref{eq:dp}. Using \eqref{eq:eta_closed_equivalent_form} we compute
\begin{multline*}
d((v^*hv)^{-N-p}P(v^*h)\overline{p(v)}) \\
\begin{split}
&= (v^*hv)^{-N-p-1} \left[-(N+p)d(v^*hv) \wedge P(v^*h) \overline{p(v)}  + (v^*hv)dP(v^*h) \wedge \overline{p(v)} + (v^*hv)P(v^*h)d\overline{p(v)} \ \right] \\
&= (v^*hv)^{-N-p} \left[-(N+p)N^{-1}+pN^{-1}+1 \right]P(v^*h)d\overline{p(v)} \\
&= 0.
\end{split}
\end{multline*}
\end{proof}

\subsection{Mellin transform} 
We define $\eta(v,s)$ to be the Mellin transform of $\psi(v)$; that is, for holomorphic polynomials $P$ and $Q$ define
\begin{equation} \label{def:eta_Mellin_transform}
\eta(v,P \otimes \overline{Q},s) = \int_0^\infty \psi(tv,P \otimes \overline{Q}) t^{s+N+p-q} \frac{dt}{t}.
\end{equation}
Then 
\begin{equation} \label{eq:eta_invariance}
g^* \eta(gv,gP \otimes \overline{gQ},s) = \eta(v,P \otimes \overline{Q},s), \quad g \in G
\end{equation}
because $\psi$ is $G$-invariant. Since $\overline{p(tv,P,Q)} = t^{N+p+q} \overline{p(v,P,Q)}$, we have
\begin{equation} \label{eq:eta_explicit}
\begin{split}
\eta(v,P \otimes \overline{Q},s) &=  \int_0^\infty e^{-t^2v^* h v} t^{s+2N+2p} \frac{dt}{t} \overline{p(v,P,Q)} \\
&=  2^{-1}\Gamma(N+p+s/2) (v^*hv)^{-s/2-N-p}\overline{p(v,P,Q)}.
\end{split}
\end{equation}

\begin{lemma} \label{lemma:d_eta_explicit}
We have
\[
d\eta(v,P \otimes \overline{Q},s) = c(s) (v^*hv)^{-s/2-N-p} \overline{Q(v)}P(v^*h)d\overline{p(v)},
\]
where $c(s)=(-4N)^{-1}s\Gamma(N+p+s/2)$.
\end{lemma}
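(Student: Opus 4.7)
The plan is to differentiate the explicit formula \eqref{eq:eta_explicit} for $\eta$, exploiting the closedness result of \autoref{lemma:p_closed_coeffs} to avoid computing most of the derivatives directly.

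First, write
\[
\eta(v,P \otimes \overline{Q},s) = 2^{-1}\Gamma(N+p+s/2) (v^*hv)^{-s/2} \cdot \bigl[(v^*hv)^{-N-p}\overline{p(v,P,Q)}\bigr].
\]
By \autoref{lemma:p_closed_coeffs}, the bracketed factor is a closed form on $X$, so Leibniz gives
\[
d\eta(v,P \otimes \overline{Q},s) = 2^{-1}\Gamma(N+p+s/2) \, d\bigl((v^*hv)^{-s/2}\bigr) \wedge (v^*hv)^{-N-p}\overline{p(v,P,Q)}.
\]
Computing $d((v^*hv)^{-s/2}) = -\tfrac{s}{2}(v^*hv)^{-s/2-1}\, d(v^*hv)$ and using $\overline{p(v,P,Q)} = \overline{Q(v)}P(v^*h)\overline{p(v)}$ (where $\overline{Q(v)}$ is $d$-closed), this reduces the problem to an identity of the form
\[
d(v^*hv) \wedge P(v^*h)\overline{p(v)} = \tfrac{1}{N}(v^*hv)\, P(v^*h)\, d\overline{p(v)}.
\]

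Next I would establish this identity. It is exactly what drops out of the calculation in the proof of \autoref{lemma:p_closed_coeffs}: the three-term relation
\[
-(N+p)\, d(v^*hv)\wedge P(v^*h)\overline{p(v)} + (v^*hv)\, dP(v^*h)\wedge \overline{p(v)} + (v^*hv)\, P(v^*h)\, d\overline{p(v)} = 0,
\]
combined with the auxiliary identity $dP(v^*h)\wedge \overline{p(v)} = pN^{-1} P(v^*h)\, d\overline{p(v)}$ established in loc.\ cit., yields $(N+p)\, d(v^*hv)\wedge P(v^*h)\overline{p(v)} = \tfrac{N+p}{N}(v^*hv)P(v^*h)\, d\overline{p(v)}$, i.e.\ the desired identity after dividing by $N+p$.

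Combining the two steps gives
\[
d\eta(v,P\otimes\overline{Q},s) = -\tfrac{s}{4N}\,\Gamma(N+p+s/2)\,(v^*hv)^{-s/2-N-p}\,\overline{Q(v)}P(v^*h)\, d\overline{p(v)},
\]
which is the claimed formula with $c(s)=(-4N)^{-1}s\,\Gamma(N+p+s/2)$. There is no real obstacle here: the closedness lemma does all the work, and the only computation is the short algebraic manipulation of the three-term relation to extract the constant $1/N$.
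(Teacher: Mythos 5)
Your proof is correct and follows essentially the same route as the paper: differentiate the explicit formula \eqref{eq:eta_explicit}, use the closedness from \autoref{lemma:p_closed_coeffs} together with Leibniz so that only the derivative of $(v^*hv)^{-s/2}$ survives, and then apply the identity $d(v^*hv)\wedge P(v^*h)\overline{p(v)}=N^{-1}(v^*hv)P(v^*h)d\overline{p(v)}$. The only cosmetic difference is that you extract this identity by recombining the three-term relation with $dP(v^*h)\wedge\overline{p(v)}=pN^{-1}P(v^*h)d\overline{p(v)}$, whereas the paper obtains it directly by multiplying \eqref{eq:eta_closed_equivalent_form} by the scalar function $\overline{Q(v)}P(v^*h)$; both give the same constant $c(s)$.
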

\begin{proof}
By \autoref{lemma:p_closed_coeffs} we have $d((v^*hv)^{-N-p} \overline{p(v,P,Q)})=0$. Using \eqref{eq:eta_closed_equivalent_form}, we compute
\begin{equation*}
\begin{split}
2 \Gamma(N+p+s/2)^{-1} & d\eta(v,P \otimes \overline{Q},s) \\ &= d((v^*hv)^{-s/2} (v^*hv)^{-N-p} \overline{p(v,P,Q)}) \\
&= -2^{-1}s (v^*hv)^{-1-s/2} d(v^*h v) \wedge (v^*hv)^{-N-p} \overline{p(v,P,Q)} \\
&= -(2N)^{-1} s (v^*hv)^{-s/2-N-p} \overline{Q(v)}P(v^*h)d\overline{p(v)}.
\end{split}
\end{equation*}
\end{proof}

Using \autoref{lemma:d_eta_explicit} we can represent the form $d\eta(v,P\otimes \overline{Q},s)$ as a Mellin transform: we define 
$$
\phi \in A^N(X;\mathcal{S}(V) \otimes V^{p,q})^G
$$
by
\begin{equation} \label{eq:def_varphi}
\phi(v,P \otimes \overline{Q}) = e^{-v^*hv} \overline{Q(v)} P(v^*h)d\overline{p(v)};
\end{equation} 
then  the above lemma implies that
\begin{equation} \label{eq:d_eta_Mellin}
d\eta(v,P\otimes \overline{Q},s) =-\frac{s}{2N} \int_0^\infty \phi(tv,P \otimes \overline{Q}) t^{s+N+p-q} \frac{dt}{t}.
\end{equation}
For further reference we note the homogeneity property (which follows from \eqref{eq:eta_explicit}):
\begin{equation} \label{eta_homogeneity}
\eta(zv,P(z\cdot) \otimes \overline{Q(z^{-1} \cdot)},s) = |z|^{-s} z^{-N} \eta(v,P \otimes \overline{Q},s)
\end{equation}
for $z \in \mathbb{C}^\times$.

\subsection{Example: the case $N=2$}
We compute the form $\psi^{0,0}(v)$ when $N=2$. We have
\begin{equation}
\begin{split}
\psi^{0,0}(v) &= -2 e^{-v^*hv} (\overline{(hv)}_1 (\overline{dhv})_2-\overline{(hv)}_2 (\overline{dhv})_1) \\
&= - 2 e^{-v^*hv} (\omega_{11}\overline{v_1}^2 +\omega_{12}\overline{v_1 v_2} + \omega_{22} \overline{v_2}^2), 
\end{split}
\end{equation}
with
\begin{equation}
\begin{split}
\omega_{11}&=\overline{h_{11}}d\overline{h_{21}}-\overline{h_{21}}d{\overline{h_{11}}}=h_{11}dh_{12}-h_{12}dh_{11}, \\
\omega_{12}&=h_{11}dh_{22}-h_{12}dh_{21}+h_{21}dh_{12}-h_{22}dh_{11}, \\
\omega_{22}&=h_{21}dh_{22}-h_{22}dh_{21}.
\end{split}
\end{equation}
Let us rewrite the expression in classical coordinates. For $\tau=(z,y) \in \mathbb{H}^3=\mathbb{C} \times \mathbb{R}_{>0}$, write $g_\tau=\left(\begin{smallmatrix} y^{1/2} & zy^{-1/2} \\ 0 & y^{-1/2}\end{smallmatrix}\right)$. The map $\tau \mapsto g_{\tau}K$ identifies $\mathbb{H}^3$ with $X=\mathrm{SL}_2(\mathbb{C})/\mathrm{SU}(2)$. In these coordinates we have
\[
h_\tau = {^t}\overline{g_{\tau}}^{-1}g_\tau^{-1}  = \begin{pmatrix} y^{-1/2} & 0 \\ -\overline{z} y^{-1/2} & y^{1/2} \end{pmatrix} \begin{pmatrix} y^{-1/2} & -zy^{-1/2}\\ 0 & y^{1/2} \end{pmatrix} = y^{-1} \begin{pmatrix} 1 & -z \\ -\overline{z} & y^2+|z|^2 \end{pmatrix},
\]
and
\[
v^*h_\tau v = |g_\tau^{-1}v|^2 = y^{-1}(|v_1-zv_2|^2+|yv_2|^2).
\]
Hence
\begin{equation}
\begin{split}
dh_\tau &= -y^{-2}dy \begin{pmatrix} 1 & -z \\ -\overline{z} & y^2+|z|^2 \end{pmatrix}+ y^{-1} d\begin{pmatrix} 1 & -z \\ -\overline{z} & y^2+|z|^2 \end{pmatrix} \\
&= -h_\tau y^{-1}dy + y^{-1} \begin{pmatrix} 0 & -dz \\ -d\overline{z} & 2ydy+zd\overline{z}+\overline{z}dz \end{pmatrix} 
\end{split}
\end{equation}
and we compute
\begin{equation}
\begin{split}
\omega_{11} &=  y^{-2}dz, \\
\omega_{12} &= -2(y^{-1}dy +y^{-2}\overline{z}dz), \\
\omega_{22} &= 2\overline{z}y^{-1}dy-d\overline{z}+\overline{z}^2 y^{-2}dz.
\end{split}
\end{equation}

Writing $\psi^{0,0}(v)=\psi(v)_y dy+\psi(v)_z dz+\psi(v)_{\overline{z}} d\overline{z}$ we obtain
\begin{equation}
\begin{split}
\psi(v)_{\overline{z}} &= 2 e^{-v^*h_\tau v} \cdot \overline{v_2}^2 \\
\psi(v)_{y} &= -2 e^{-v^*h_\tau v} (-2y^{-1}\overline{v_1v_2}+2\overline{z}y^{-1}\overline{v_2}^2) \\
&= 4y^{-1}e^{-v^*h_\tau v}\overline{(v_1-zv_2)v_2} \\
\psi(v)_{z} &= -2 y^{-2}e^{-v^*h_\tau v} (\overline{v_1}^2-2\overline{z} \overline{v_1v_2}+\overline{z}^2\overline{v_2}^2) \\
&= -2y^{-2}e^{-v^*h_\tau v} \overline{(v_1-zv_2)}^2.
\end{split}
\end{equation}
The Mellin transform $\eta^{0,0}(v,s)=\eta(v,s)_y dy+\eta(v,s)_z dz+\eta(v,s)_{\overline{z}} d\overline{z}$ (defined in \eqref{def:eta_Mellin_transform} below) is then given by:
\begin{equation} \label{eq:eta_explicit_formula_N_equals_2}
\begin{split}
\eta(v,s)_{\overline{z}} &= \Gamma(s/2+2)y^{s/2} \frac{(\overline{yv_2})^2}{(|v_1-zv_2|^2+|yv_2|^2)^{s/2+2}} \\
\eta(v,s)_{y} &= 2\Gamma(s/2+2)y^{s/2} \frac{\overline{(v_1-zv_2)yv_2}}{(|v_1-zv_2|^2+|yv_2|^2)^{s/2+2}} \\
\eta(v,s)_{z} &= - \Gamma(s/2+2)y^{s/2} \frac{\overline{(v_1-zv_2)}^2}{(|v_1-zv_2|^2+|yv_2|^2)^{s/2+2}}
\end{split}
\end{equation}
Thus we recover the form introduced by Ito in \cite{Ito}.

\subsection{Fourier transform} Recall that the Cartan decomposition $\mathfrak{g}=\mathfrak{p} \oplus \mathfrak{k}$ identifies the tangent space $T_{eK} X$ at the point $eK \in X$ with $\mathfrak{p}$. Given $Y \in \wedge^{N-1} \mathfrak{p}$ and polynomials $P$ and $Q$, evaluation at $Y$ defines a Schwartz function
$$
\psi(Y,P \otimes \overline{Q}) \in \mathcal{S}(V)
$$
given explicitly by
$$
\psi(v,Y;P\otimes \overline{Q}) = e^{-v^*v}\overline{p(v,Y;P,Q)},
$$
with $\overline{p(v,Y;P,Q)}=\overline{Q(v)}P(v^*)\overline{p(v,Y)}$. 

We write $\langle \cdot, \cdot \rangle$ for the scalar product on $V$ defined by $\langle v,w \rangle=2 \mathrm{Re}(w^*v)$ and given a Schwartz form $f \in \mathcal{S}(V)$, we define its Fourier transform $\mathcal{F}f \in \mathcal{S}(V)$ by
$$
\mathcal{F}f(v)=\int_V f(w)e^{2\pi i \langle v,w \rangle} dw, 
$$
where $dw$ denotes the Lebesgue measure on $\mathbb{C}^N$.  Since the polynomial $\overline{p(v,Y;P,Q)}$ is anti-holomorphic, it is also harmonic and hence we have
\begin{equation} \label{eq:psi_Four_transform}
\mathcal{F}\psi(Y,P \otimes \overline{Q}) = C \psi(Y,P \otimes \overline{Q})
\end{equation}
for some constant $C$ satisfying $C^4 =1$. In particular, $\mathcal{F}\psi(0,Y;P \otimes \overline{Q})=\psi(0,Y;P \otimes \overline{Q})=0$. Similar statements hold for $\phi(Y,P\otimes \overline{Q})$ for any $Y \in \wedge^N \mathfrak{p}$.

\subsection{Integral on a maximal torus} \label{subsection:Integral_on_max_torus} Let $T \subset G$ be the torus of diagonal matrices. The inclusion of $T$ in $G$ induces an embedding
$$
T/T \cap K \rightarrow X
$$
identifying $T/T \cap K$ with the submanifold of $X$ consisting of diagonal hermitian matrices. This submanifold is diffeomorphic to $\mathbb{R}_{>0}^{N-1}$: writing
\begin{equation} \label{eq:C_manifold}
C=\left\{ (t_1,\ldots,t_N) \in \mathbb{R}_{>0}^N \ | \ t_1\cdots t_N=1 \right\},
\end{equation}
the map $(t_1,\ldots,t_N) \mapsto \mathrm{diag}(t_1^{-1},\ldots, t_N^{-1})T \cap K$ identifies $C \simeq T/T\cap K$. We use this identification to orient $T/T \cap K$ as follows: forgetting the coordinate $t_N$ gives a diffeomorphism $C \simeq \mathbb{R}_{>0}^{N-1}$. We orient $C$, and hence $T/T\cap K$, by pulling back the standard orientation of $\mathbb{R}_{>0}^{N-1}$ (given by the volume form $\tfrac{dt_1}{t_1} \wedge \cdots \wedge \tfrac{dt_{N-1}}{t_{N-1}}$).

\begin{lemma} \label{lemma:eta_integration} Let $v \neq 0$. If $\mathrm{Re}(s)+2N+2p>0$, then the form $\eta(v,P \otimes \overline{Q},s)$ is integrable on $T/T\cap K$. For $P=e^I$ and $Q=z^J$ monomial with $I=(i_1,\ldots,i_N)$ and $J=(j_1,\ldots,j_N)$ multi-indices, we have
$$
\int_{T/T\cap K} \eta(v,P \otimes \overline{Q},s) =  \prod_{k=1}^N \Gamma\left( \frac{s}{2N}+1+i_k \right) \frac{\overline{v_k}^{j_k}}{|v_k|^{s/N}v_k^{i_k+1}}.
$$
\end{lemma}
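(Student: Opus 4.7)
The plan is to restrict $\eta(v,P\otimes \overline{Q},s)$ to the submanifold $T/T\cap K$ of diagonal hermitian matrices, and then to perform two successive changes of variables that reduce the resulting integral to a classical Dirichlet--Gamma computation.

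First I would restrict the form. On $T/T\cap K$ the matrix $h$ is diagonal, $h = \mathrm{diag}(t_1,\ldots,t_N)$ with $t_1\cdots t_N=1$, and $dh_{ij}|_T=0$ for $i \neq j$. Consequently $(v^*h)_i|_T = \overline{v_i}t_i$ and $(v^*dh)_i|_T = \overline{v_i}dt_i$. Substituting into \eqref{eq:def_p} and using $\sum_k dt_k/t_k=0$ (inherited from the constraint $\det h=1$) to eliminate $dt_N$, each of the $N$ summands in $\overline{p(v)}|_T$ collapses to the same multiple of $dt_{N-1}\wedge\cdots\wedge dt_1$, yielding the clean identity
\[
\overline{p(v)}|_T \;=\; 2N\, \prod_{k=1}^N \overline{v_k}\; \cdot \omega,
\]
where $\omega = \tfrac{dt_1}{t_1}\wedge\cdots\wedge\tfrac{dt_{N-1}}{t_{N-1}}$ is the chosen volume form. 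Plugging this into the explicit formula \eqref{eq:eta_explicit} for $\eta$ with $P=z^I, Q=z^J$ gives
\[
\eta|_T \;=\; N\,\Gamma(N+p+s/2)\;\Bigl(\prod_{k} \overline{v_k}^{\,i_k+j_k+1}\Bigr) \cdot \frac{\prod_k t_k^{i_k}}{\bigl(\sum_k t_k |v_k|^2\bigr)^{\alpha}}\,\omega, \quad \alpha := s/2+N+p.
\]

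To extract the $|v_k|$-dependence, I would substitute $\rho_k = t_k|v_k|^2$ (which preserves $\omega$ in multiplicative coordinates) and then rescale $\rho_k = (\prod_j|v_j|^2)^{1/N}\sigma_k$ to land on the standard $C = \{\sigma \in \mathbb{R}_{>0}^N : \prod \sigma_k=1\}$. The $|v_k|$ factors pull out and we are reduced to the universal integral
\[
\mathcal{I} \;=\; \int_C \frac{\prod_k \sigma_k^{i_k}}{\bigl(\sum_k\sigma_k\bigr)^{\alpha}}\omega.
\]
The main step is then the evaluation of $\mathcal{I}$. For this I would apply the Gamma representation $(\sum \sigma_k)^{-\alpha} = \Gamma(\alpha)^{-1}\int_0^\infty \mu^{\alpha-1}e^{-\mu\sum\sigma_k}\,d\mu$ and change variables by $\rho_k = \mu\sigma_k$, converting the integration over $\mathbb{R}_{>0}\times C$ into an integration over the free cone $\mathbb{R}_{>0}^N$. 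A short Jacobian computation in logarithmic coordinates $l_k = \log\rho_k = \log\mu + \log\sigma_k$ shows
\[
\prod_k \tfrac{d\rho_k}{\rho_k} \;=\; N\,\tfrac{d\mu}{\mu}\wedge\omega_\sigma.
\]
Substituting $\mu = (\prod\rho_k)^{1/N}$, the integrand decouples into a product of one-variable integrals $\int_0^\infty \rho_k^{s/(2N)+i_k} e^{-\rho_k}\,d\rho_k$, giving
\[
\mathcal{I} \;=\; \frac{\prod_k \Gamma\bigl(s/(2N)+1+i_k\bigr)}{N\,\Gamma(s/2+N+p)}.
\]

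Assembling the pieces, the factors of $N$ and $\Gamma(s/2+N+p)$ cancel, and simplifying $\overline{v_k}^{i_k+1}/|v_k|^{2(i_k+1)} = v_k^{-(i_k+1)}$ yields the claimed formula. The expected obstacles are bookkeeping: getting the orientation sign correct in the computation of $\overline{p(v)}|_T$ (which involves $N(N-1)/2 + (N-1) + (N-1)(N-2)/2 = N(N-1)$ being even), and similarly tracking signs in the log-coordinate Jacobian. As a sanity check I would verify the $N=2$ case by integrating the explicit form \eqref{eq:eta_explicit_formula_N_equals_2} directly: after the substitution $w = y^2|v_2|^2/|v_1|^2$ the integral reduces to a Beta function $B(s/4+1+i_1, s/4+1+i_2)$, reproducing the two Gamma factors. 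Integrability under $\mathrm{Re}(s)+2N+2p>0$ will follow from the standard convergence criterion for the Mellin--Gamma integral representation used in the final step.
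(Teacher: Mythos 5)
Your proposal is correct and follows essentially the same route as the paper: both restrict to the diagonal torus and then unfold the integral over $C=\{t\in\mathbb{R}_{>0}^N : \prod t_k=1\}$ against a radial variable via the diffeomorphism $C\times\mathbb{R}_{>0}\simeq\mathbb{R}_{>0}^N$, decoupling everything into a product of one-variable Gamma integrals --- your Gamma-integral representation of $\bigl(\sum_k\sigma_k\bigr)^{-\alpha}$ merely reintroduces the Mellin variable that the paper keeps unintegrated from the definition \eqref{def:eta_Mellin_transform} of $\eta$. Your intermediate identity $\overline{p(v)}|_{T/T\cap K}=2N\prod_k\overline{v_k}\,\omega$, the sign count $N(N-1)/2+(N-1)+(N-1)(N-2)/2=N(N-1)$, and the final assembly of constants all check out against the stated formula.
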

\begin{proof}
Since $\eta(v,P \otimes \overline{Q},s)=\overline{Q(v)} \eta(v,P \otimes 1,s)$, we may assume that $Q=1$. In the above coordinates for $C$ we have $h=\mathrm{diag}(t_1^2,\ldots,t_N^2)$ and $dh=2\mathrm{diag}(t_1 dt_1,\ldots,t_N dt_N)$, and so the restriction of $P(v^*h)\overline{p(v)}$ to $T/T \cap K$ is given by
\begin{multline*}
\prod_{j=1}^N (t_j^2 \overline{v_j})^{i_j} \cdot 2 (-1)^{N(N-1)/2} \sum_{j} (-1)^{j-1} \overline{v_j}t_j^2 (\overline{v_N}2t_N dt_N) \wedge \cdots \wedge \widehat{(\overline{v_j}2t_j dt_j)} \wedge \cdots \wedge (\overline{v_1}2t_1 dt_1) \\
= 2^N (-1)^{N(N-1)/2} \prod_{j=1}^N t_j^{2i_j} \overline{v_j}^{i_j+1} \sum_j (-1)^{j-1} \frac{dt_N}{t_N} \wedge \cdots \wedge \widehat{\frac{dt_j}{t_j}} \wedge \cdots \wedge \frac{dt_1}{t_1} \qquad (\text{since } t_1\cdots t_N=1).
\end{multline*}
For $t \in C$ and $u >0$, set $u_i = t_i u$. This gives $u_1 \cdots u_N=u^N$ and $\tfrac{du_i}{u_i}=\tfrac{dt_i}{t_i}+\tfrac{du}{u}$, and hence
\[
\left(\sum_j (-1)^{j-1} \frac{dt_N}{t_N} \wedge \ldots \wedge \widehat{\frac{dt_j}{t_j}} \wedge \ldots \wedge \frac{dt_1}{t_1}\right) \wedge \frac{du}{u}  = \frac{du_N}{u_N} \wedge \ldots \wedge \frac{du_1}{u_1}.
\]
The map $((t_1,\ldots,t_N),u) \mapsto (u_1,\ldots,u_N)$ induces a diffeomorphism $C \times \mathbb{R}_{>0} \simeq \mathbb{R}_{>0}^N$. Using this as change of variables, we compute
\begin{equation*}
\begin{split}
\int_{T/T \cap K} \eta(v,P \otimes 1,s) &= \int_{C} \int_0^\infty \psi(uv,P \otimes 1)u^{s+N+p} \frac{du}{u} \\
&=  2^N (-1)^{N(N-1)/2} \int_{\mathbb{R}_{>0}^N} e^{-\sum u_j^2|v_j|^2}  \left(\prod_{j=1}^N t_j^{2i_j} u^{i_j+1} \overline{v_j}^{i_j+1} \right) u^{s+N+p} \frac{du_N}{u_N} \wedge \ldots \wedge \frac{du_1}{u_1} \\
&= 2^N \prod_{j=1}^N \overline{v_j}^{i_j+1} \int_0^\infty e^{-u_j^2|v_j|^2} u_j^{(s+2N)/N+2i_j} \frac{du_j}{u_j} \\
&=   \prod_{j=1}^N \Gamma\left( \frac{s}{2N}+1+i_j \right) \frac{\overline{v_j}^{i_j+1}}{|v_j|^{s/N+2+2i_j}}.
\end{split}
\end{equation*}
\end{proof}

The above lemma shows that the integral of $\eta(v,P \otimes \overline{Q},s)$ on $T/T \cap K$ has meromorphic continuation to $s \in \mathbb{C}$ that is regular at $s=0$. Its value at $s=0$ for $P=e^I$ and $Q=z^J$ is 
\begin{equation} \label{eq:int_eta_at_0_expression_1}
\left. \int_{T/T \cap K} \eta(v,P \otimes \overline{Q},s) \right|_{s=0} =  \prod_{k=1}^N i_k! \frac{\overline{v_k}^{j_k}}{v_k^{i_k+1}}.
\end{equation}
It follows easily that for arbitrary $P$ we can write
$$
\int_{T/T \cap K} \eta(v,P \otimes \overline{Q},s) = C(s)  \overline{Q(v)} P(-\partial_{z_1},\ldots,-\partial_{z_N}) \left( \prod_{j=1}^N \frac{\overline{v_j}}{|v_j|^{s/N+2}} \right),
$$
for some meromorphic function $C(s)$ such that $C(0)=1$.

\section{Eisenstein cocycle}

Let $k$ be an imaginary quadratic field with ring of integers $\mathcal{O}$. We fix an integer $N \geq 2$ and let $V_k=k^N$ and $G_k=\mathrm{SL}_N(k)$ (recall that $V=\mathbb{C}^N$ and $G=\mathrm{SL}_N(\mathbb{C})$). We also fix an embedding $\sigma:k \rightarrow \mathbb{C}$, which makes $V$ a $k$-module and induces inclusions $V_k \subset V$ and $G_k \subset G$. 

Given a non-zero ideal $\mathfrak{I}$ of $\mathcal{O}$, define 
\begin{equation} \label{eq:def_Lambda_I}
\Lambda(\mathfrak{I}) = \mathfrak{I}^{-1} \oplus \mathcal{O}^{N-1}.
\end{equation}
It is an $\mathcal{O}$-submodule of $k^N$ that we regard as a lattice in $\mathbb{C}^N$ via the embedding $k^N \to \mathbb{C}^N$ induced by $\sigma$. We write $\Gamma(\Lambda(\mathfrak{I}))$ for the intersection of $\mathrm{Aut}_\mathcal{O}(\Lambda(\mathfrak{I}))$ with $\mathrm{SL}_N(k)$; more explicitly
\begin{equation}
\Gamma(\Lambda(\mathfrak{I}))= \left\{ \left. \begin{pmatrix} a & {^t}b \\ c & D \end{pmatrix} \in \mathrm{SL}_N(k)\right| a \in \mathcal{O}, D \in M_{N-1}(\mathcal{O}), b \in (\mathfrak{I}^{-1})^{N-1}, c \in \mathfrak{I}^{N-1} \right\}.
\end{equation}

Let $\mathfrak{p} \subset \mathcal{O}$ be a prime ideal coprime to $\mathfrak{I}$. We define a congruence subgroup $\Gamma_0(\mathfrak{p},\Lambda(\mathfrak{I}))$ of $\Gamma(\Lambda(\mathfrak{I}))$ by
\begin{equation}
\Gamma_0(\mathfrak{p},\Lambda(\mathfrak{I}))= \left\{ \left. \begin{pmatrix} a & {^t}b \\ c & D \end{pmatrix} \in \Gamma(\Lambda(\mathfrak{I})) \ \right| \ c \in (\mathfrak{p}\mathfrak{I})^{N-1} \right\};
\end{equation}
thus $\Gamma_0(\mathfrak{p},\Lambda(\mathfrak{I})) = \Gamma(\Lambda(\mathfrak{pI})) \cap \Gamma(\Lambda(\mathfrak{I}))$. When $\mathfrak{I}=\mathcal{O}$ we have $\Gamma(\mathcal{O})=\mathrm{SL}_N(\mathcal{O})$ and $\Gamma_0(\mathfrak{p},\Lambda(\mathcal{O})) = \Gamma_0(\mathfrak{p})$ is the standard level $\mathfrak{p}$ subgroup of $\mathrm{SL}_N(\mathcal{O})$.

In this section we prove \autoref{Theorem1Intro}. We first define a more general cocycle 
\begin{equation}
\mathbf{\Phi}^{p,q}_\mathfrak{p} (\Lambda(\mathfrak{I})) : \Gamma_0(\mathfrak{p},\Lambda(\mathfrak{I}))^N \to \mathcal{F} \otimes V^{p,q},
\end{equation}
where $V^{p,q}$ is given in \eqref{eq:Vpq_definition} and $\mathcal{F}$ is a certain space of functions defined on complements of unions of affine hyperplanes in $V$, endowed with a natural action of $\mathrm{SL}_N(k)$. In the last section we will show that its cohomology class is non-trivial by computing its value explicitly on the units of degree $N$ field extensions of $k$.

\subsection{Definition of the cocycle} Let $\mathfrak{I} \subset k$ be a fractional ideal. Then $\sigma(\mathfrak{I}) \subset \mathbb{C}$ is a lattice.
Given a pair of non-negative integers $p,q \in \mathbb{Z}_{\geq 0}$ and $z \in \mathbb{C}$, define the Kronecker--Eisenstein series
\begin{equation}
K^{p,q}(z,\mathfrak{I},s) = p! \sum_{a \in \sigma(\mathfrak{I})}  \frac{\overline{z+a}^q}{(z+a)^{p+1}|z+a|^{s}}, \qquad z \notin \sigma(\mathfrak{I}).
\end{equation}
The sum converges absolutely for $\mathrm{Re}(s)>1+q-p$ and for $z$ in a compact subset of $\mathbb{C}$. The series $K^{p,q}(z,\mathfrak{a},s)$ has an analytic continuation to the whole $s$-plane that is regular at $s=0$, see e.g. \cite{WeilEllipticFunctions,Colmez,ColmezSchneps}.

More generally, for an $\mathcal{O}$-lattice $\Lambda \subset k^N$, 
let $U(\Lambda)$ be the open subset of $\mathbb{C}^N$ obtained by removing all translates of coordinate hyperplanes by $\lambda \in \sigma(\Lambda)$. For $I=(i_1,\ldots,i_N)$ and $J=(j_1,\ldots,j_N)$ in $\mathbb{Z}_{\geq 0}^N$ and $z \in U(\Lambda)$, define
\begin{equation}
K^{I,J}(z,\Lambda,s) = \sum_{\lambda \in \sigma(\Lambda)} \prod_{1 \leq k \leq N} i_k! \frac{\overline{z_k+\lambda_k}^{j_k}}{(z_k+\lambda_k)^{i_k+1}|z_k+\lambda_k|^{s}}.
\end{equation}
The function $K^{I,J}(z,\Lambda,s)$ can be expressed as a homogeneous degree $N$ polynomial of Kronecker--Eisenstein series: pick non-zero fractional ideals $\mathfrak{I}_1,\ldots,\mathfrak{I}_N$ of $k$ such that $\Lambda \supseteq \mathfrak{I}_1 \oplus \cdots \oplus \mathfrak{I}_N$. Then
\begin{equation}
K^{I,J}(z,\Lambda,s) = \sum_{\lambda \in \Lambda / \mathfrak{I}_1 \oplus \cdots \oplus \mathfrak{I}_N}  K^{I,J}(z+\sigma(\lambda),\mathfrak{I}_1 \oplus \cdots \oplus \mathfrak{I}_N,s),
\end{equation}
and
\begin{equation}
K^{I,J}(z,\mathfrak{I}_1 \oplus \cdots \oplus \mathfrak{I}_N,s) = \prod_{1 \leq k \leq N} K^{i_k,j_k}(z_k,\mathfrak{I}_k,s).
\end{equation}
Thus $K^{I,J}(z,\Lambda,s)$ converges absolutely for $\mathrm{Re}(s)>1+\max\{j_k-i_k\}$ and has analytic continuation to all $s\in \mathbb{C}$ that is regular at $s=0$. We set
\begin{equation}
K^{I,J}(z,\Lambda) = K^{I,J}(z,\Lambda,0)
\end{equation}
and define
$$
\mathcal{F} := \mathrm{span}\langle K^{I,J}(\gamma^{-1}z,\Lambda) \ | \  \gamma \in \mathrm{SL}_N(k), \Lambda \text{ an } \mathcal{O}\text{-lattice in } k^N\rangle,
$$
which carries a natural action of $\mathrm{SL}_N(k)$.

\begin{definition} Let $A$ be a matrix in $\mathrm{End}(\Lambda(\mathfrak{I})) \cap \mathrm{GL}_N(k)$. Then $A^{-1}\Lambda(\mathfrak{I}) \supseteq \Lambda(\mathfrak{I})$ and we define the generalized Dedekind sum
\begin{equation*}
\begin{split}
D^{I,J}(z,A,\Lambda(\mathfrak{I})) &= \det A^{-1} K^{I,J}(A^{-1}z,A^{-1}\Lambda(\mathfrak{I})) \\
& = \det A^{-1} \sum_{\lambda \in \Lambda(\mathfrak{I})/A\Lambda(\mathfrak{I})} K^{I,J}(A^{-1}(z+\sigma(\lambda)),\Lambda(\mathfrak{I})).
\end{split}
\end{equation*}
Let $\mathfrak{p}$ be a proper ideal of $\mathcal{O}$ coprime to $\mathfrak{I}$ and $\mathrm{N}\mathfrak{p}$ be its norm. Define
\begin{equation*}
D^{I,J}_{\mathfrak{p}}(z,A,\Lambda(\mathfrak{I})) = D^{I,J}(z,A,\Lambda(\mathfrak{pI}))-\mathrm{N}\mathfrak{p} \cdot D^{I,J}(z,A,\Lambda(\mathfrak{I})).
\end{equation*}
If $A \in \mathrm{End}(\Lambda(\mathfrak{I}))$ but $A$ is not invertible, set
\begin{equation*}
D^{I,J}(z,A,\Lambda(\mathfrak{I})) = D^{I,J}_{\mathfrak{p}}(z,A,\Lambda(\mathfrak{I})) = 0.
\end{equation*}
\end{definition}

For $p, q \in \mathbb{Z}_{\geq 0}$, recall the $G$-representation $V^{p,q}$ introduced in \eqref{eq:Vpq_definition}. A basis of $V^{p,q}$ is given by the vectors
\begin{equation}
e^{I,J} :=(({^t}e_1)^{i_1} \cdots ({^t}e_N)^{i_N}) \otimes (\overline{e_1}^{j_1} \cdots \overline{e_N}^{j_N}),
\end{equation}
where $I,J \in \mathbb{Z}_{\geq 0}^{N}$ satisfy $i_1 + \ldots +i_N = p$ and $j_1 + \ldots + j_N = q$.

Recall that given a group $\Gamma$ and a $\mathbb{Z}[\Gamma]$-module $M$, a map $\alpha : \Gamma^N \to M$ is said to be a homogeneous $(N-1)$-cocycle if it is equivariant, that is,
\begin{equation} \label{eq:cocycle_def_equiv}
\alpha(\gamma \gamma_1,\ldots,\gamma \gamma_N)=\gamma \alpha(\gamma_1,\ldots,\gamma_N), \qquad \gamma,\gamma_1,\ldots,\gamma_N \in \Gamma,
\end{equation}
and satisfies
\begin{equation} \label{eq:cocycle_def_closed}
\sum_{1 \leq i \leq N+1} (-1)^{i-1} \alpha(\gamma_1,\ldots,\gamma_{i-1},\gamma_{i+1},\ldots,\gamma_{N+1}) = 0, \qquad \gamma_1,\ldots,\gamma_{N+1} \in \Gamma.
\end{equation}

\begin{theorem}
Let $\mathfrak{I}, \mathfrak{p} \subseteq \mathcal{O}$ be non-zero coprime ideals of $\mathcal{O}$ and assume that $\mathfrak{p} \neq \mathcal{O}$. Given $\underline{\gamma}=(\gamma_1,\ldots,\gamma_N) \in \Gamma_0(\mathfrak{p},\Lambda(\mathfrak{I}))^N$, let
\begin{equation*}
A(\underline{\gamma}) = (\gamma_1 e_1 | \cdots | \gamma_N e_1) \in M_N(\mathcal{O})
\end{equation*}
be the matrix formed by the first columns of $\gamma_1,\ldots,\gamma_N$. Then $A(\underline{\gamma}) \in \mathrm{End}(\Lambda(\mathfrak{I}))$. For fixed $p,q$ in $\mathbb{Z}_{\geq 0}$, define a map
\begin{equation*}
\mathbf{\Phi}^{p,q}_\mathfrak{p} (\Lambda(\mathfrak{I})):\Gamma_0(\mathfrak{p},\Lambda(\mathfrak{I}))^N \to \mathcal{F} \otimes V^{p,q}
\end{equation*}
by
\begin{equation*}
\mathbf{\Phi}^{p,q}_\mathfrak{p} (z,\underline{\gamma},\Lambda(\mathfrak{I})) = \sum_{\substack{|I|=p \\ |J|=q}} D^{I,J}_{\mathfrak{p}}(z,A(\underline{\gamma}),\Lambda(\mathfrak{I})) \otimes A(\underline{\gamma}) e^{I,J}.
\end{equation*}
Then $\mathbf{\Phi}^{p,q}_\mathfrak{p} (\Lambda(\mathfrak{I}))$ is a homogeneous $(N-1)$-cocycle.
\end{theorem}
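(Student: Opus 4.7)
I would first settle \eqref{eq:cocycle_def_equiv}, which is essentially formal. Since the first-column operation $A(\underline{\gamma})$ is linear, $A(\gamma\underline{\gamma})=\gamma A(\underline{\gamma})$. For $\gamma\in\Gamma_0(\mathfrak{p},\Lambda(\mathfrak{I}))$ both lattices $\Lambda(\mathfrak{I})$ and $\Lambda(\mathfrak{pI})$ are preserved, so directly from the definition
\[
D^{I,J}(z,\gamma A,\Lambda(\mathfrak{I}))=\det A^{-1}K^{I,J}(A^{-1}\gamma^{-1}z,A^{-1}\gamma^{-1}\Lambda(\mathfrak{I}))=D^{I,J}(\gamma^{-1}z,A,\Lambda(\mathfrak{I})),
\]
which is precisely the $\mathcal{F}$-action of $\gamma$ on $D^{I,J}(\cdot,A,\Lambda(\mathfrak{I}))$; the same holds for $D^{I,J}_\mathfrak{p}$. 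Together with $(\gamma A)e^{I,J}=\gamma\cdot(Ae^{I,J})$ and summation over multi-indices, this yields $\mathbf{\Phi}^{p,q}_\mathfrak{p}(\gamma\underline{\gamma},\Lambda(\mathfrak{I}))=\gamma\cdot\mathbf{\Phi}^{p,q}_\mathfrak{p}(\underline{\gamma},\Lambda(\mathfrak{I}))$.

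\textbf{Plan for the cocycle identity.} For \eqref{eq:cocycle_def_closed} the plan is to realize each Dedekind sum as an integral on the symmetric space $X=G/K$ of the equivariant Schwartz form $\eta$ of Section 2.3, and then reduce to Stokes' theorem. Set
\[
E^{I,J}_\mathfrak{p}(z,\Lambda(\mathfrak{I}),s)=\sum_{\lambda\in\Lambda(\mathfrak{pI})}\eta(\sigma(z+\lambda),e^I\otimes\overline{e}^J,s)-\mathrm{N}\mathfrak{p}\sum_{\lambda\in\Lambda(\mathfrak{I})}\eta(\sigma(z+\lambda),e^I\otimes\overline{e}^J,s);
\]
the $\mathfrak{p}$-smoothing kills the polar part and renders the regularized lattice sum analytic around $s=0$. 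Using $G$-invariance \eqref{eq:eta_invariance} combined with the change of variables $v\mapsto Av$ and Lemma \ref{lemma:eta_integration}, I would verify that for $A\in\mathrm{End}(\Lambda(\mathfrak{I}))\cap\mathrm{GL}_N(k)$,
\[
\int_{A\cdot(T/T\cap K)}E^{I,J}_\mathfrak{p}(z,\Lambda(\mathfrak{I}),s)\Big|_{s=0}=c_I\cdot D^{I,J}_\mathfrak{p}(z,A,\Lambda(\mathfrak{I}))
\]
for the non-zero Gamma-factor $c_I=\prod_k i_k!$ extracted from Lemma \ref{lemma:eta_integration}, with the corresponding vanishing when $A$ is singular (the torus orbit then collapses to a lower-dimensional submanifold).

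\textbf{Stokes argument and main obstacle.} Given $N+1$ elements $\gamma_1,\ldots,\gamma_{N+1}$, I would then construct an oriented $N$-chain $\mathcal{C}(\underline{\gamma})\subset X$ whose boundary realizes $\sum_{k}(-1)^{k-1}A_k\cdot(T/T\cap K)$; a natural model is an oriented simplicial join swept by the torus from the rays through the vectors $v_i=\gamma_i e_1$, equivalently the oriented cone decomposition of the positive cone in $\mathbb{R}^N$ associated with $v_1,\ldots,v_{N+1}$. Stokes' theorem then gives
\[
\sum_{k}(-1)^{k-1}\int_{A_k\cdot(T/T\cap K)}E^{I,J}_\mathfrak{p}(z,\Lambda(\mathfrak{I}),s)=\int_{\mathcal{C}(\underline{\gamma})}dE^{I,J}_\mathfrak{p}(z,\Lambda(\mathfrak{I}),s),
\]
and by Lemma \ref{lemma:d_eta_explicit} the integrand on the right carries an explicit factor $-s/2N$. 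Combined with Poisson summation and the vanishing $\mathcal{F}\phi(0,\cdot)=0$ (a consequence of \eqref{eq:psi_Four_transform} and $\phi(0,\cdot,\cdot)=0$), the right-hand side is analytic at $s=0$ and vanishes there; taking $s\to 0$ yields \eqref{eq:cocycle_def_closed}. The hard part will be making this Stokes step fully rigorous: the orbits $A_k\cdot(T/T\cap K)$ are non-compact and escape to the boundary of $X$ along distinct rational parabolics, so one must either work in a Borel--Serre style compactification (and check that boundary contributions cancel) or rely on the Mellin regularization of Section 2.3 (integrating ray-by-ray, taking analytic continuation in $s$, and only then summing the lattice) to justify interchanging $\mathfrak{p}$-smoothing, lattice summation, and Stokes at $s=0$. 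One must also verify directly that singular $A_k$ contribute zero to the oriented sum, in accordance with the convention $D^{I,J}_\mathfrak{p}(z,A_k,\Lambda)=0$.
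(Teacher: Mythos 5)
Your overall strategy coincides with the paper's: the equivariance argument is exactly right, and the plan of realizing $D^{I,J}_\mathfrak{p}$ as the integral of a smoothed Eisenstein $(N-1)$-form over translated torus orbits (via the explicit torus integral of $\eta$ and homogeneity), then filling the alternating sum of orbits by an $N$-chain and applying Stokes together with $dE=-\tfrac{s}{2N}E(\cdot;\phi,\cdot,s)$, is precisely what the paper does. But the step you flag as ``the hard part'' is not a technicality to be deferred --- it is the actual content of the proof, and your proposal leaves it open. Two things are missing. First, the filling chain: the boundary of a naive ``join swept by the torus from the rays through $\gamma_i e_1$'' is not simply $\sum_k(-1)^{k-1}A_k\cdot(T/T\cap K)$; the paper works in the Tits compactification ${_\mathbb{Q}}\overline{X}^T$, builds each modular symbol $\Delta(\underline{\gamma})$ as a cone over a simplex of the rational Tits building with a barycentric vertex, and then fills $\sum_j(-1)^jS_j$ by cones $C_j$ over the $S_j$ from an auxiliary point $x\in X$, the side faces $C(x,\partial S_j)$ cancelling in the alternating sum because $\sum_j(-1)^j\partial S_j=0$. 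Some such construction (Borel--Serre would also do, but must actually be carried out, including orientations and the degenerate case of singular $A_k$, whose simplices land entirely in the Tits boundary) is needed before Stokes can even be stated.

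Second, and more importantly, the analytic justification of Stokes on this non-compact chain is exactly where the $\mathfrak{p}$-smoothing is used, and your proposal misplaces its role: regularity of the unsmoothed Eisenstein series at $s=0$ already holds because $\psi(0)=\mathcal{F}\psi(0)=0$, so the smoothing does not ``kill the polar part.'' What the smoothing buys is the vanishing of the constant terms of $E_\mathfrak{p}$ along every parabolic whose flag begins with a subspace containing some $\gamma e_1$ (the condition $\int_{k_\mathfrak{p}}\phi_\mathfrak{p}(w+xe_1)\,dx=0$), hence rapid decrease of $E_\mathfrak{p}$ on Siegel sets of these ``good cusps'' (the paper's \autoref{thm:good_cusp_rapid_decrease}); one then checks that the filling chain is covered by finitely many such Siegel sets and that $z$ avoids the relevant $\Lambda(\mathfrak{pI})$-translates of the subspaces $\langle\gamma_ie_1\mid i\in I\rangle$. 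Without this decay statement neither the convergence of $\int_{\mathcal{C}(\underline{\gamma})}$ nor the absence of boundary contributions at infinity is established, and your alternative of ``Mellin regularization ray-by-ray'' is not developed enough to substitute for it. (A minor further point: the normalizing constant in your integral formula should be $1$, not $\prod_k i_k!$, since the factorials are already built into $K^{I,J}$; this does not affect the cocycle identity, but it matters for the later evaluation against zeta values.)
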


Note that the first row of the matrix $A(\underline{\gamma})$ in the statement has entries in $\mathcal{O}$ whereas all its other rows have entries in $\mathfrak{I}$. The statement that $A(\underline{\gamma}) \in \mathrm{End}(\Lambda(\mathfrak{I}))$ follows. Note also that
\begin{equation}
A(\gamma \gamma_1,\ldots,\gamma \gamma_N)=\gamma A(\gamma_1,\ldots,\gamma_N), \qquad \gamma,\gamma_1,\ldots,\gamma_N \in \Gamma_0(\mathfrak{p},\Lambda(\mathfrak{I})).
\end{equation}
The equivariance property \eqref{eq:cocycle_def_equiv} of $\mathbf{\Phi}^{p,q}_\mathfrak{p} (\Lambda(\mathfrak{I}))$ follows from this. Thus it remains to show the cocycle property \eqref{eq:cocycle_def_closed}. To prove it we will next define --- as an Eisenstein series --- a closed $\Gamma_0(\mathfrak{p},\Lambda(\mathfrak{I}))$-invariant differential form
\begin{equation}
E_{\mathfrak{p}}(z,\psi^{p,q},\Lambda(\mathfrak{I})) \in A^{N-1}(X) \otimes V^{p,q}
\end{equation}
and $(N-1)$-dimensional submanifolds
\begin{equation}
\Delta(\underline{\gamma}) \subset X, \qquad \underline{\gamma} \in \Gamma_0(\mathfrak{p},\Lambda(\mathfrak{I}))^N,
\end{equation}
such that 
\begin{equation}
\mathbf{\Phi}^{p,q}_\mathfrak{p} (z,\underline{\gamma},\Lambda(\mathfrak{I})) = \int_{\Delta(\underline{\gamma})} E_\mathfrak{p}(z,\psi^{p,q} ,\Lambda(\mathfrak{I})).
\end{equation}
The cocycle property will follow from the fact that for $\gamma_1,\ldots,\gamma_{N+1} \in \Gamma_0(\mathfrak{p},\Lambda(\mathfrak{I}))$ we can find a simplex 
\begin{equation}
\Delta(\gamma_1,\ldots,\gamma_{N+1}) \subset X
\end{equation}
with boundary
\begin{equation}
\partial \Delta(\gamma_1,\ldots,\gamma_{N+1}) = \sum_{1 \leq i \leq N+1} (-1)^{i-1} \Delta(\gamma_1,\ldots,\gamma_{i-1},\gamma_{i+1},\ldots,\gamma_{N+1})
\end{equation}
and such that $E_\mathfrak{p}(z,\psi^{p,q},\Lambda(\mathfrak{I}))$ decreases rapidly on $\Delta(\gamma_1,\ldots,\gamma_{N+1})$ for fixed $z$.

\subsection{Eisenstein series} \label{subsection:Eisenstein_series} For $v \in V$, an $\mathcal{O}$-lattice $\Lambda \subset V_k$ and a holomorphic polynomial $P$ (resp. $Q$) on $V^\vee$ (resp. on $V$), consider the theta series
\begin{equation}
\theta(v,P \otimes \overline{Q}; \psi,\Lambda) := \sum_{\lambda \in \Lambda} \psi(v+\lambda,P \otimes \overline{Q}).
\end{equation}
The series converges rapidly as $\psi(v,P \otimes \overline{Q})$ is rapidly decreasing. By \eqref{eq:psi_invariance_with_coeffs}, we obtain a differential form $\theta(v,P \otimes \overline{Q} ;\psi,\Lambda) \in A^{N-1}(X)$ satisfying
\begin{equation}
\gamma^*\theta(\gamma v,\gamma P \otimes \overline{\gamma Q};\psi,\Lambda) = \theta(v, P \otimes \overline{Q};\psi,\Lambda), \qquad \gamma \in \Gamma(\Lambda) := \mathrm{Aut}_\mathcal{O}(\Lambda) \cap \mathrm{SL}_N(k).
\end{equation}
The Mellin transform of $\theta(v,P \otimes \overline{Q};\psi,\Lambda)$ is the Eisenstein series 
\begin{equation}
\begin{split}
E(v,P \otimes \overline{Q};\psi,\Lambda,s) &:= \int_0^{+\infty} \theta(tv,P \otimes \overline{Q};\psi,t\Lambda) t^{s+N+p-q} \frac{dt}{t} \\
&= \sum_{\lambda \in \Lambda} \eta(v+\lambda,P\otimes \overline{Q},s),
\end{split}
\end{equation}
where $\eta(v,s)$ is given by \eqref{eq:eta_explicit}. Here the sum converges when $\mathrm{Re}(s) \gg 0$ but can be analytically continued to the whole $s$-plane in a standard way using Poisson summation. To do this, consider the scalar product $\langle \cdot, \cdot \rangle$ on $\mathbb{C}^N$ given by 
\begin{equation} \label{eq:inner_product_CN}
\langle v, w \rangle = 2 \mathrm{Re}(v \cdot w^*)
\end{equation}
and define
\begin{equation}
\begin{split}
\Lambda^\vee &= \left\{w \in \mathbb{C}^N \ | \ \langle v, w \rangle \in \mathbb{Z} \quad \forall v \in \Lambda \right\}.
\end{split}
\end{equation}

Given $g \in G$ and a tangent vector $Y \in \mathfrak{p}=T_{eK} X$ we can define a vector $g_*Y \in T_{gK}X$. The invariance property \eqref{eq:psi_invariance_with_coeffs} can be rewritten as
$$
\psi(gv,g_*Y; gP \otimes \overline{g Q}) = \psi(v,Y;P\otimes \overline{Q}).
$$
By \eqref{eq:psi_Four_transform}, $\psi(\cdot, Y; P\otimes \overline{Q}) \in \mathcal{S}(V)$ is an eigenvector for the Fourier transform and so Poisson sumation gives
\begin{equation}
 \begin{split}
 \sum_{\lambda \in \Lambda}  \psi(t(v+\lambda),g_*Y; P \otimes \overline{Q})  &=  \sum_{\lambda \in \Lambda} \psi(tg^{-1}(v+\lambda),Y;g^{-1}(P \otimes \overline{Q})) \\
&=  C \ \mathrm{Vol}(\mathbb{C}^N/\Lambda)^{-1} t^{-2N} \sum_{\lambda \in \Lambda^{\vee}} e^{2\pi i \langle v,\lambda \rangle} \psi(t^{-1}g^*\lambda,Y; g^{-1}(P \otimes \overline{Q})).
\end{split}
\end{equation}
Using this we can write
\begin{multline}
E(v,g_*Y;P \otimes \overline{Q};  \psi,\Lambda,s) \\
\begin{split}
& = \int_0^\infty \theta(tv,g_*Y; P \otimes \overline{Q};\psi,t\Lambda) t^{s+N+p-q} \frac{dt}{t} \\
& = \int_1^\infty  \theta(tv,g_*Y; P \otimes \overline{Q};\psi,t\Lambda) t^{s+N+p-q} \frac{dt}{t} +C \ \mathrm{Vol}(\mathbb{C}^N/\Lambda)^{-1}  
\end{split}
 \\  \cdot \sum_{\lambda \in \Lambda^\vee} e^{2\pi i \langle v,\lambda \rangle}  \int_1^\infty \psi(tg^*\lambda,Y;g^{-1}(P \otimes \overline{Q}))  t^{-s+N+q-p} \frac{dt}{t}.
\end{multline}
The last expression converges for all $s \in \mathbb{C}$ and gives the desired analytic continuation (with no poles since $\psi(0)=\mathcal{F}\psi(0)=0$) of $E(v,P \otimes \overline{Q};\psi,\Lambda,s)$. We set
\begin{equation}
E(v,P \otimes \overline{Q};\psi,\Lambda)=E(v,P \otimes \overline{Q};\psi,\Lambda,0) \in A^{N-1}(X).
\end{equation}

\begin{proposition}
For fixed $v \in \mathbb{C}^N$ and polynomials $P$ and $Q$, the form $E(v,P \otimes \overline{Q};\psi,\Lambda)$ is closed.
\end{proposition}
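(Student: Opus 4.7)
The plan is to exploit the fact that, by \autoref{lemma:d_eta_explicit}, the exterior derivative $d\eta(v,P\otimes\overline{Q},s)$ carries an explicit factor of $s$. If I can differentiate the Eisenstein series term by term in the domain of absolute convergence, re-express the resulting sum as another Eisenstein series attached to $\phi$, and then analytically continue that new series to $s=0$, the prefactor $s$ will immediately force the derivative to vanish at $s=0$.

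Concretely, for $\mathrm{Re}(s)\gg 0$ the series $\sum_{\lambda\in\Lambda}\eta(v+\lambda,P\otimes\overline{Q},s)$ converges absolutely and uniformly on compact subsets of $X$ (this is how the analytic continuation was set up in \autoref{subsection:Eisenstein_series}). Differentiating term by term and using the Mellin representation \eqref{eq:d_eta_Mellin} gives, after interchanging summation and the $t$-integral,
\[
dE(v,P\otimes\overline{Q};\psi,\Lambda,s) = -\frac{s}{2N}\,\widetilde{E}(v,P\otimes\overline{Q};\phi,\Lambda,s),
\]
where
\[
\widetilde{E}(v,P\otimes\overline{Q};\phi,\Lambda,s) := \int_0^\infty \theta(tv,P\otimes\overline{Q};\phi,t\Lambda)\, t^{s+N+p-q}\frac{dt}{t}
\]
is the Eisenstein series built from $\phi$ in exact analogy with the construction of $E$ in \autoref{subsection:Eisenstein_series}.

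The main step is then the analytic continuation of $\widetilde{E}(v,\cdot;\phi,\Lambda,s)$ to all of $\mathbb{C}$, regular at $s=0$. This should be a verbatim repetition of the Poisson summation argument already carried out for $E$: the essential inputs are that $\phi \in A^N(X;\mathcal{S}(V)\otimes V^{p,q})^G$ is Schwartz-valued in $v$, is an eigenvector of the Fourier transform by \eqref{eq:psi_Four_transform}, and satisfies $\phi(0,Y;P\otimes\overline{Q})=\mathcal{F}\phi(0,Y;P\otimes\overline{Q})=0$ (as noted at the end of Section 2.5). Splitting $\int_0^\infty=\int_1^\infty+\int_0^1$ and applying Poisson summation to the second integral yields an expression that is entire in $s$ with no pole at $s=0$. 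The displayed identity therefore extends meromorphically to all $s$, and evaluating at $s=0$ gives $dE(v,P\otimes\overline{Q};\psi,\Lambda)=0$.

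The only delicate point is the justification of differentiation under the sum and interchange of sum and integral; this is routine given the rapid decay of $\psi$ and $\phi$ in the Schwartz variable, so the real work — really the only nontrivial step — is the Poisson-summation-based analytic continuation of $\widetilde{E}$, which proceeds by direct analogy with the argument for $E$.
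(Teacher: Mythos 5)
Your proposal is correct and follows essentially the same route as the paper: one defines the Eisenstein series $E(v,P\otimes\overline{Q};\phi,\Lambda,s)$ attached to the form $\phi$ of \eqref{eq:def_varphi}, continues it analytically by the same Poisson summation argument (using that $\phi$ is Schwartz-valued, a Fourier eigenvector, and vanishes with its transform at $0$), and concludes from the identity $dE(v,P\otimes\overline{Q};\psi,\Lambda,s)=-\tfrac{s}{2N}E(v,P\otimes\overline{Q};\phi,\Lambda,s)$, coming from \eqref{eq:d_eta_Mellin}, that the derivative vanishes at $s=0$.
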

\begin{proof}
For $t>0$ define the theta series
\[
\theta(tv,P\otimes \overline{Q};\phi,t\Lambda) = \sum_{\lambda \in \Lambda} \phi(t(v+\lambda),P \otimes \overline{Q}),
\]
where $\phi$ is given in \eqref{eq:def_varphi}. The same argument used above shows that
\[
E(v,P \otimes \overline{Q};\phi,\Lambda,s) := \int_0^\infty \theta(tv,P\otimes \overline{Q};\phi,t\Lambda) t^{s+N+p-q} \frac{dt}{t}, \qquad \mathrm{Re} (s) \gg 0,
\]
admits analytic continuation to $s \in \mathbb{C}$ (with no poles). The relation
\begin{equation} \label{eq:d_E_psi_equals_E_varphi}
dE(v,P \otimes \overline{Q};\psi,\Lambda,s) = -\frac{s}{2N} E(v,P\otimes \overline{Q};\phi,\Lambda,s),
\end{equation}
which follows from \eqref{eq:d_eta_Mellin}, proves the claim.
\end{proof}

Thus we may regard $E(\cdot,P \otimes \overline{Q};\psi,\Lambda)$ as a closed differential $(N-1)$-form on $X$ valued on the space of smooth functions $\mathcal{C}^\infty(V)$, and by \eqref{eq:eta_invariance} we have the equivariance property
\begin{equation} \label{eq:Eis_series_invariance_property}
\gamma^*E(\gamma v, \gamma P \otimes \overline{\gamma Q};\psi,\Lambda) = E(v,P \otimes \overline{Q};\psi,\Lambda), \qquad \gamma \in \Gamma(\Lambda).
\end{equation}

For $\Lambda=\Lambda(\mathfrak{I})$ defined in \eqref{eq:def_Lambda_I}, we set
\begin{equation}
E_\mathfrak{p}(v,P \otimes \overline{Q};\psi,\Lambda(\mathfrak{I}),s) = E(v,P \otimes \overline{Q};\psi,\Lambda(\mathfrak{pI}),s)-\mathrm{N}\mathfrak{p} \cdot E(v,P \otimes \overline{Q};\psi,\Lambda(\mathfrak{I}),s)
\end{equation}
and
\begin{equation}
E_\mathfrak{p}(v,P \otimes \overline{Q};\psi,\Lambda(\mathfrak{I})) = E_\mathfrak{p}(v,P \otimes \overline{Q};\psi,\Lambda(\mathfrak{I}),0).
\end{equation}
Again we regard $E_\mathfrak{p}(\cdot,P \otimes \overline{Q};\psi,\Lambda(\mathfrak{I}))$ as a closed differential $(N-1)$-form  on $X$ valued in $\mathcal{C}^\infty(V)$, equivariant under $\Gamma_0(\mathfrak{p},\Lambda(\mathfrak{I}))(=\Gamma(\Lambda(\mathfrak{pI})) \cap \Gamma(\Lambda(\mathfrak{I})) )$.

\subsection{Behaviour on Siegel sets} Fix two coprime ideals $\mathfrak{p}$ and $\mathfrak{I}$ of $\mathcal{O}$ with $\mathfrak{p}$ of prime norm. Recall that proper rational parabolics of $G_k=\mathrm{SL}_N(k)$ are in bijection with proper flags
\begin{equation}
W_\bullet: 0 \subsetneq W_0 \subsetneq \cdots \subsetneq W_r \subsetneq k^N, \quad r \geq 0.
\end{equation}

Before stating our next result we recall the definition of Siegel sets. For a strictly increasing sequence  $\mathbf{J}=\{j_1 < \cdots < j_r\}$ of integers in $\{1,\ldots,N-1\}$, let $W_{j_k}=\langle e_1,\ldots, e_{j_k} \rangle$ and  $P_{\mathbf{J}}$ be the standard parabolic of $\mathrm{SL}_N(k)$ stabilizing the flag
$$
W_\mathbf{J}: 0 \subsetneq W_{j_1} \subsetneq \cdots \subsetneq W_{j_r}  \subsetneq V.
$$
We can write $P_{\mathbf{J}}=NMA$, where (setting $j_0=0$ and $j_{r+1}=N$)
\begin{equation} \label{eq:Langlands_decomposition_standard_parabolic}
\begin{split}
N &= N_\mathbf{J} = \left\{ \begin{pmatrix} 1_{j_1} & * & \cdots & *  \\ 0 & 1_{j_2 - j_1} & \cdots & * \\ 0 & 0 & \ddots & * \\ 0 & 0 & 0 & 1_{j_{r+1} - j_r} \end{pmatrix}\right\} \\
M &= M_\mathbf{J} =   \left\{ \left. \begin{pmatrix} A_1 & 0 & \cdots & 0 \\ 0 & A_2 & \cdots & 0 \\ 0 & 0 & \ddots & 0 \\ 0 & 0 & 0 & A_{r+1} \end{pmatrix} \ \right| \ A_k \in \mathrm{GL}_{j_k - j_{k-1}}(\mathbb{C}), \ |\det(A_k)|=1 \right\} \\ 
A &= A_\mathbf{J} = \left\{ \left. a(t_1,\ldots,t_{r+1}):=\begin{pmatrix} t_1 1_{j_1} & 0 & \cdots & 0 \\ 0 & t_2 1_{j_2 - j_1} & \cdots & 0 \\ 0 & 0 & \ddots & 0 \\ 0 & 0 & 0 & t_{r+1} 1_{j_{r+1} - j_r} \end{pmatrix} \ \right| \ t_k>0, \ \det a(t_1,\ldots,t_{r+1}) = 1 \right\} .
\end{split}
\end{equation}
An element $g \in G$ can be written as
\[
g = n m a k, \quad n \in N, \ m \in M, \ a \in A, \ k \in \mathrm{SU}(N).
\]
In this decomposition $n$ and $a$ are uniquely determined by $g$ and $m$ and $k$ are determined up to an element of $M \cap \mathrm{SU}(N)$. 

For $t \in \mathbb{R}_{>0}$, let
\[
A_t = \{a(t_1,\ldots,t_{r+1}) \in A \ | \ t_k/t_{k+1}  \geq t \text{ for all } k \}.
\]
The Siegel set determined by $t>0$ and a relatively compact set $\omega \subset NM$ is
\begin{equation} \label{eq:std_Siegel_def_1}
S(t,\omega) := \omega A_t \cdot \mathrm{SU}(N) \subset \mathrm{SL}_N(\mathbb{C});
\end{equation}
we refer to its image in $X$ also as a Siegel set.

More generally, suppose that $W_\bullet$ is a proper flag of $k^N$. A {\it Siegel set for the cusp defined by $W_\bullet$} is a set of the form
\[
S(g,t,\omega) := g^{-1} \omega A_t \cdot  \mathrm{SU}(N)
\]
where $g \in \mathrm{SL}_N(k)$ is such that $gW_\bullet$ is a standard flag (i.e. of the form $W_\mathbf{J}$ for some $\mathbf{J}$). 

We say that $W_\bullet$ {\it defines a good cusp} if  $\gamma e_1 \in W_0$ for some $\gamma \in \Gamma_0(\mathfrak{p},\Lambda(\mathfrak{I}))$.

\begin{proposition}\label{thm:good_cusp_rapid_decrease} Suppose that $W_\bullet$ defines a good cusp. If $v \in k^N$ satisfies $(v+\Lambda(\mathfrak{pI})) \cap W_r =\emptyset$, then $E_\mathfrak{p}(v, P \otimes \overline{Q};
\psi,\Lambda(\mathfrak{I}))$ is rapidly decreasing on every Siegel set for the cusp defined by $W_\bullet$.
\end{proposition}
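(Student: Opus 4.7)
The plan is to reduce to the case of a standard flag, decompose the theta series defining $E$ as a double sum over cosets of $\Lambda_W := \Lambda \cap W_r$ and then over $\Lambda_W$, apply Poisson summation to the inner sum, and observe that the $\mathfrak{p}$-smoothing is designed precisely to kill the $\mu^\vee = 0$ ``constant term'' piece, while the non-zero Fourier modes decay rapidly on the Siegel set.

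First I would use the good cusp hypothesis together with the equivariance of $E_\mathfrak{p}$ under $\Gamma_0(\mathfrak{p},\Lambda(\mathfrak{I}))$ (which follows from \eqref{eq:Eis_series_invariance_property}) to reduce to the case in which $W_\bullet = W_\mathbf{J}$ is a standard flag with $e_1 \in W_0 = W_{j_1}$, and the Siegel set is $\omega A_t K$; the hypothesis $(v + \Lambda(\mathfrak{pI})) \cap W_r = \emptyset$ is preserved under this reduction. Set $\Lambda_W := \Lambda(\mathfrak{I}) \cap W_r = \mathfrak{I}^{-1} \oplus \mathcal{O}^{j_r - 1}$ and $\Lambda_W' := \Lambda(\mathfrak{pI}) \cap W_r = \mathfrak{p}^{-1}\mathfrak{I}^{-1} \oplus \mathcal{O}^{j_r - 1}$: then $[\Lambda_W' : \Lambda_W] = \mathrm{N}\mathfrak{p}$, while projection to $V/W_r$ gives a canonical identification $\Lambda(\mathfrak{I})/\Lambda_W \cong \Lambda(\mathfrak{pI})/\Lambda_W' \cong \mathcal{O}^{N - j_r}$. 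In the Mellin--theta representation of \autoref{subsection:Eisenstein_series} I would split each theta series as a double sum, first over a system of coset representatives $\lambda' \in \mathcal{O}^{N - j_r}$ and then over $\mu \in \Lambda_W$ (resp.\ $\mu \in \Lambda_W'$), and apply Poisson summation to the inner sum in the $W_r$-direction, using the Fourier-invariance \eqref{eq:psi_Four_transform} of $\psi$. The hypothesis ensures that the projection of $v + \lambda$ to $V/W_r$ is non-zero for every $\lambda \in \Lambda(\mathfrak{pI})$, so no term $\eta(v+\lambda,\ldots)$ has a pole on the Siegel set (where the metric $h$ degenerates precisely along $W_r$), justifying the rearrangements.

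After Poisson summation each coset contribution splits into a $\mu^\vee = 0$ ``constant term'' (proportional to $\mathrm{Vol}(W_r/\Lambda_W)^{-1}$, respectively $\mathrm{Vol}(W_r/\Lambda_W')^{-1} = \mathrm{N}\mathfrak{p} \cdot \mathrm{Vol}(W_r/\Lambda_W)^{-1}$) plus a sum over non-zero Fourier modes. The $\mu^\vee = 0$ integrand is independent of the ambient lattice and depends on $\lambda'$ only through its class in $\mathcal{O}^{N - j_r}$; under the canonical coset identification these summands therefore match, and the constant-term contribution to $E(\cdot, \Lambda(\mathfrak{pI}))$ equals exactly $\mathrm{N}\mathfrak{p}$ times that of $E(\cdot, \Lambda(\mathfrak{I}))$. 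Hence it cancels in $E_\mathfrak{p} = E(\cdot, \Lambda(\mathfrak{pI})) - \mathrm{N}\mathfrak{p}\,E(\cdot, \Lambda(\mathfrak{I}))$. For the non-zero Fourier modes, on $A_t$ the matrix $a$ has large eigenvalues along $W_r$, so $a^* \mu^\vee$ has large norm for $\mu^\vee \in \Lambda_W^\vee \setminus \{0\}$, and the Gaussian factor in $\psi$ produces rapid decay uniformly in the compact set $\omega$. Combined with the vanishing of the constant term, this yields the asserted rapid decrease of $E_\mathfrak{p}$.

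The main technical obstacle is the uniform control of the Poisson-dual expansion as $a \to \infty$ in $A_t$ that is needed to justify the interchange of the Mellin integral, the coset sum, and the sum over $\mu^\vee$. The hypothesis $(v + \Lambda(\mathfrak{pI})) \cap W_r = \emptyset$ is essential at this point: without it some $\eta(v + \lambda,\ldots)$ develops a singularity at the cusp along the degenerate direction $W_r$ and the rearrangement above breaks down.
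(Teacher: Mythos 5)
Your cancellation mechanism is the right one, and it is essentially the classical counterpart of what the paper does adelically (there, the constant term along the parabolic vanishes because the $\mathfrak{p}$-local average of the smoothed Schwartz function along the line $\langle \gamma e_1\rangle\subseteq W_0$ is zero). But there is a genuine gap in your decay analysis: the assertion that ``on $A_t$ the matrix $a$ has large eigenvalues along $W_r$, so $a^*\mu^\vee$ has large norm for $\mu^\vee \in \Lambda_W^\vee\setminus\{0\}$'' is false when the parabolic is not maximal. Membership in $A_t$ only bounds the successive ratios $t_k/t_{k+1}$ from below; the inner eigenvalues along $W_r$ can tend to $0$ inside the Siegel set. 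For instance, for $N=3$ and the full flag $W_0=\langle e_1\rangle \subsetneq W_1=\langle e_1,e_2\rangle$, the points $a=\mathrm{diag}(t/\epsilon^2,\epsilon,\epsilon/t)$ lie in $A_t$ for all small $\epsilon$, and then dual vectors $\mu^\vee$ supported in the $e_2$-direction of $W_r=W_1$ satisfy $|a^*\mu^\vee|\to 0$, so the ``non-zero Fourier modes'' after Poisson summation along $W_r$ do not decay there at all. In that region the growth of $E$ is governed by the constant term along the maximal parabolic stabilizing $W_0$, which your argument never touches: cancelling only the $\mu^\vee=0$ mode along $W_r$ cannot give rapid decrease on the whole Siegel set of a non-maximal $\mathbf{P}$. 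What is needed is control of the constant terms along \emph{every} parabolic containing $\mathbf{P}$, i.e. your Poisson/cancellation argument must be run along each subspace $W_i$ of the flag, not just $W_r$. This is exactly how the paper proceeds: it invokes the standard criterion for automorphic forms (rapid decay on a Siegel set follows from vanishing of the relevant constant terms) and transitivity of constant terms to reduce to maximal parabolics, where your mechanism does apply, since the good-cusp hypothesis gives $\gamma e_1\in W_0\subseteq W_i$ and the hypothesis on $v$ gives $(v+\Lambda(\mathfrak{pI}))\cap W_i=\emptyset$ for every member $W_i$ of the flag.

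Two smaller points. First, your reduction to ``a standard flag with $e_1\in W_0$ and the lattices $\Lambda(\mathfrak{I}),\Lambda(\mathfrak{pI})$'' is not available as stated: $\Gamma_0(\mathfrak{p},\Lambda(\mathfrak{I}))$-equivariance lets you arrange $e_1\in W_0$, but the further element of $\mathrm{SL}_N(k)$ that standardizes the flag changes the lattices. This is harmless provided you only use what survives, namely that $\Lambda(\mathfrak{pI})\supset\Lambda(\mathfrak{I})$ has index $\mathrm{N}\mathfrak{p}$ with quotient generated by a vector lying in $W_0\subseteq W_r$, which is precisely what makes the two coset decompositions match and the $\mu^\vee=0$ pieces cancel. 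Second, the interchange of Mellin integral, coset sum and Poisson summation that you flag as the main obstacle is not merely technical: $E_\mathfrak{p}$ at $s=0$ is defined by analytic continuation, so the rearrangements must be carried out on the continued object (e.g. via the incomplete-theta splitting $\int_0^1+\int_1^\infty$); the paper sidesteps all of this by working adelically with $E(g,s)$ as an automorphic form of moderate growth and computing the (everywhere convergent) unfolded constant-term integral directly.
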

For the proof it will be convenient to work with adeles. 
Given a finite Schwartz function $\phi_f \in \mathcal{S}(V_k(\mathbb{A}_f))$ and $t >0$, let 
\begin{equation}
\theta(v,t,P \otimes \overline{Q}; \phi_f\otimes \psi) = \sum_{\lambda \in k^N} \phi_f(\lambda) \psi(t(v+\lambda),P \otimes \overline{Q})
\end{equation}
and 
\begin{equation}
\begin{split}
E(v,P \otimes \overline{Q};\phi_f \otimes \psi,s) &= \int_0^\infty \theta(v,t,P \otimes \overline{Q};\phi_f\otimes \psi)t^{s+N+p-q}\frac{dt}{t} \\
&= \sum_{\lambda \in k^N} \phi_f(\lambda) \eta(v+\lambda,P\otimes \overline{Q},s).
\end{split}
\end{equation}
Using Poisson summation as in \autoref{subsection:Eisenstein_series}, for $Y \in \wedge^{N-1} \mathfrak{p}$ we may write  
\begin{multline}
\theta(v,g_*Y,t,  P \otimes \overline{Q};\phi_f\otimes \psi) \\
= C \ \mathrm{Vol}(\mathbb{C}^N/\Lambda(\mathfrak{I}))^{-1} t^{-2N} \sum_{\lambda \in V_k} \widehat{\phi_f}(\lambda) e^{2\pi i \langle v, \lambda \rangle} \psi(t^{-1}g^*\lambda,Y;g^{-1}(P \otimes \overline{Q}))
\end{multline}
and 
\begin{multline} \label{eq:Eisenstein_Poisson}
E(v,g_*Y,P \otimes \overline{Q}; \phi_f \otimes  \psi,s) \\ 
\begin{split}
& = \sum_{\lambda \in V_k} \phi_f(\lambda) \int_1^\infty \psi(tg^{-1}(v+\lambda),Y;g^{-1}(P \otimes \overline{Q})) t^{s+N+p-q}\frac{dt}{t} \\ 
& \quad +C \ \mathrm{Vol}(\mathbb{C}^N/\Lambda(\mathfrak{I}))^{-1}   \sum_{\lambda \in V_k} \widehat{\phi_f}(\lambda) e^{2\pi i \langle v,\lambda \rangle}  \int_1^\infty \psi(tg^*\lambda,Y;g^{-1}(P \otimes \overline{Q}))  t^{-s+N+q-p} \frac{dt}{t},
\end{split}
\end{multline}
showing that $E(v,P \otimes \overline{Q};\phi_f \otimes \psi,s)$ admits analytic continuation to $s \in \mathbb{C}$ that is regular at $s=0$.
Note that we can write
\begin{equation}
E_\mathfrak{p}(v,P \otimes \overline{Q};\psi,\Lambda(\mathfrak{I})) = E(v,P \otimes \overline{Q};\phi_f(\mathfrak{p},\mathfrak{I}) \otimes \psi,s)|_{s=0}
\end{equation}
where $\phi_f(\mathfrak{p},\mathfrak{I}) \in \mathcal{S}(V_k(\mathbb{A}_f))$ is given by
\begin{equation}
\phi_f(\lambda;\mathfrak{p},\mathfrak{I}) = \left\{ \begin{array}{cc} 0, & \text{ if } \lambda \notin \Lambda(\mathfrak{pI}) \otimes_\mathcal{O} \widehat{\mathcal{O}}, \\ 1, & \text{ if } \lambda \in \Lambda(\mathfrak{pI}) \otimes_\mathcal{O} \widehat{\mathcal{O}} \text{ and } \lambda_1 \notin \mathfrak{I}^{-1} \otimes_\mathcal{O} \widehat{\mathcal{O}} \\
1-\mathrm{N}(\mathfrak{p}), & \text{ if } \lambda \in \Lambda(\mathfrak{pI}) \otimes_\mathcal{O} \widehat{\mathcal{O}} \text{ and } \lambda_1 \in \mathfrak{I}^{-1} \otimes_\mathcal{O} \widehat{\mathcal{O}}. \end{array} \right.
\end{equation}

\begin{proof}[Proof of \autoref{thm:good_cusp_rapid_decrease}] 
Fix $Y \in \wedge^{N-1} \mathfrak{p}$, and a vector $v \in k^N$ and define $\tilde{\phi_f} \in \mathcal{S}(V_k(\mathbb{A}_f))$ by $\tilde{\phi_f}(\lambda)=\phi_f(\lambda-v;\mathfrak{p},\mathfrak{I})$. For $g=(g_f,g_\infty) \in \mathrm{SL}_N(\mathbb{A}_k)$, polynomials $P$ and $\overline{Q}$ and $t>0$, define
$$
\theta_{P \otimes \overline{Q}}(g,t) = \sum_{\lambda \in k^N} \tilde{\phi_f}(g_f^{-1}\lambda) \psi(tg_\infty^{-1}\lambda,Y;P \otimes \overline{Q})
$$
and
\begin{equation*}
\begin{split}
E_{P \otimes \overline{Q}}(g,s) &= \int_0^\infty \theta_{P \otimes \overline{Q}}(g,t) t^{s+N+p-q} \frac{dt}{t} \\
&= \sum_{\lambda \in k^N} \tilde{\phi_f}(g_f^{-1}\lambda) \eta(tg_\infty^{-1} \lambda, Y; P \otimes \overline{Q},s).
\end{split}
\end{equation*}
Then our Eisenstein series $E_\mathfrak{p}(v,P \otimes \overline{Q};\psi, \Lambda(\mathfrak{I}))$ satisfies
$$
E_\mathfrak{p}(v,(g_\infty)_*Y; P \otimes \overline{Q};\psi, \Lambda(\mathfrak{I}),s) = E_{g_\infty^{-1}(P \otimes \overline{Q})}((g_f=1,g_\infty),s).
$$
Since $E_{P \otimes \overline{Q}}$ is linear in $P$ and $Q$ and $V^{p,q}$ is a polynomial representation of $G$, the expression $g_\infty^{-1}(P \otimes \overline{Q})$ grows at most polynomially on any Siegel set. It follows that it suffices to show that $E_{P \otimes \overline{Q}}((g_f=1,g_\infty),s)$ is rapidly decreasing on every Siegel set for the cusp defined by $W_\bullet$ for all $P \otimes \overline{Q} \in V^{p,q}$; since $E_{P \otimes \overline{Q}}(g,s)$ is an automorphic form, we can check this by showing that the constant term
$$
E_{P \otimes \overline{Q}}((g_f=1,g_\infty),s)_\mathbf{N} = \int_{\mathbf{N}(\mathbb{Q}) \backslash \mathbf{N}(\mathbb{A})} E_{P \otimes \overline{Q}}(n(g_f=1,g_\infty),s) dn
$$
vanishes, where $\mathbf{N}$ denotes the unipotent radical of the parabolic $\mathbf{P}$ corresponding to $W_\bullet$. Let us fix $P$ and $Q$ and drop $P \otimes \overline{Q}$ from the notation and write simply $E(g,s)$ and $E(g,s)_\mathbf{N}$. By transitivity of constant terms, we may assume that $\mathbf{P}$ is maximal, i.e. that the flag $W_\bullet$ consists of just one proper subspace $W_0$ of $k^N$. Note that under our assumptions on $v$ we have $\tilde{\phi_f}(\lambda)=0$ for $\lambda \in W_0$. For $\lambda \in k^N - W_0$, the orbit of $\lambda$ under $\mathbf{N}(\mathbb{Q})$ is $\lambda+W_0$. Writing $\mathbf{N}_\lambda(\mathbb{Q})$ for the stabilizer of $\lambda$ in $\mathbf{N}(\mathbb{Q})$,  we have
\begin{equation*}
\begin{split}
E & ((g_f=1,g_\infty),  s)_\mathbf{N} \\
&= \int_{\mathbf{N}(\mathbb{Q}) \backslash \mathbf{N}(\mathbb{A})} \left( \sum_{\lambda \in k^N - W_0} \tilde{\phi_f}(n_f^{-1}\lambda) \int_0^\infty \psi(t g_\infty^{-1}n_\infty^{-1}\lambda,Y; P \otimes \overline{Q}) t^{s+N+p-q} \frac{dt}{t} \right) dn \\
&= \sum_{\substack{\lambda \in k^N/W_0 \\ \lambda \neq 0}} \int_{\mathbf{N}(\mathbb{Q}) \backslash \mathbf{N}(\mathbb{A})} \left( \sum_{n' \in \mathbf{N}_\lambda(\mathbb{Q}) \backslash \mathbf{N}(\mathbb{Q})} \tilde{\phi_f}((n'n_f)^{-1}\lambda) \eta(tg_\infty^{-1} (n' n_\infty)^{-1} \lambda, Y; P \otimes \overline{Q},s) \right) dn \\
&= \mathrm{Vol}(\mathbf{N}_\lambda(\mathbb{Q}) \backslash \mathbf{N}_\lambda(\mathbb{A}_f)) \\
& \quad \cdot \sum_{\substack{\lambda \in k^N/W_0 \\ \lambda \neq 0}} \int_{\mathbf{N}_\lambda(\mathbb{A}_f) \backslash \mathbf{N}(\mathbb{A}_f)} \tilde{\phi_f}(n_f^{-1}\lambda) dn_f \int_{\mathbf{N}_\lambda(\mathbb{R}) \backslash \mathbf{N}(\mathbb{R})} \eta(tg_\infty^{-1} n_\infty^{-1} \lambda, Y; P \otimes \overline{Q},s) dn_\infty .
\end{split}
\end{equation*}
Let $\gamma \in \Gamma_0(\mathfrak{p},\Lambda(\mathfrak{I}))$ such that $l:=\langle \gamma e_1 \rangle \subseteq W_0$. Since the Schwartz function $\phi_\mathfrak{p}(\mathfrak{I})$, the $\mathfrak{p}$-component of $\phi_{f}$, satisfies
 \begin{equation} \label{eq:condition_2e_1}
 \int_{k_\mathfrak{p}} \phi_\mathfrak{p}(w+xe_1;\mathfrak{I}) dx = 0, \qquad w \in V_k(k_\mathfrak{p}),
 \end{equation}
using that $\phi_\mathfrak{p}(\mathfrak{I})$ is invariant under $\Gamma_0(\mathfrak{p},\Lambda(\mathfrak{I}))$, we compute
\begin{equation*}
\begin{split}
\int_{\mathbf{N}_\lambda(\mathbb{A}_f) \backslash \mathbf{N}(\mathbb{A}_f)} \tilde{\phi_f}(n_f^{-1}\lambda) dn_\mathfrak{p} &= \int_{W_0(\mathbb{A}_f)} \tilde{\phi_f}(\lambda +w) dw \\
&= \int_{W_0(\mathbb{A}_f)} \phi_f(-v+\lambda +w) dw \\
&= \int_{W_0(\mathbb{A}_f)/l(\mathbb{A}_f)} \int_{\mathbb{A}_f} \phi_f(-v+\lambda +w'+x\gamma e_1)  dx dw' \\
&= 0,
\end{split}
\end{equation*}
showing that indeed the constant term is zero.
\end{proof}

\subsection{Tits compactification and modular symbols} 
First recall that the Tits building $\Delta_\mathbb{Q}(\mathbf{G})$ is a simplicial set whose non-degenerate simplices are in bijection with (proper) rational parabolic subgroups $\mathbf{P}$ of $\mathbf{G}$, or equivalently with proper $k$-rational flags
$$
W_\bullet : 0 \subsetneq W_0 \subsetneq \cdots \subsetneq W_r \subsetneq k^N, \quad r \geq 0. 
$$
The stabilizer $\mathbf{P}(W_\bullet)$ of this flag is a rational parabolic of $\mathbf{G}$ that defines an $r$-simplex in $\Delta_\mathbb{Q}(\mathbf{G})$. Its $i$-th face is the simplex corresponding to the flag obtained from $W_\bullet$ by deleting $W_i$ (degenerate simplices correspond to proper flags where we allow $W_i=W_{i+1}$ for any $i$).

For a parabolic subgroup $\mathbf{P}$, denote by $\mathbf{N}_\mathbf{P}$ its unipotent radical and write $\mathbf{L}_\mathbf{P} = \mathbf{P}/\mathbf{N}_\mathbf{P}$ for its Levi quotient, $\mathbf{S}_\mathbf{P}$ for the maximal $\mathbb{Q}$-split torus in the center of $\mathbf{L}_\mathbf{P}$ and $A_\mathbf{P}=\mathbf{S}_\mathbf{P}(\mathbb{R})^0$ for the identity component of the real points of $\mathbf{S}_\mathbf{P}$. Writing $X(\mathbf{L}_\mathbf{P})_\mathbb{Q}$ for the group of rational characters of $\mathbf{L}_\mathbf{P}$, we define $\mathbf{M}_\mathbf{P} = \cap_{\alpha \in X(\mathbf{L}_\mathbf{P})_\mathbb{Q}} \ker \alpha^2$. Then we have the direct product decomposition
$$
\mathbf{L}_\mathbf{P}(\mathbb{R})=\mathbf{M}_\mathbf{P}(\mathbb{R})A_\mathbf{P}.
$$

The simplex in $\Delta_\mathbb{Q}(\mathbf{G})$ corresponding to $\mathbf{P}$ admits a natural geometric realization. To define it, let $\mathfrak{a}_\mathbf{P}$ and $\mathfrak{n}_\mathbf{P}$ be the Lie algebras of $A_\mathbf{P}$ and $N_\mathbf{P}$ respectively, and let $\Phi^+(P,A_\mathbf{P})$ be the set of roots for the adjoint action of $\mathfrak{a}_\mathbf{P}$ on $\mathfrak{n}_{\mathbf{P}}$. 
These roots define a positive chamber
$$
\mathfrak{a}_\mathbf{P}^+ = \left\{ H \in \mathfrak{a}_{\mathbf{P}} \ | \ \alpha(H)>0, \quad  \alpha \in \Phi^+(P,A_\mathbf{P}) \right\}.
$$
Writing $\langle \cdot, \cdot \rangle$ for the Killing form on $\mathfrak{g}$, we define an open simplex
$$
\mathfrak{a}_\mathbf{P}^+(\infty) = \left\{ H \in \mathfrak{a}_{\mathbf{P}}^+ \ | \ \langle H, H \rangle = 1  \right\} \subset \mathfrak{a}_\mathbf{P}^+
$$
and a closed simplex
$$
\overline{\mathfrak{a}_\mathbf{P}^+}(\infty) = \left\{ H \in \mathfrak{a}_{\mathbf{P}} \ | \ \alpha(H) \geq 0, \ \langle H, H \rangle = 1, \quad  \alpha \in \Phi^+(P,A_\mathbf{P}) \right\}
$$
in $\mathfrak{a}_\mathbf{P}$. Note that for $\mathbf{P}$ maximal the Lie algebra $\mathfrak{a}_\mathbf{P}$ is one-dimensional and so $\overline{\mathfrak{a}_\mathbf{P}^+}(\infty)$ is just a point. Moreover, if $\mathbf{Q}$ is another rational parabolic, then $\overline{\mathfrak{a}_\mathbf{Q}^+}(\infty)$ is a face of $\overline{\mathfrak{a}_\mathbf{P}^+}(\infty)$ if and only if $\mathbf{P} \subseteq \mathbf{Q}$. It follows that $\overline{\mathfrak{a}_\mathbf{P}^+}(\infty)$ gives a geometric realization of the simplex in $\Delta_\mathbb{Q}(\mathbf{G})$ corresponding to $\mathbf{P}$, and so the Tits building $\Delta_\mathbb{Q}(\mathbf{G})$ admits the geometric realization
\begin{equation} \label{eq:Tits_building_geometric_realization}
| \Delta_\mathbb{Q}(\mathbf{G}) | \sim \coprod_{\mathbf{P}} \overline{\mathfrak{a}_\mathbf{P}^+}(\infty)/\sim,
\end{equation}
where the union runs over all proper rational parabolics $\mathbf{P}$ of $\mathbf{G}$ and $\sim$ is the equivalence relation induced by the identification of $\overline{\mathfrak{a}_\mathbf{Q}^+}(\infty)$ with a face of $\overline{\mathfrak{a}_\mathbf{P}^+}(\infty)$ whenever $\mathbf{P} \subseteq \mathbf{Q}$. As a set we may write
$$
| \Delta_\mathbb{Q}(\mathbf{G}) | = \coprod_\mathbf{P}  \mathfrak{a}_\mathbf{P}^+(\infty)
$$
as a disjoint union of open simplexes $\mathfrak{a}_\mathbf{P}^+(\infty)$.

\subsubsection{Tits compactification} \label{subsubsection:Tits_compactification} Here we follow \cite{JiMacPherson02} and  \cite[\S  III.12]{BorelJi}. The Tits compactification $_{\mathbb{Q}}\overline{X}^T$ has boundary $|\Delta_\mathbb{Q}(\mathbf{G})|$: as a set we have
$$
_{\mathbb{Q}}\overline{X}^T = X \cup \coprod_\mathbf{P} \mathfrak{a}_{\mathbf{P}}^+(\infty).
$$
The topology on ${_\mathbb{Q}}\overline{X}^T$ can be described in terms of convergent sequences (for a full description see \cite{BorelJi}). Note that we have fixed $x_0 \in X$ corresponding to the maximal compact subgroup $K = \mathrm{SU}(N) \subset G =\mathbf{G}(\mathbb{R})$ and hence a unique Cartan involution $\theta$ of $G$ that fixes $K$ and extends to ${\bf G}$ (namely, $\theta(g)={^t}\overline{g}^{-1}$). There is a unique section $i_0:\mathbf{L}_\mathbf{P} \to \mathbf{P}$ of the quotient map $\mathbf{P} \to \mathbf{L}_\mathbf{P}$ with image invariant under $\theta$. We write $P=\mathbf{P}(\mathbb{R})$, $N_\mathbf{P}=\mathbf{N}_\mathbf{P}(\mathbb{R})$, $A_\mathbf{P}(x_0)=i_0(A_\mathbf{P})$ and $M_\mathbf{P}(x_0)=i_0(\mathbf{M}_\mathbf{P}(\mathbb{R}))$ and obtain the Langlands decomposition (explicitly given by \eqref{eq:Langlands_decomposition_standard_parabolic} for standard parabolics)
$$
P=N_\mathbf{P}A_\mathbf{P}(x_0)\mathbf{M}_\mathbf{P}(x_0).
$$
Writing $X_\mathbf{P}=\mathbf{M}_\mathbf{P}(x_0)/(K \cap \mathbf{M}_\mathbf{P}(x_0))$, this induces a diffeomorphism
\begin{equation} \label{eq:horosph_decomposition}
N_\mathbf{P} \times A_\mathbf{P}(x_0) \times X_\mathbf{P} \to X, \quad (n,a,mK) \mapsto namK.
\end{equation}
The topology  on $_\mathbb{Q} \overline{X}^T$ is characterized by the following properties:
\begin{enumerate}
\item The subspace topology on the boundary $|\Delta_\mathbb{Q}(\mathbf{G})|$ is the quotient topology given by \eqref{eq:Tits_building_geometric_realization}.

\item Let $x \in X$. A sequence $x_n \in {_\mathbb{Q}\overline{X}}^T$, $n \geq 1$, converges to $x$ if and only if $x_n \in X$ for $n \gg 0$ and $x_n$ converges to $x$ in the usual topology of $X$.

\item Let $H_\infty \in \mathfrak{a}_\mathbf{P}^+(\infty)$ and let $(x_j)_{j \geq 1}$ be a sequence in $X$. Write $x_j=n_j \exp(H_j)m_j$ for unique $n_j \in N_\mathbf{P}$, $H_j \in \mathfrak{a}_\mathbf{P}$ and $m_j \in X_\mathbf{P}$ according to the horospherical decomposition \eqref{eq:horosph_decomposition}. Then $x_j \to H_\infty$ if and only if $x_j$ is unbounded and
\begin{enumerate}
\item[(i)] $H_j/||H_j|| \to H_\infty$ in $\mathfrak{a}_\mathbf{P}$,
\item[(ii)] $d(n_j m_j x_0,x_0)/||H_j|| \to 0$,
\end{enumerate}
where $d$ denotes the Riemannian distance on $X$.
\end{enumerate}

With this topology, ${_\mathbb{Q} \overline{X}}^T$ is a Hausdorff space on which $\mathbf{G}(\mathbb{Q})$ acts continuously.

Given points $x \in X$ and $x' \in {_\mathbb{Q}}\overline{X}^T$, we denote by $[x,x']$ the unique oriented geodesic segment starting at $x$ and ending at $x'$. More explicitly, if $x' \in X$, we define $[x,x']$ to be the image of
\begin{equation} \label{eq:s_definition_1}
s(x,x'):[0,1] \to  {_\mathbb{Q}}\overline{X}^T; \quad t \mapsto s (t; x,x'),
\end{equation}
the constant speed parametrization by the unit interval of the unique oriented geodesic segment with $s(0;x,x')=x$ and $s(1;x,x')=x'$. If $x'$ belongs to the boundary of ${_\mathbb{Q}}\overline{X}^T$, then there exists a unique parabolic subgroup $\mathbf{P}$ such that $x'$ corresponds to $H_\infty \in \mathfrak{a}_\mathbf{P}^+(\infty)$. In the coordinates given by \eqref{eq:horosph_decomposition}, we  have $x=n \exp(H) m$, and we define $[x,x']$ to be the image of

\begin{equation} \label{eq:s_definition_2}
s(x,x') : [0,1]  \to {_\mathbb{Q}}\overline{X}^T; \quad t \mapsto s(t;x,x') =  \left\{ \begin{array}{ll} n \exp (H + \frac{t}{1-t}H_\infty ) m, & \mbox{if } t < 1, \\ x', & \mbox{if } t=1. \end{array} \right.
\end{equation}
Given subsets $S \subset X$ and $S' \subset {_\mathbb{Q}}\overline{X}^T$, the cone $C(S,S')$ (also known as the join $S*S'$) is the subset of ${_\mathbb{Q}}\overline{X}^T$ defined as
$$
C(S,S') = \bigcup_{\substack{x \in S \\ x' \in S'}} [x,x'].
$$
If $S=\{x\}$, we say that $C(S,S')$ is the cone on $S'$ with vertex $x$. When $S'$ is given by a simplicial map $\Delta_r \to \Delta_\mathbb{Q}(\mathbf{G})$ into the Tits boundary, the cone on $S'$ with vertex $x$ is naturally the image of an $(r+1)$-simplex $|\Delta_{r+1}| \to {_\mathbb{Q}}\overline{X}^T$ (oriented so that the boundary orientation agrees with that of $S'$). More generally, if $S$ is given by a simplicial map $|\Delta_{k}| \to X$ and $S'$ is given by a simplicial map $\Delta_r \to \Delta_\mathbb{Q}(\mathbf{G})$, then the cone $C(S,S')$  is the image of a map
\begin{equation} 
|\Delta_k| \times |\Delta_r| \times [0,1] \to _{\mathbb{Q}}\overline{X}^T
\end{equation}
that factors through the join
\begin{equation} \label{eq:cones_as_simplicial_maps}
|\Delta_{k+r+1}| \simeq |\Delta_k * \Delta_r| \to _{\mathbb{Q}}\overline{X}^T.
\end{equation}

\subsubsection{Modular symbols} For $k \geq 0$, let $\Delta_k '$ be the first barycentric subdivision of the standard $k$-simplex. Its vertices are in bijection with the non-empty subsets of $\{0,\ldots,k\}$, and a set of vertices $\{v_0,\ldots,v_r\}$ forms an $r$-simplex if and only if they are linearly ordered, i.e. $v_0 \subseteq \cdots \subseteq v_r$. Denote this simplex by $\Delta_{v_0,\ldots,v_r}$.

For a collection $\underline{\gamma}=(\gamma_0,\ldots,\gamma_{k-1})$ of $k \leq N$ elements of $\Gamma_0(\mathfrak{p},\Lambda(\mathfrak{I}))$, let us define a continuous map
$$
\Delta(\underline{\gamma}):|\Delta_{k-1}'| \to {_\mathbb{Q}}\overline{X}^T.
$$
Assume first that $\langle \gamma_0 e_1,\ldots,\gamma_{k-1} e_1 \rangle \neq k^N$. For each chain $v_0 \subseteq \cdots \subseteq v_r$ defining an $r$-simplex in $\Delta'_{k-1}$, the flag
\begin{equation} \label{eq:proper_flag_1}
0 \subsetneq \langle \gamma_i e_1 \ | \ i \in v_0 \rangle \subseteq \langle \gamma_i e_1 \ | \ i \in v_1 \rangle \subseteq \cdots \subseteq \langle \gamma_i e_1 \ | \ i \in v_r \rangle \subsetneq k^N
\end{equation}
is a proper flag of length $r$. We define $\Delta(\underline{\gamma})(\Delta_{v_0,\ldots,v_r})$ to be the corresponding (possibly degenerate) $r$-simplex in $\Delta_\mathbb{Q}(\mathbf{G})$; we give this simplex the orientation induced by $\Delta(\underline{\gamma})$ by the standard orientation on $\Delta_{k-1}'$. This assignment preserves faces and degeneracies and so defines a simplicial map $\Delta(\underline{\gamma})$.

 Next assume that $k=N$ and the vectors $\gamma_0 e_1,\ldots,\gamma_{N-1} e_1$ are linearly independent. Define 
\begin{equation}
A(\underline{\gamma}) = (\gamma_0 e_1|\cdots|\gamma_{N-1} e_1) \in M_N(\mathcal{O}) \cap \mathrm{GL}_N(k)
\end{equation}
 to be the matrix formed by the first columns of $\gamma_0,\ldots,\gamma_{N-1}$. Fix an $N$-th root $(\det A(\underline{\gamma}))^{-1/N}$ of $\det A(\underline{\gamma})^{-1}$ and let $a(\underline{\gamma})=(\det A(\underline{\gamma}))^{-1/N}A(\underline{\gamma})$; the matrix $a(\underline{\gamma})$ has determinant one and defines a point 
\begin{equation} \label{eq:mod_symbol_barycenter}
x_0(\underline{\gamma})=a(\underline{\gamma})K \in X
\end{equation}
(independent of the choice of $N$-th root above). Suppose that $v_0 \subsetneq \cdots \subsetneq v_r$ is a chain defining a non-degenerate $r$-simplex in $\Delta'_{N-1}$. If $v_r \neq \{0,\ldots,N-1\}$, then we define $\Delta(\underline{\gamma})(\Delta_{v_0,\ldots,v_r})$ to be the $r$-simplex of $\Delta_\mathbb{Q}(\mathbf{G})$ corresponding to the flag \eqref{eq:proper_flag_1}. If $v_r = \{0,\ldots,N-1\}$, then we define
\begin{equation} \label{eq:mod_symbol}
\Delta(\underline{\gamma})(\Delta_{v_0,\ldots,v_r}) = \text{cone on } \Delta(\underline{\gamma})(\Delta_{v_0,\ldots,v_{r-1}}) \text{ with vertex } x_0(\underline{\gamma}).
\end{equation}
These assignments are compatible with face maps and therefore give rise to a well-defined continuous map $\Delta(\underline{\gamma}): \Delta_{N-1} ' \to {_\mathbb{Q}}\overline{X}^T$. By induction on $k$ one shows that
\begin{equation} \label{eq:modular_symbol_equivariance}
\Delta(\gamma'\gamma_0,\ldots,\gamma' \gamma_{k-1}) = \gamma'\Delta(\gamma_0,\ldots,\gamma_{k-1}), \qquad \text{ for } \gamma' \in \Gamma_0(\mathfrak{p},\Lambda(\mathfrak{I})).
\end{equation}

Note that when the vectors $\gamma_i e_1$ are linearly dependent, the image of the map $\Delta(\underline{\gamma})$ is contained in the boundary of ${_\mathbb{Q}}\overline{X}^T$. When they are linearly independent, the intersection
\begin{equation} \label{eq:def_mod_symbol_interior}
\Delta^\circ(\underline{\gamma}) := X \cap \mathrm{Im}(\Delta(\underline{\gamma}))
\end{equation}
 of the image of $\Delta(\underline{\gamma})$ with the interior $X$ of ${_\mathbb{Q}}\overline{X}^T$ is a submanifold of dimension $N-1$, namely
\begin{equation} \label{eq:delta_gamma_explicit_description}
\Delta^\circ(\underline{\gamma}) = \{a(\underline{\gamma})\mathrm{diag}(t_1,\ldots,t_N)K \ | \ t_i \in \mathbb{R}_{>0}, \ \Pi_{i=1}^N t_i = 1\} \subset G/K=X.
\end{equation}
(To see this,
we may assume that $\gamma_i e_1 = e_i$, so that $a(\underline{\gamma})$ is the identity matrix $1_N$. Consider first a non-degenerate simplex $\Delta_{v_0,\ldots,v_r}$ in $\Delta_{N-1}'$ with $v_k=\{1,\ldots,|v_k|\}$. If $|v_r| <N$, then the $r$-simplex $\Delta(1_N)(\Delta_{v_0,\ldots,v_r})$ in the Tits compactification of $X$ corresponds to the standard proper flag
$$
0 \subsetneq \langle e_i \ | \ i \leq |v_0| \rangle \subsetneq \cdots \subsetneq  \langle e_i \ | \ i \leq |v_r| \rangle \neq V_k,
$$
and the subgroup $A_\mathbf{P}$ of the corresponding parabolic $\mathbf{P}$ is
$$
A_\mathbf{P} = \left\{ a(t_0,\ldots,t_{r+1}) := \left. \begin{pmatrix} t_0 \cdot 1_{|v_0|} & & \\  & t_1 \cdot 1_{|v_1|-|v_0|} & & \\ & & \ddots & \\ & & & t_{r+1} 1_{N-|v_r|} \end{pmatrix} \ \right| \ t_i >0, \ \det a(t_0,\ldots,t_{r+1}) = 1 \right\}.
$$
If $|v_r|=N$, then the cone of $\Delta(1_N)(\Delta_{v_0,\ldots,v_{r-1}})$ with vertex $x_0$ is
$$
\{a(t_0,\ldots,t_{r+1})K \ | \ a(t_0,\ldots,t_{r+1}) \in A_\mathbf{P}, t_0 \geq \cdots \geq t_{r+1} \}.
$$
The statement follows since any $r$-simplex $\Delta(1_N)(\Delta_{v_0,\ldots,v_r})$ can be obtained as a translate of a simplex corresponding to a standard flag as above by a Weyl group element.)

\medskip

The  coordinates $a(\underline{\gamma})\mathrm{diag}(t_1,\ldots,t_N)K \mapsto t_i$ identify $\Delta^\circ(\underline{\gamma})$ with the manifold $C$ defined in \eqref{eq:C_manifold}. This isomorphism is orientation preserving.\footnote{Recall that we have defined the orientation of $\Delta^\circ(\underline{\gamma})$ to be induced by the boundary and that the orientation on $C$ is fixed in \S \ref{subsection:Integral_on_max_torus}.} For convenience we define $\Delta^\circ(\underline{\gamma}) =\emptyset$ if $A(\underline{\gamma})$ is not invertible.

Note that \eqref{eq:delta_gamma_explicit_description} implies that $\Delta^\circ(\underline{\gamma})$ admits a finite cover by $\mathrm{SL}_N(k)$-translates of (images in $X$ of) standard Siegel sets of the form \eqref{eq:std_Siegel_def_1}. One may take this cover to consist of one Siegel set for every parabolic $\mathbf{P}$ stabilizing a flag consisting of subspaces of the form $\langle \gamma_i e_1 \ | \ i \in I \rangle$ for $I \subsetneq \{0,\ldots,N-1\}$.

\subsection{Evaluation on modular symbols and the cocycle property} 

We can now relate the Eisenstein series $E_\mathfrak{p}(v,P \otimes \overline{Q};\psi,\Lambda(\mathfrak{I}))$  and the Eisenstein cocycle.

\begin{proposition}\label{prop:Eis_integral_on_modular_symbols} Assume that $v $ does not lie in any $\Lambda(\mathfrak{pI})$-translate of a proper subspace of $V$ of the form $\langle \gamma_i e_1 \ | \ i \in I \rangle$ for $I \subseteq \{0,\ldots,N-1\}$. Then $\mathbf{\Phi}^{p,q}_\mathfrak{p}(\cdot,\underline{\gamma},\Lambda(\mathfrak{I}))$ is defined at $v$ and 
$$
\mathbf{\Phi}^{p,q}_\mathfrak{p}(v ,\underline{\gamma},\Lambda(\mathfrak{I})) (P \otimes \overline{Q} ) = \int_{\Delta^\circ(\underline{\gamma})} E_\mathfrak{p}(v,P \otimes \overline{Q};\psi^{p,q} ,\Lambda(\mathfrak{I})) .$$
\end{proposition}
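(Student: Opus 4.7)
The plan is to compute the right-hand side by unfolding the Eisenstein series $E_\mathfrak{p}$, reducing the integration over $\Delta^\circ(\underline{\gamma})$ to an integral over the standard torus orbit $T/T\cap K$, and applying \autoref{lemma:eta_integration}. The hypothesis on $v$ plays two roles: it ensures that none of the denominators in the resulting Kronecker--Eisenstein series vanishes, and via \autoref{thm:good_cusp_rapid_decrease} it makes $E_\mathfrak{p}(v,P\otimes\overline{Q};\psi^{p,q},\Lambda(\mathfrak{I}))$ rapidly decreasing on each Siegel set in the finite cover of $\Delta^\circ(\underline{\gamma})$ mentioned at the end of \S 3.4, so that the integral on the right-hand side converges absolutely at $s=0$.

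The first step is to work in the region $\mathrm{Re}(s) \gg 0$, where the Mellin presentation
\[
E(v,P\otimes\overline{Q};\psi,\Lambda,s) = \sum_{\lambda \in \Lambda} \eta(v+\lambda,P\otimes\overline{Q},s)
\]
converges absolutely uniformly on compacta of $X$. This lets one exchange sum and integral to obtain
\[
\int_{\Delta^\circ(\underline{\gamma})} E(v,P\otimes\overline{Q};\psi,\Lambda,s) = \sum_{\lambda \in \Lambda} \int_{\Delta^\circ(\underline{\gamma})} \eta(v+\lambda,P\otimes\overline{Q},s).
\]
Next I invoke the explicit description $\Delta^\circ(\underline{\gamma}) = a(\underline{\gamma}) \cdot (T/T \cap K)$ from \eqref{eq:delta_gamma_explicit_description} (with matching orientation by \S\ref{subsection:Integral_on_max_torus}) and the $G$-equivariance \eqref{eq:eta_invariance}, performing the change of variables to rewrite each term as an integral over $T/T \cap K$ with argument $a(\underline{\gamma})^{-1}(v+\lambda)$ and transformed polynomial data $a(\underline{\gamma})^{-1}(P\otimes\overline{Q})$.

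\autoref{lemma:eta_integration} then evaluates each such integral in closed form. The factors of $(\det A(\underline{\gamma}))^{\pm 1/N}$ introduced by $a(\underline{\gamma}) = (\det A(\underline{\gamma}))^{-1/N} A(\underline{\gamma})$ combine with the action of $a(\underline{\gamma})$ on $P \otimes \overline{Q}$: the auxiliary $N$-th roots cancel (as $\det a(\underline{\gamma})=1$) and what survives is the factor $\det A(\underline{\gamma})^{-1}$ together with the natural $A(\underline{\gamma})$-action on the basis vector $e^{I,J} \in V^{p,q}$ precisely as in the definition of $\mathbf{\Phi}^{p,q}_\mathfrak{p}$. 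Summing over $\lambda \in \Lambda(\mathfrak{I})$ reassembles $K^{I,J}(A(\underline{\gamma})^{-1}v, A(\underline{\gamma})^{-1}\Lambda(\mathfrak{I}), s)$, and taking the $\mathfrak{p}$-smoothed combination yields the Dedekind sum $D^{I,J}_\mathfrak{p}(v,A(\underline{\gamma}),\Lambda(\mathfrak{I}))$.

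The final step is to analytically continue to $s=0$: both sides admit regular continuation there, the LHS directly by the rapid decay discussed above, and the RHS as a sum of continued Kronecker--Eisenstein series. I expect the main obstacle to be the bookkeeping in the third step --- verifying that the $a(\underline{\gamma})$-twist of $P\otimes\overline{Q}$ and the rescaling by $(\det A(\underline{\gamma}))^{1/N}$ conspire to produce exactly the coefficient $A(\underline{\gamma})e^{I,J}$ of the Dedekind sum in the definition of $\mathbf{\Phi}^{p,q}_\mathfrak{p}$. Once the equivariance of $\eta$ under $\GL_N$ (not only $\SL_N$) is organized correctly via the homogeneity property \eqref{eta_homogeneity}, this bookkeeping is routine and the identification at $s=0$ follows.
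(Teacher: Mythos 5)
Your proposal follows essentially the same route as the paper: unfold the Eisenstein series in the convergent range, identify $\Delta^\circ(\underline{\gamma})$ with $a(\underline{\gamma})\cdot(T/T\cap K)$, use the equivariance \eqref{eq:eta_invariance} together with the homogeneity \eqref{eta_homogeneity} to extract the factor $|\det A(\underline{\gamma})|^{-s/N}\det A(\underline{\gamma})^{-1}$, evaluate via \autoref{lemma:eta_integration}, and conclude by analytic continuation at $s=0$. The bookkeeping you flag is handled in the paper simply by testing against $P\otimes\overline{Q}$ with $A(\underline{\gamma})^{-1}P$ and $\overline{A(\underline{\gamma})^{-1}Q}$ monomial (and noting the trivial case $\Delta^\circ(\underline{\gamma})=\emptyset$ when $A(\underline{\gamma})$ is singular), so your argument is correct and matches the paper's.
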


\begin{proof} Consider the matrix $A(\underline{\gamma})=(\gamma_0 e_1 | \ldots | \gamma_{N-1} e_1)$. If $A(\underline{\gamma})$ is not invertible, then both sides are zero by definition. Now assume that $A(\underline{\gamma})$ is invertible and take $A(\underline{\gamma})^{-1}P$ and $\overline{A(\underline{\gamma})^{-1} Q}$ to be monomial, say $A(\underline{\gamma})^{-1}P(z)=z^I=z_1^{i_1} \cdots z_N^{i_N}$ and $\overline{A(\underline{\gamma})^{-1}Q}(\overline{z})=\overline{z}^J=\overline{z_1}^{j_1} \cdots \overline{z_N}^{j_N}$; it suffices to show that with this choice of $P$ and $\overline{Q}$ we have
$$
\int_{\Delta^\circ(\underline{\gamma})} E_\mathfrak{p}(v,P \otimes \overline{Q};\psi^{p,q},\Lambda(\mathfrak{I})) = D^{I,J}_\mathfrak{p}(v,A(\underline{\gamma}),\Lambda(\mathfrak{I})).
$$
Note that the proof of Proposition \ref{thm:good_cusp_rapid_decrease} shows that, for any $s$, the Eisenstein series $E_\mathfrak{p}(v,
P \otimes \overline{Q};\psi, \Lambda(\mathfrak{I}),s)$ is rapidly decreasing on any Siegel set corresponding to a good cusp; since $\Delta^\circ(\underline{\gamma})$ admits a finite cover by such Siegel sets, it follows that $E_\mathfrak{p}(v,
P \otimes \overline{Q};\psi, \Lambda(\mathfrak{I}),s)$ is integrable over $\Delta^\circ(\underline{\gamma})$. For $\mathrm{Re}(s) \gg 0$, we compute
\begin{multline*}
\int_{\Delta^\circ(\underline{\gamma})} E(v,P \otimes \overline{Q};\psi^{p,q},\Lambda(\mathfrak{I}),s) \\
\begin{split}
&= \sum_{\lambda \in \Lambda(\mathfrak{I})} \int_{\Delta^\circ(\underline{\gamma})} \eta^{p,q}(v+\lambda,P \otimes \overline{Q},s) \\
&= \sum_{\lambda \in \Lambda(\mathfrak{I})} \int_{T/T\cap K} a(\underline{\gamma})^*\eta^{p,q} (v+\lambda,P \otimes \overline{Q},s) \\
&= \sum_{\lambda \in \Lambda(\mathfrak{I})} \int_{T/T\cap K} \eta^{p,q} (a(\underline{\gamma})^{-1}(v+\lambda),a(\underline{\gamma})^{-1}(P \otimes \overline{Q}),s) \\
&= |\det A(\underline{\gamma})|^{-s/N} \det A(\underline{\gamma})^{-1} \sum_{\lambda \in \Lambda(\mathfrak{I})} \int_{T/T\cap K} \eta^{p,q} (A(\underline{\gamma})^{-1}(v+\lambda),A(\underline{\gamma})^{-1}(P \otimes \overline{Q}),s),
\end{split}
\end{multline*}
where the last equality follows from the homogeneity property \eqref{eta_homogeneity}. The desired identity follows by analytic continuation from  \autoref{lemma:eta_integration}.
\end{proof}

We can now use \autoref{prop:Eis_integral_on_modular_symbols} to prove that $\mathbf{\Phi}^{p,q}_\mathfrak{p}(\Lambda(\mathfrak{I}))$ is indeed an $(N-1)$-cocycle, i.e. that it satisfies property \eqref{eq:cocycle_def_closed}.

Given $N+1$ elements $\gamma_0,\ldots,\gamma_N \in \Gamma_0(\mathfrak{p},\Lambda(\mathfrak{I}))$, write $S_j$ for $\Delta(\gamma_0,\ldots,\widehat{\gamma_j}, \ldots, \gamma_N )$ and fix $x \in X$ such that
$$
x \notin 
\bigcup_{0 \leq j \leq N} S_j.
$$
 Then $S_j$ is an (oriented) $(N-1)$-simplex in the boundary of ${_\mathbb{Q}}\overline{X}^T$, and we denote by $(-1)^j S_j$ the same simplex with opposite orientation if $j$ is odd. Note that $\sum_j (-1)^j S_j$ is a cycle, i.e. $\sum (-1)^j \partial S_j=0$. For each $j$ with $0 \leq j \leq N$, we next define an $N$-simplex $C_j:|\Delta_N| \to {_\mathbb{Q}}\overline{X}^T$. Assume first that $\langle \gamma_0 e_1,\ldots,\widehat{\gamma_j e_1}, \ldots, \gamma_N e_1 \rangle \neq k^N$. We define $C_j$ to be the cone on $(-1)^j S_j$ with vertex $x$ (cf. \S  \ref{subsubsection:Tits_compactification}); its boundary is $\partial C_j = (-1)^j S_j - (-1)^j C(x,\partial S_j)$.

Now assume that $\langle \gamma_0 e_1,\ldots,\widehat{\gamma_j e_1}, \ldots, \gamma_N e_1 \rangle = k^N$. Then the intersection $S^\circ_j$ of $S_j$ with $X$ is non-empty, and in \eqref{eq:mod_symbol_barycenter} we have defined a barycenter $x_j:=x_0(\gamma_0,\ldots,\widehat{\gamma_j}, \ldots, \gamma_N) \in S^\circ_j$ such that $S_j$ is the cone on $\partial S_j$ with vertex $x_j$.  We define $C_j$ to be the cone $C([x,x_j], \partial S_j)$, where $[x,x_j]$ denotes the oriented geodesic segment from $x$ to $x_j$. More explicitly, let $s:[0,1] \to [x,x_j]$ be the constant speed parametrization of the geodesic segment joining $s(0)=x$ to $s(1)=x_j$. For each simplex $\Delta(\gamma_0 ,\ldots, \widehat{\gamma_j }, \ldots, \gamma_N )(\Delta_{v_0,\ldots,v_r})$ contained in $\partial S_j$, let $\mathbf{P}$ be the corresponding parabolic; writing $s(t)=n(t)\exp(H_t)m(t)$ we obtain a map
\begin{equation} \label{eq:explicit_cone_cocycle_relation}
[0,1] \times \mathfrak{a}_\mathbf{P}^+ \to X, \qquad (t,H') \mapsto n(t)\exp(H_t+H')m(t),
\end{equation}
whose closure is $C([x,x_j],\Delta(\gamma_0,\ldots, \widehat{\gamma_j}, \ldots, \gamma_N)(\Delta_{v_0,\ldots,v_r}))$. Since $\partial S_j$ has empty boundary, the boundary of $C_j$ is the union of $S_j$ (= the cone on $S_j^\circ$ with vertex $x_j$) and the cone $C(x,\partial S_j)$; we orient $C_j$ so that the induced orientation on $S_j$ is that given by \eqref{eq:mod_symbol}, so that $\partial C_j = (-1)^j S_j - (-1)^j C(x,\partial S_j)$. 

It follows that the sum $\Delta(\gamma_0,\ldots,\gamma_N)=\sum_j C_j$ has boundary $\sum_j (-1)^j S_j$. The cocycle property \eqref{eq:cocycle_def_closed} follows immediately from Stokes' theorem and the following lemma.

\begin{lemma}
Assume that $v $ does not lie in any $\Lambda(\mathfrak{pI})$-translate of a proper subspace of $V$ of the form $\langle \gamma_i e_1 \ | \ i \in I \rangle$ for $I \subseteq \{0,\ldots,N\}$. Then the Eisenstein series $E_\mathfrak{p}(v,P \otimes \overline{Q};\psi^{p,q} ,\Lambda(\mathfrak{I}))$ is rapidly decreasing  on $\Delta(\gamma_0,\ldots,\gamma_N)$.
\end{lemma}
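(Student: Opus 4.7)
The plan is to cover the chain $\Delta(\gamma_0,\ldots,\gamma_N) \cap X$ by finitely many Siegel sets, each attached to a cusp defined by a proper flag of $V_k$ built from subspaces of the form $\langle \gamma_i e_1 \mid i \in I \rangle$ with $I \subsetneq \{0,\ldots,N\}$, and then to invoke \autoref{thm:good_cusp_rapid_decrease} on each of them.

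To carry out the cover, I would decompose $\Delta(\gamma_0,\ldots,\gamma_N) = \sum_j C_j$ further, according to the non-degenerate subsimplices of the simplicial maps $\Delta(\gamma_0,\ldots,\widehat{\gamma_j},\ldots,\gamma_N) : \Delta_{N-1}' \to {_\mathbb{Q}}\overline{X}^T$. Each such subsimplex corresponds to a chain $v_0 \subsetneq \cdots \subsetneq v_r$ of subsets of $\{0,\ldots,N\}\setminus\{j\}$, hence to the proper flag $W_\bullet : 0 \subsetneq W_{v_0} \subsetneq \cdots \subsetneq W_{v_r} \subsetneq V_k$ with $W_{v_k} = \langle \gamma_i e_1 \mid i \in v_k \rangle$ (in the subcase where $C_j$ is a cone on $\partial S_j$ coming from Case $2$ of the construction, one has $v_r \subsetneq \{0,\ldots,N\}\setminus\{j\}$, while in the Case $1$ subcase the chains may go up to $v_r = \{0,\ldots,N\}\setminus\{j\}$, which is still a proper subset of $\{0,\ldots,N\}$). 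Using the explicit parametrizations of the cones, given by \eqref{eq:s_definition_2} in Case $1$ and by \eqref{eq:explicit_cone_cocycle_relation} in Case $2$, I would show that each such piece is contained in a Siegel set for the cusp defined by $W_\bullet$: the parameter $H' \in \mathfrak{a}_\mathbf{P}^+$ supplies the unbounded direction toward the cusp, while the remaining simplicial parameters (trajectory along $[x,x_j]$ or along a boundary simplex) contribute only a relatively compact factor $nm$ in $N_\mathbf{P} M_\mathbf{P}(x_0)$, using that $\exp(H_t)$ commutes with $m(t) \in M_\mathbf{P}(x_0)$ and that $t$ ranges over a compact interval.

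For each flag $W_\bullet$ arising in this way, both hypotheses of \autoref{thm:good_cusp_rapid_decrease} are satisfied: picking any $i_0 \in v_0$, we have $\gamma_{i_0} \in \Gamma_0(\mathfrak{p},\Lambda(\mathfrak{I}))$ with $\gamma_{i_0} e_1 \in W_{v_0}$, so $W_\bullet$ defines a good cusp; and $W_{v_r}$ is a proper subspace of $V$ of the prescribed form $\langle \gamma_i e_1 \mid i \in I\rangle$ with $I = v_r \subsetneq \{0,\ldots,N\}$, so the hypothesis on $v$ gives $(v + \Lambda(\mathfrak{pI})) \cap W_{v_r} = \emptyset$. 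The proposition then yields rapid decrease of $E_\mathfrak{p}(v, P \otimes \overline{Q}; \psi^{p,q}, \Lambda(\mathfrak{I}))$ on each Siegel set in the cover, and since the cover is finite the claim follows. The main obstacle I expect is the verification that each elementary piece of $C_j$ actually lies in a Siegel set of the expected parabolic --- this is routine for the Case $1$ cones with vertex $x \in X$, but requires slightly more care in Case $2$ where one must absorb the bounded contribution of the moving barycentric segment $[x,x_j]$ into the compact part $\omega$ of the Siegel set; once that geometric fact is established, the rest is bookkeeping.
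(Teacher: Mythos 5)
Your proposal is correct and follows essentially the same route as the paper: the paper's proof likewise reduces the claim, via \autoref{thm:good_cusp_rapid_decrease}, to covering each cone $C_j$ by finitely many Siegel sets attached to good cusps defined by flags of subspaces $\langle \gamma_i e_1 \mid i \in I \rangle$, citing the explicit cone parametrization \eqref{eq:explicit_cone_cocycle_relation}. Your write-up merely supplies the details (choice of flags, verification of the good-cusp condition and of the hypothesis on $v$) that the paper leaves as ``obvious.''
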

\begin{proof}
By \autoref{thm:good_cusp_rapid_decrease}, it suffices to show that each $C_j$ can be covered by finitely many Siegel sets of good cusps, which is obvious from the explicit description \eqref{eq:explicit_cone_cocycle_relation}.
\end{proof}

\section{Eisenstein cocycle and critical values of Hecke L-series}

\subsection{Units of extensions of $k$} Let $L$ be a field extension of $k$ of degree $N \geq 2$. We denote its ring of integers by $\mathcal{O}_L$ and write $\sigma_1,\ldots,\sigma_N$ for the complex embeddings of $L$ in $\mathbb{C}$ extending $\sigma$. We obtain an embedding
\[
\underline{\sigma} \in \mathrm{Hom}_\mathcal{O}(L, \mathbb{C}^N), \quad \underline{\sigma}(l) = (\sigma_1(l),\ldots,\sigma_N(l)).
\] 
Let $n:L^\times \to k^\times$ be the norm map and $L^1$ be the kernel of $n$. We fix ideals $\mathfrak{a}$, $\mathfrak{P}$ and $\mathfrak{f}$ of $\mathcal{O}_L$ that are pairwise coprime and such that $\mathfrak{p}:=n(\mathfrak{P})$ is prime. We let $U(\mathfrak{f})=\mathcal{O}_L^\times \cap (1+\mathfrak{f})$ and $U(\mathfrak{f})^1=U(\mathfrak{f}) \cap L^1$. We denote by $U(\mathfrak{f})^1_{\mathrm{tors}}$ the torsion subgroup of $U(\mathfrak{f})^1$ and fix units $u_1,\ldots,u_{N-1} \in U(\mathfrak{f})$ that generate a subgroup $U(\mathfrak{f})':=\langle u_1,\ldots,u_{N-1} \rangle$ of $U(\mathfrak{f})^1$ that is free abelian of rank $N-1$ and maps bijectively to $U(\mathfrak{f})^1/U(\mathfrak{f})^1_{\mathrm{tors}}$ via the quotient map.

\begin{lemma} \label{L41}
Let $\mathfrak{I}$ be a fractional ideal of $\mathcal{O}$ coprime to $\mathfrak{p}$ and isomorphic to $(\mathrm{det}_\mathcal{O}(\mathfrak{fa}^{-1}))^{-1}$. There exists a $k$-isomorphism $\alpha: L \xrightarrow{\sim} k^N$ making the diagram
$$
\begin{tikzcd}
\mathfrak{fa}^{-1} \arrow[r,"\alpha"', "\sim"] \arrow[d,hook] & \Lambda(\mathfrak{I}) \arrow[d,hook] \\ \mathfrak{f(aP)}^{-1} \arrow[r,"\alpha"', "\sim"] & \Lambda(\mathfrak{pI})
\end{tikzcd}
$$
commute.
\end{lemma}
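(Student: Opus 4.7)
The plan is to build $\alpha$ using Steinitz's structure theorem for finitely generated torsion-free modules over the Dedekind domain $\mathcal{O}$, and then to adjust it by an automorphism of $\Lambda(\mathfrak{I})$ so that both squares of the diagram commute simultaneously.

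First I would regard $\mathfrak{fa}^{-1}$ as a finitely generated torsion-free $\mathcal{O}$-module of rank $N$ via the inclusion $\mathcal{O} \hookrightarrow \mathcal{O}_L$. By Steinitz's theorem its $\mathcal{O}$-module isomorphism class is determined by its Steinitz class $[\det_\mathcal{O}(\mathfrak{fa}^{-1})]$, and the hypothesis on $\mathfrak{I}$ then produces an $\mathcal{O}$-linear isomorphism $\alpha_0 \colon \mathfrak{fa}^{-1} \xrightarrow{\sim} \mathfrak{I}^{-1} \oplus \mathcal{O}^{N-1} = \Lambda(\mathfrak{I})$. Extending by $k$-linearity yields a $k$-isomorphism $\alpha \colon L \xrightarrow{\sim} k^N$ for which the top square commutes by construction.

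Next I would analyze the image $M' := \alpha(\mathfrak{f(aP)}^{-1}) = \alpha(\mathfrak{P}^{-1} \mathfrak{fa}^{-1})$. Since $n(\mathfrak{P}) = \mathfrak{p}$ is prime, $\mathfrak{P}$ has residue degree one over $\mathfrak{p}$, so the $\mathcal{O}$-module $M'/\Lambda(\mathfrak{I}) \cong \mathcal{O}_L/\mathfrak{P}$ is isomorphic to $\kappa := \mathcal{O}/\mathfrak{p}$. Moreover $\mathfrak{p}\mathcal{O}_L \subset \mathfrak{P}$ forces $\mathfrak{P}^{-1} \subset \mathfrak{p}^{-1}\mathcal{O}_L$, and hence $M' \subset \mathfrak{p}^{-1}\Lambda(\mathfrak{I})$. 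Consequently $M'/\Lambda(\mathfrak{I})$ is a $\kappa$-line in the $N$-dimensional $\kappa$-vector space $W := \mathfrak{p}^{-1}\Lambda(\mathfrak{I})/\Lambda(\mathfrak{I})$. The same analysis applied to $\Lambda(\mathfrak{pI}) = (\mathfrak{pI})^{-1} \oplus \mathcal{O}^{N-1}$ identifies the image of $\Lambda(\mathfrak{pI})/\Lambda(\mathfrak{I}) \cong \kappa$ in $W$ with the first coordinate line.

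It remains to adjust $\alpha_0$ so that these two lines coincide. Because $\mathfrak{I}$ is coprime to $\mathfrak{p}$, the localization $\Lambda(\mathfrak{I})_\mathfrak{p}$ is free of rank $N$ over $\mathcal{O}_\mathfrak{p}$, giving a natural identification $W \cong \kappa^N$. Under this identification the image of $\Gamma(\Lambda(\mathfrak{I}))$ under reduction mod $\mathfrak{p}$ coincides with that of $\mathrm{SL}_N(\mathcal{O})$, which surjects onto $\mathrm{SL}_N(\kappa)$ (a special case of strong approximation, but here provable directly by lifting elementary matrices since $N \geq 2$). Because $\mathrm{SL}_N(\kappa)$ acts transitively on lines of $\kappa^N$, there exists $g \in \Gamma(\Lambda(\mathfrak{I}))$ sending $M'/\Lambda(\mathfrak{I})$ onto the first coordinate line; replacing $\alpha_0$ by $g \circ \alpha_0$ then yields the desired $\alpha$. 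The main substantive point is this final transitivity claim, which reduces to the well-known surjectivity of $\mathrm{SL}_N(\mathcal{O}) \to \mathrm{SL}_N(\kappa)$; everything else is essentially bookkeeping with the structure theorem.
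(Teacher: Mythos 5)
Your argument is correct, and its second half takes a genuinely different route from the paper's. Both proofs begin the same way: Steinitz's theorem together with the hypothesis on the class of $\mathfrak{I}$ gives an $\mathcal{O}$-isomorphism $\tilde{\alpha}\colon \mathfrak{fa}^{-1}\xrightarrow{\sim}\Lambda(\mathfrak{I})$, and the entire content is to correct $\tilde{\alpha}$ so that the image of $\mathfrak{f(aP)}^{-1}$ becomes exactly $\Lambda(\mathfrak{pI})$. The paper does this adelically: the two overlattices of $\Lambda(\mathfrak{I})$ agree at every place $v\neq\mathfrak{p}$, one picks a local $g_\mathfrak{p}\in \mathrm{SL}_N(\Lambda(\mathfrak{I})_\mathfrak{p})$ matching them at $\mathfrak{p}$, and strong approximation (density of $\mathrm{SL}_N(k)$ in $\mathrm{SL}_N(\mathbb{A}_{k,f})$) produces a global $g$ stabilizing $\Lambda(\mathfrak{I})$ with $g\Lambda_2=\Lambda_1$. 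You instead note that both overlattices are the preimages of $\kappa$-lines ($\kappa=\mathcal{O}/\mathfrak{p}$) in the $N$-dimensional space $\mathfrak{p}^{-1}\Lambda(\mathfrak{I})/\Lambda(\mathfrak{I})$ --- using residue degree one of $\mathfrak{P}$ over $\mathfrak{p}$, just as the paper does implicitly --- and you find the correcting element inside the arithmetic group $\Gamma(\Lambda(\mathfrak{I}))$ itself, via surjectivity of reduction modulo $\mathfrak{p}$ onto $\mathrm{SL}_N(\kappa)$ and transitivity of $\mathrm{SL}_N(\kappa)$ on lines; since an intermediate lattice $\Lambda(\mathfrak{I})\subseteq M\subseteq\mathfrak{p}^{-1}\Lambda(\mathfrak{I})$ is determined by its image modulo $\Lambda(\mathfrak{I})$, this does yield $g M'=\Lambda(\mathfrak{pI})$. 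The one place you are too loose is the surjectivity claim: what you need is that $\Gamma(\Lambda(\mathfrak{I}))$ itself surjects onto $\mathrm{SL}_N(\kappa)$, and asserting that its image ``coincides with that of $\mathrm{SL}_N(\mathcal{O})$'' is precisely what must be proved, since the two groups differ when $\mathfrak{I}$ is non-principal. It does hold by the elementary-matrix lifting you invoke, provided you observe that the off-diagonal entries of elementary matrices in $\Gamma(\Lambda(\mathfrak{I}))$ are constrained to lie in $\mathfrak{I}^{\pm 1}$, and that these ideals still surject onto $\kappa$ because $\mathfrak{I}$ is coprime to $\mathfrak{p}$ --- the same coprimality that the paper uses to get $\Lambda(\mathfrak{I})_\mathfrak{p}\cong\mathcal{O}_\mathfrak{p}^N$. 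With that sentence added, your reduction-mod-$\mathfrak{p}$ argument is a complete, adele-free substitute for the strong-approximation step; the paper's version is shorter and adapts uniformly to corrections at any finite set of places, while yours makes the underlying line geometry over the residue field explicit.
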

\begin{proof}
Fix an isomorphism $\tilde{\alpha}:\mathfrak{fa}^{-1} \xrightarrow{\sim} \Lambda(\mathfrak{I})$. Then the $\mathcal{O}$-lattices $\Lambda_1=\Lambda(\mathfrak{pI})$ and $\Lambda_2=\tilde{\alpha}(\mathfrak{f(aP)}^{-1})$  contain $\Lambda(\mathfrak{I})$ and $\Lambda_i/\Lambda(\mathfrak{I})\simeq \mathcal{O}/\mathfrak{p}$ for $i=1,2$. For each finite place $v$ of $k$ and $i=1,2$ we obtain an $\mathcal{O}_v$-lattice $\Lambda_{i,v}=\Lambda_i \otimes_\mathcal{O} \mathcal{O}_v$ in $k_v^N$, and we have $\Lambda_{1,v}=\Lambda_{2,v}$ for all $v \neq \mathfrak{p}$. Pick $g_\mathfrak{p} \in \mathrm{SL}_N(\Lambda(\mathfrak{I})_\mathfrak{p})$ such that $g_\mathfrak{p}\Lambda_{2,\mathfrak{p}} = \Lambda_{1,\mathfrak{p}}$ and let 
\begin{equation*}
U_\mathfrak{p} = \mathrm{SL}_N(\Lambda(\mathfrak{I})_\mathfrak{p}) \cap \mathrm{SL}_N(\Lambda_{2,\mathfrak{p}}).
\end{equation*}
Then $U_\mathfrak{p}$ is an open compact subgroup of $\mathrm{SL}_N(k_\mathfrak{p})$ and $U=g_\mathfrak{p}U_\mathfrak{p} \times \prod_{v \neq \mathfrak{p}} \mathrm{SL}_N(\Lambda(\mathfrak{I})_v)$ is an open subset of $\mathrm{SL}_N(\mathbb{A}_{k,f})$. Since $\mathrm{SL}_N(k)$ is dense in $\mathrm{SL}_N(\mathbb{A}_{k,f})$, we may find $g \in \mathrm{SL}_N(k) \cap U$. Then $g$ stabilizes $\Lambda(\mathfrak{I})$ and $g\Lambda_2=\Lambda_1$ (since $g\Lambda_{2,v}=\Lambda_{1,v}$ for every finite place $v$), and so $\alpha:=g \circ \tilde{\alpha}$ makes the diagram in the statement commute.

\end{proof}

From now on we fix an isomorphism $\alpha$ as in the above lemma. These choices define:

\begin{itemize}
\item a vector
\begin{equation}
v_0 = \alpha(1) \in k^N;
\end{equation}
\item a $k$-basis $\alpha_j=\alpha^{-1}(e_j)$ ($j=1,\ldots,N$) of $L$ and a matrix 
$$
a_\alpha =(\sigma_i(\alpha_j))^{-1} \det(\sigma_i(\alpha_j))^{1/N} \in \mathrm{SL}_N(\mathbb{C})
$$
(here $\det(\sigma_i(\alpha_j))^{1/N}$ denotes a fixed $N$-th root of $\det(\sigma_i(\alpha_j))$);
\item an inclusion
\begin{equation}
\iota_\alpha:L^\times \rightarrow \mathrm{GL}_N(k)
\end{equation}
sending $l \in L^\times$ to the map $x \mapsto \alpha(l\alpha^{-1}(x))$, that can be described using $a_\alpha$:
\begin{equation}
\iota_\alpha(l)= a_\alpha \mathrm{diag}(\underline{\sigma}(l)) a_\alpha^{-1}.
\end{equation}
Define
$$
\Gamma_1(v_0, \Lambda(\mathfrak{I})) = \left\{ \gamma \in \Gamma(\Lambda(\mathfrak{I})) \ | \ (\gamma-1)v_0 \in \Lambda(\mathfrak{I})  \right\}
$$
and
$$
\Gamma := \Gamma_0(\mathfrak{p},\Lambda(\mathfrak{I})) \cap \Gamma_1(v_0,\Lambda(\mathfrak{I})).
$$
Since multiplication by $u \in U(\mathfrak{f})^1$ induces an $\mathcal{O}$-linear automorphism of $\mathfrak{fa}^{-1}$ and $\mathfrak{f(aP)}^{-1}$ of determinant $1$ that preserves $1+\mathfrak{fa}^{-1}$, the restriction of $\iota_\alpha$ to $U(\mathfrak{f})^1$ defines an inclusion
\begin{equation}
\iota_\alpha:U(\mathfrak{f})^1 \rightarrow \Gamma.
\end{equation}
\item Write $(L \otimes_{k,\sigma} \mathbb{C})^1$ for the elements of $(L \otimes_{k,\sigma} \mathbb{C})^\times$ of norm $1$ and $(L \otimes_{k,\sigma} \mathbb{C})^1_c$ for the maximal compact subgroup of $(L \otimes_{k,\sigma} \mathbb{C})^1$. The map $u \mapsto \iota_\alpha(u)a_\alpha(=a_\alpha \mathrm{diag}(\underline{\sigma}(u)))$ induces an embedding
\begin{equation} \label{eq:def_map_symmetric_spaces_L_to_SL_N}
\iota_\alpha: (L\otimes_{k,\sigma} \mathbb{C})^1 / (L\otimes_{k,\sigma} \mathbb{C})^1_c \to X
\end{equation}
and hence a basepoint $x_\alpha=a_\alpha \mathrm{SU}(N) \in X$ and a map
\begin{equation} \label{eq:def_map_loc_symmetric_spaces_L_to_SL_N}
\iota_\alpha: X(\mathfrak{f}):=U(\mathfrak{f})^1\backslash (L\otimes_{k,\sigma} \mathbb{C})^1 / (L\otimes_{k,\sigma} \mathbb{C})^1_c \to \Gamma \backslash X.
\end{equation}
By Kronecker's theorem (``algebraic integers all of whose conjugates are of norm one are roots of unity''), the kernel of the action of $U(\mathfrak{f})^1$ on $(L\otimes_{k,\sigma} \mathbb{C})^1 / (L\otimes_{k,\sigma} \mathbb{C})^1_c$ equals the torsion subgroup $U(\mathfrak{f})^1_{\mathrm{tors}}$ of $U(\mathfrak{f})^1$, and the action of $U(\mathfrak{f})'=\langle u_1,\ldots,u_{N-1} \rangle \simeq U(\mathfrak{f})^1/U(\mathfrak{f})^1_{\mathrm{tors}}$ on $(L\otimes_{k,\sigma} \mathbb{C})^1 / (L\otimes_{k,\sigma} \mathbb{C})^1_c$ is free.
We fix the orientation on $(L \otimes_{k,\sigma}\mathbb{C})^1/(L \otimes_{k,\sigma} \mathbb{C})^1_c$ associated to the canonical orientation of $\C^N$ and write 
$$
[X(\mathfrak{f})] \in \mathrm{H}_{N-1}(X(\mathfrak{f}),\mathbb{Z}) \simeq \mathrm{H}_{N-1}(\langle u_1,\ldots,u_{N-1}\rangle ,\mathbb{Z})
$$
for the fundamental class of the (compact, oriented) $(N-1)$-manifold $X(\mathfrak{f})$. We write $\mathrm{cor}:\mathrm{H}_*(\langle u_1,\ldots,u_{N-1} \rangle,\mathbb{Q}) \to \mathrm{H}_*(U(\mathfrak{f})^1,\mathbb{Q})$ and $\mathrm{res}: \mathrm{H}_*(U(\mathfrak{f})^1,\mathbb{Q}) \to \mathrm{H}_*(\langle u_1,\ldots,u_{N-1} \rangle,\mathbb{Q})$ for the corestriction and restriction maps respectively and set
$$
Z_{\mathfrak{f}}= [U(\mathfrak{f}):U(\mathfrak{f})']^{-1}  \mathrm{cor}[X(\mathfrak{f})] \in \mathrm{H}_{N-1}(U(\mathfrak{f})^1,\mathbb{Q}).
$$

\item The embeddings $\sigma_1,\ldots,\sigma_N: L \to \mathbb{C}$ give a basis for $(L \otimes_{k,\sigma} \mathbb{C})^\vee$; we denote by $\tfrac{\partial}{\partial \sigma_1},\ldots, \tfrac{\partial}{\partial \sigma_N}$ the dual basis of $(L \otimes_{k,\sigma} \mathbb{C})^{\vee\vee} \simeq L \otimes_{k,\sigma} \mathbb{C}$. Writing $\alpha_\mathbb{C}=\alpha \otimes 1$ for the extension of $\alpha: L \to k^N$ to an isomorphism $L \otimes_{k,\sigma} \mathbb{C} \to V$, we define polynomials
\begin{equation}
\begin{split}
P_\alpha &= \alpha_\mathbb{C}\left( \frac{\partial}{\partial \sigma_1} \right) \cdots \alpha_\mathbb{C}\left( \frac{\partial}{\partial \sigma_N} \right) \in \mathrm{Sym}^N V, \\
\overline{Q_\alpha} &= \overline{n} \circ \alpha_\mathbb{C}^{-1} \in \mathrm{Sym}^N \overline{V}^\vee.
\end{split}
\end{equation}
Note that these polynomials satisfy
\begin{equation} \label{eq:P_and_Q_alpha_property}
a_\alpha^{-1}P_\alpha = \det(\sigma_i(\alpha_j))^{-1} (e_1 \cdots e_N), \qquad \overline{a_{\alpha}^{-1}Q_\alpha} = \overline{\det(\sigma_i(\alpha_j))} \cdot \overline{z_1\cdots z_N}.
\end{equation}
For non-negative integers $p,q$, we define
$$
P_{\alpha}^{p,q} =  p!^{-N}  \cdot P_\alpha^p \otimes \overline{Q_\alpha}^q \in (V^{pN,qN})^\vee.
$$
Then $P_\alpha^{p,q}$ is invariant under $\iota_\alpha(U(\mathfrak{f})^1)$.
We define
$$
Z_\mathfrak{f}^{p,q} = Z_\mathfrak{f} \otimes P_\alpha^{p,q} \in  \mathrm{H}_{N-1}(U(\mathfrak{f})^1,(V^{pN,qN})^\vee).
$$
\end{itemize}

Let $\mathrm{res}(E_\mathfrak{p}(v_0;\psi^{pN,qN}, \Lambda(\mathfrak{I}))) \in \mathrm{H}^{N-1}(U(\mathfrak{f})^1,V^{pN,qN})$  be the cohomology class defined by the restriction of the closed form $E_\mathfrak{p}(v_0;\psi^{pN,qN}, \Lambda(\mathfrak{I}))$ and define
\begin{equation}
\begin{split}
\langle E_\mathfrak{p}(v_0;\psi^{pN,qN},\Lambda(\mathfrak{I})),Z_\mathfrak{f}^{p,q} \rangle &= \mathrm{res}(E_\mathfrak{p}(v_0; \psi^{pN,qN}, \Lambda(\mathfrak{I}))) \cap Z_\mathfrak{f}^{p,q} \\
&=[U(\mathfrak{f}):U(\mathfrak{f})']^{-1}  \left. \int_{X(\mathfrak{f})} \iota_\alpha^*E_\mathfrak{p}(v_0,P_\alpha^{p,q};\psi^{pN,qN},\Lambda(\mathfrak{I}),s) \right|_{s=0}.
\end{split}
\end{equation}

\subsection{Partial zeta functions} Given integers $p,q \geq 0$, define the partial zeta function
\begin{equation}
\zeta^{p,q}_\mathfrak{f}(\mathfrak{a},s) = \sideset{}{'}\sum_{x \in U(\mathfrak{f})\backslash 1+\mathfrak{fa}^{-1}} \frac{\overline{n(x)}^q}{n(x)^{p+1} |n(x)|^{2s}}, \qquad \mathrm{Re}(s) \gg 0.
\end{equation}
(Since $u\overline{u}=1$ for every $u \in \mathcal{O}^\times$, this is well-defined provided that $p+q+1$ is divisible by the order of the subgroup $n(U(\mathfrak{f}))$ of $\mathcal{O}^\times$, which we assume.) Define also the `$\mathfrak{P}$-smoothed' partial zeta function
\begin{equation}
\zeta^{p,q}_{\mathfrak{f},\mathfrak{P}}(\mathfrak{a},s)= \mathrm{N}\mathfrak{P}^{-s} \zeta^{p,q}_\mathfrak{f}(\mathfrak{aP},s)-\mathrm{N}\mathfrak{P}^{1-s} \zeta^{p,q}_\mathfrak{f}(\mathfrak{a},s).
\end{equation}
These partial zeta functions admit meromorphic continuation to $s \in \mathbb{C}$ that is regular at $s=0$.

\begin{proposition} \label{prop:Eisenstein_zeta_value}
$$
\langle E_\mathfrak{p}(v_0; \psi^{pN,qN}, \Lambda(\mathfrak{I})),Z_\mathfrak{f}^{p,q} \rangle = \det(\sigma_i(\alpha_j)) \zeta^{p,q}_{\mathfrak{f},\mathfrak{P}}(\mathfrak{a},0).
$$
\end{proposition}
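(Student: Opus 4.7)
The plan is to unfold the Eisenstein series defining the left-hand side into a sum of single-lattice-point integrals over the diagonal torus $T/T\cap K$, and then to apply \autoref{lemma:eta_integration} to identify the resulting values with the smoothed partial zeta function.

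First, we expand
\[
E_\mathfrak{p}(v_0,P_\alpha^{p,q};\psi^{pN,qN},\Lambda(\mathfrak{I}),s) = \sum_{\lambda \in \Lambda(\mathfrak{pI})} \eta(v_0+\lambda,P_\alpha^{p,q},s) - \mathrm{N}\mathfrak{p} \sum_{\lambda \in \Lambda(\mathfrak{I})} \eta(v_0+\lambda,P_\alpha^{p,q},s)
\]
and use \autoref{L41} to identify $v_0+\Lambda(\mathfrak{I}) = \alpha(1+\mathfrak{fa}^{-1})$ and $v_0+\Lambda(\mathfrak{pI}) = \alpha(1+\mathfrak{f(aP)}^{-1})$, so that both sums are indexed by elements $y$ of these translates via $y \mapsto \alpha(y)$. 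The $G$-invariance of $\eta$, the identity $\iota_\alpha(u)\alpha(y) = \alpha(uy)$ for $u \in U(\mathfrak{f})^1$, and the $\iota_\alpha(U(\mathfrak{f})^1)$-invariance of $P_\alpha^{p,q}$ then imply that the pulled-back form $\iota_\alpha^*\eta(\alpha(y),P_\alpha^{p,q},s)$ on $Y := (L\otimes_{k,\sigma}\mathbb{C})^1/(L\otimes_{k,\sigma}\mathbb{C})^1_c$ transforms correctly under $U(\mathfrak{f})^1$. A standard unfolding converts the integral over $X(\mathfrak{f}) = U(\mathfrak{f})^1 \backslash Y$ of the full Eisenstein series into $|U(\mathfrak{f})^1_\mathrm{tors}|$ times a sum over $U(\mathfrak{f})^1$-orbit representatives of $1+\mathfrak{fa}^{-1}$ (respectively $1+\mathfrak{f(aP)}^{-1}$) of integrals over all of $Y$; the extra factor appears because $U(\mathfrak{f})^1_\mathrm{tors}$ acts trivially on $Y$ but freely on these orbits. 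Absolute convergence for $\mathrm{Re}(s)\gg 0$ justifies commuting the unfolding with analytic continuation in $s$.

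To evaluate $\int_Y \iota_\alpha^*\eta(\alpha(y),P_\alpha^{p,q},s)$ we note that the map $l \mapsto \iota_\alpha(l)a_\alpha K = a_\alpha\,\mathrm{diag}(\underline{\sigma}(l))K$ identifies $Y$ with the $a_\alpha$-translate of $T/T\cap K$, so another use of $G$-invariance reduces the integral to $\int_{T/T\cap K}\eta(a_\alpha^{-1}\alpha(y),a_\alpha^{-1}P_\alpha^{p,q},s)$. A direct matrix computation gives $a_\alpha^{-1}\alpha(y) = \det(\sigma_i(\alpha_j))^{-1/N}\underline{\sigma}(y)$, while \eqref{eq:P_and_Q_alpha_property} expresses $a_\alpha^{-1}P_\alpha^{p,q}$ explicitly as a known scalar multiple of the monomial $(z_1\cdots z_N)^p\otimes \overline{(z_1\cdots z_N)}^q$. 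Applying \autoref{lemma:eta_integration} with $I=(p,\ldots,p)$ and $J=(q,\ldots,q)$ and specialising at $s=0$, the factor $\Gamma(p+1)^N = (p!)^N$ cancels the $(p!)^{-N}$ built into the definition of $P_\alpha^{p,q}$, and the various powers of $\det(\sigma_i(\alpha_j))$ and $\overline{\det(\sigma_i(\alpha_j))}$ telescope down to a single factor $\det(\sigma_i(\alpha_j))$, with the $y$-dependence collapsing cleanly to $\overline{n(y)}^q/n(y)^{p+1}$.

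Finally one assembles the pieces. The equality $[U(\mathfrak{f}):U(\mathfrak{f})'] = [U(\mathfrak{f}):U(\mathfrak{f})^1]\cdot |U(\mathfrak{f})^1_\mathrm{tors}|$ combines the factor $|U(\mathfrak{f})^1_\mathrm{tors}|$ from unfolding with the prefactor $[U(\mathfrak{f}):U(\mathfrak{f})']^{-1}$ in the definition of the pairing, leaving $[U(\mathfrak{f}):U(\mathfrak{f})^1]^{-1}$. The divisibility hypothesis on $p+q+1$ combined with $|n(u)|=1$ for $u \in \mathcal{O}^\times$ makes $\overline{n(y)}^q/n(y)^{p+1}$ invariant under the full group $U(\mathfrak{f})$, so the sum over $U(\mathfrak{f})^1$-orbits absorbs the remaining $[U(\mathfrak{f}):U(\mathfrak{f})^1]$ and produces $\det(\sigma_i(\alpha_j))\zeta_\mathfrak{f}^{p,q}(\mathfrak{a},0)$ from the $\Lambda(\mathfrak{I})$-sum and $\det(\sigma_i(\alpha_j))\zeta_\mathfrak{f}^{p,q}(\mathfrak{aP},0)$ from the $\Lambda(\mathfrak{pI})$-sum. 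Subtracting $\mathrm{N}\mathfrak{p}=\mathrm{N}\mathfrak{P}$ times the first from the second yields the claimed value $\det(\sigma_i(\alpha_j))\zeta^{p,q}_{\mathfrak{f},\mathfrak{P}}(\mathfrak{a},0)$. The most delicate point will be precisely this careful index bookkeeping among the three nested subgroups $U(\mathfrak{f}) \supset U(\mathfrak{f})^1 \supset U(\mathfrak{f})'$, together with checking that the orientation on $Y$ (induced from that of $\mathbb{C}^N$) agrees with the orientation on $T/T\cap K$ fixed in \S \ref{subsection:Integral_on_max_torus}, so that no spurious sign appears.
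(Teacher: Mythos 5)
Your plan matches the paper's proof essentially step for step: expand $E_\mathfrak{p}$ over $v_0+\Lambda(\mathfrak{I})=\alpha(1+\mathfrak{fa}^{-1})$ (and its $\mathfrak{pI}$-analogue), unfold the integral over $X(\mathfrak{f})$, reduce the remaining integral over the image of $\iota_\alpha$ to $T/T\cap K$ via $a_\alpha$ and \eqref{eq:P_and_Q_alpha_property}, and apply \autoref{lemma:eta_integration} at $s=0$, with the determinant powers and the $(p!)^{\pm N}$ cancelling exactly as you say. The only cosmetic difference is that the paper unfolds directly over the free group $U(\mathfrak{f})'$ (so no torsion factor ever appears and the prefactor $[U(\mathfrak{f}):U(\mathfrak{f})']^{-1}$ is absorbed in one step), whereas you unfold over $U(\mathfrak{f})^1$-orbits and reinsert $|U(\mathfrak{f})^1_{\mathrm{tors}}|$; the bookkeeping is equivalent.
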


\begin{proof}
For $s$ in the range of convergence of the Eisenstein series, we compute
\begin{multline}
\int_{X(\mathfrak{f})}  \iota_\alpha^*E(v_0,P_\alpha^{p,q};\psi^{pN,qN},\Lambda(\mathfrak{I}),s) \\
\begin{split}
&= \int_{U(\mathfrak{f})' \backslash (L\otimes_{k,\sigma} \mathbb{C})^1 / (L\otimes_{k,\sigma} \mathbb{C})^1_c } \iota_\alpha^*\left(\sum_{v \in v_0 + \Lambda(\mathfrak{I})} \eta^{pN,qN} (v,P_\alpha^{p,q},s) \right) \\
&= \int_{U(\mathfrak{f})' \backslash (L\otimes_{k,\sigma} \mathbb{C})^1 / (L\otimes_{k,\sigma} \mathbb{C})^1_c } \iota_\alpha^*\left(\sum_{x \in 1+\mathfrak{fa}^{-1}} \eta^{pN,qN} (\alpha(x),P_\alpha^{p,q},s) \right) \\
&= \int_{U(\mathfrak{f})' \backslash (L\otimes_{k,\sigma} \mathbb{C})^1 / (L\otimes_{k,\sigma} \mathbb{C})^1_c } \iota_\alpha^*\left(\sum_{x \in U(\mathfrak{f})' \backslash 1+\mathfrak{fa}^{-1}} \sum_{u \in U(\mathfrak{f})'} \eta^{pN,qN} (\alpha(ux),P_\alpha^{p,q},s) \right) \\
&= \sum_{x \in U(\mathfrak{f})' \backslash 1+\mathfrak{fa}^{-1}} \int_{(L\otimes_{k,\sigma} \mathbb{C})^1 / (L\otimes_{k,\sigma} \mathbb{C})^1_c } \iota_\alpha^* \eta^{pN,qN} (\alpha(x),P_\alpha^{p,q},s).
\end{split}
\end{multline}
Writing $T$ for the torus of diagonal matrices in $G$, note that the image of $\iota_\alpha$ is identified with the translate $a_\alpha(T/T\cap K) \subset X$. Since $(a_\alpha^{-1}v)_i = \det(\sigma_i(\alpha_j))^{-1/N} \sigma_i(\alpha^{-1}(v))$, using \eqref{eq:P_and_Q_alpha_property} and \autoref{lemma:eta_integration} and writing $\Delta = \det(\sigma_i(\alpha_j))^{-1/N}$ we compute
\begin{multline*} 
\int_{(L\otimes_{k,\sigma} \mathbb{C})^1 / (L\otimes_{k,\sigma} \mathbb{C})^1_c }  \iota_\alpha^* \eta^{pN,qN} (\alpha(x),P_\alpha^{p,q},s) \\ \begin{split}
& = \int_{T/T \cap K} \eta^{pN,qN} (a_\alpha^{-1}\alpha(x),a_\alpha^{-1}(P_\alpha^{p,q}),s) \\
& = p!^{-N} \int_{T/T\cap K} \eta^{pN,qN} (\Delta \underline{\sigma}(x),\Delta^{pN} (e_1 \cdots e_N)^p \otimes \overline{\Delta}^{-qN} \overline{z_1\cdots z_N}^q,s) \\
& = p!^{-N} \Delta^{pN} \overline{\Delta}^{-qN} \int_{T/T\cap K} \eta^{pN,qN} (\Delta \underline{\sigma}(x),(e_1 \cdots e_N)^p \otimes \overline{z_1\cdots z_N}^q,s) \\
& = \Delta^{pN} \overline{\Delta}^{-qN} p!^{-N} \Gamma(\frac{s}{2N}+1+p)^N \prod_{k=1}^N \frac{(\overline{\Delta \sigma_k(x)})^q}{|\Delta \sigma_k(x)|^{s/N} (\Delta \sigma_k(x))^{p+1}} \\
& = |\Delta|^{-s} \Delta^{-N} p!^{-N} \Gamma(\frac{s}{2N}+1+p)^N \frac{\overline{n(x)^q}}{|n(x)|^{s/N} n(x)^{p+1}}
\end{split}
\end{multline*}
and the statement follows.
\end{proof}

\subsection{Moving cycles to the Tits boundary} We now give a fundamental domain $\mathcal{D}$ for the action of $\langle u_1, \ldots, u_{N-1} \rangle$ on the image of the map $\iota_\alpha$ defined in \eqref{eq:def_map_symmetric_spaces_L_to_SL_N}, and a decomposition of $\mathcal{D}$ into $(N-1)$-simplices indexed by the symmetric group $S_{N-1}$.

\subsubsection{Simplices} Let us first define the relevant simplices. For $k \geq 0$, $x \in X$ and $\underline{\gamma}=(\gamma_0,\ldots,\gamma_k) \in \Gamma_0(\mathfrak{p},\Lambda(\mathfrak{I}))^{k+1}$, we define a continuous map
$$
\tilde{\Delta}(\underline{\gamma},x) : |\Delta_k| \to X
$$
inductively on $k$ as follows:
\begin{itemize}
\item[-] For $k=0$ we have $\Delta_k=\{*\}$ and we set $\tilde{\Delta}(\gamma_0,x)(*)=\gamma_0 x$.

\item[-] Assume that $k \geq 1$ and that we have defined $\tilde{\Delta}(\gamma_0,\ldots,\gamma_{k-1},x)$ for every collection of elements $\gamma_0,\ldots,\gamma_{k-1} \in \Gamma_0(\mathfrak{p},\Lambda(\mathfrak{I}))$. We define $\tilde{\Delta}(\gamma_0,\ldots,\gamma_k,x)$ to be the cone on $\tilde{\Delta}(\gamma_0,\ldots,\gamma_{k-1},x)$ with vertex $\gamma_k x$ (we orient this cone by declaring that its vertices $\gamma_0 x, \ldots, \gamma_k x$ are in increasing order).
\end{itemize}

By induction on $k$ one shows that
\begin{equation} \label{eq:simplex_equivariance_1}
\tilde{\Delta}(\gamma'\gamma_0,\ldots,\gamma' \gamma_{k-1},x) = \gamma'\tilde{\Delta}(\gamma_0,\ldots,\gamma_{k-1},x), \qquad \text{ for } \gamma' \in \Gamma_0(\mathfrak{p},\Lambda(\mathfrak{I})).
\end{equation}

\subsubsection{Fundamental domain} Consider now the fundamental domain $\mathcal{D}$ for the action of $\langle u_1, \ldots, u_{N-1} \rangle$ on the image of the map $\iota_\alpha$ defined as follows: for $\underline{t} \in [0,1]^{N-1}$, let 
$$
\sigma(\underline{u})(\underline{t}) = (\sigma_1(u_1)^{t_1}\cdots \sigma_1(u_{N-1})^{t_{N-1}}, \ldots, \sigma_N(u_1)^{t_1}\cdots \sigma_N(u_{N-1})^{t_{N-1}}) \in (\mathbb{C}^\times)^{N}
$$
and let
$$
\mathcal{D} = \{ a_\alpha \sigma(\underline{u})(\underline{t})K \ | \ \underline{t} \in [0,1]^{N-1} \} \subset X.
$$
There is a standard decomposition of $[0,1]^{N-1}$ into $(N-1)$-simplices:
$$
[0,1]^{N-1} = \bigcup_{\sigma \in S_{N-1}} \{ (t_1,\ldots,t_{N-1})  \in [0,1]^{N-1} \ | \ t_{\sigma(1)} \leq \cdots \leq t_{\sigma(N-1)} \}.
$$
This induces a corresponding simplicial decomposition of $\mathcal{D}$: writing $U_i=\iota_\alpha(u_i) \in \Gamma_0(\mathfrak{p},\Lambda(\mathfrak{I}))$ and 
\begin{equation} \label{eq:def_u_sigma}
\underline{u}_\sigma = (1,U_{\sigma(1)},U_{\sigma(1)}U_{\sigma(2)},\ldots,U_{\sigma(1)} \cdots U_{\sigma(N-1)}),
\end{equation}
we have
\begin{equation} \label{eq:fund_domain_decomposition}
\mathcal{D} = \sum_{\sigma \in S_{N-1}} \mathrm{sgn}(\sigma) \tilde{\Delta}(\underline{u}_\sigma,x_\alpha).
\end{equation}

\subsubsection{Deforming $\tilde{\Delta}(\underline{\gamma},x)$} 
Let $k \geq 0$, $x \in X$ and $\underline{\gamma} \in \Gamma_0(\mathfrak{p},\Lambda(\mathfrak{I}))^{k+1}$ with $k < N$. In this paragraph we define a homotopy between the simplices $\tilde{\Delta}(\underline{\gamma},x)$ and $\Delta(\underline{\gamma})$; that is, a map
$$
H(\underline{\gamma},x) : | \Delta_{k} | \times [0,1] \to {_\mathbb{Q}}\overline{X}^T
$$
such that
\begin{equation} \label{eq:H_homotopy_property}
\begin{split}
H(\underline{\gamma},x)|_{|\Delta_{k} | \times \{0\}} &= \tilde{\Delta}(\underline{\gamma},x) \\
H(\underline{\gamma},x)|_{|\Delta_{k} | \times \{1\}} &= \Delta(\underline{\gamma})
\end{split}
\end{equation}
Since the cones in $X$ depend on the order of the vertices we try to define this homotopy with a bit of care. 

Moreover, we will show that $H$ can be covered by a finite number of Siegel sets attached to good cusps (recall that we say that a cusp corresponding to a rational flag $W_\bullet$ is good if we can find $\gamma \in \Gamma_0(\mathfrak{p},\Lambda(\mathfrak{I}))$ such that $\gamma e_1 \in W_0$) and has the equivariance property
\begin{equation} \label{eq:homotopy_equivariance_1}
H(\gamma'\gamma_0,\ldots,\gamma' \gamma_k,x) = \gamma' H(\gamma_0,\ldots,\gamma_k,x), \qquad \text{ for } \gamma' \in \Gamma_0(\mathfrak{p},\Lambda(\mathfrak{I})).
\end{equation}

To define $H$ we use a decomposition of $|\Delta_k| \times [0,1]$ defined inductively as follows. For $k=0$ we take the decomposition of $\{*\} \times [0,1] \simeq [0,1]$ with $0$-simplices $\{0\}$ and $\{1\}$ and the $1$-simplex $(0,1)$. The decomposition of $|\Delta_{k}| \times [0,1]$ is defined inductively on $k$ by joining every simplex of $(\Delta_{k-1} \times \{0\}) \cup (\partial \Delta_k \times [0,1])$ with the barycenter of $\Delta_k \times \{1\}$, as in the following figure. 

\begin{center}
\includegraphics[width=0.5\textwidth]{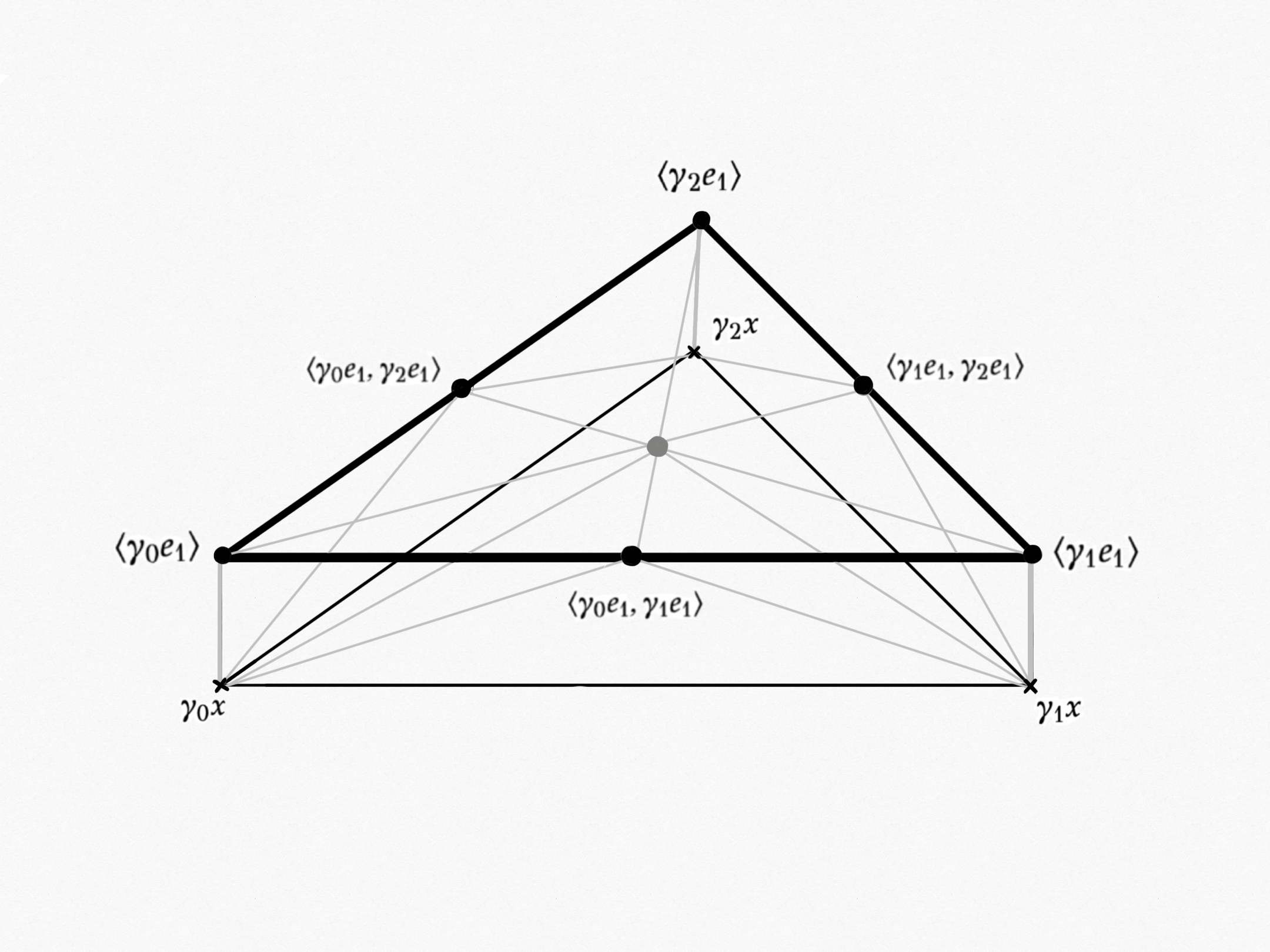}
\end{center}

More precisely, we define, inductively on $k$,  a subset $S_k$ of $\Delta_k \times \Delta_k '$ satisfying
\begin{equation} \label{eq:simplicial_decomposition_1}
|\Delta_k| \times |[0,1]| = \bigsqcup_{(s,s') \in S_k}^\circ C(|s| \times \{0\}, |s'| \times \{1\}),
\end{equation}
(here the symbol $\bigsqcup^\circ$ denotes an almost disjoint union: the cones indexed by different pairs in $S_k$ have disjoint interiors). We define $S_k$ as follows:
\begin{itemize}
\item[-] For $k=0$ we have $\Delta_0=\{*\}$ and we set $S_0=\Delta_0 \times \Delta_0$.

\item[-] Let $k>0$ and assume that $S_{k-1}$ has been defined. Let $x_0$ be the barycenter of $\Delta_k$. To describe which pairs $(s,s')$ belong to $S_k$, recall that every simplex $s' \in \Delta_k'$ either i) is the $0$-simplex $\{x_0\}$, or ii) lies on a face of $\partial \Delta_k$, or iii) is the cone $C(\{x_0\},s'')$ with vertex $x_0$ for a unique simplex $s''$ of $\Delta_k '$ contained in the boundary $\partial \Delta_k$. In case i) we declare that for every $s \in \Delta_k$ we have $(s,\{x_0\}) \in S_k$. In case ii), the vertex $s'$ lies on a face $\Delta_{k-1} \subset \partial \Delta_k$. We declare that $(s,s') \in S_k$ if and only if $s$ belongs to the same face of $\partial \Delta_k$ as $s'$ and $(s,s') \in S_{k-1}$. In case iii), we declare that $(s,s') \in S_k$ with $s'=C(\{x_0\},s'')$ if and only if $s$ and $s''$ belong to the same face of $\partial \Delta_k$ and $(s,s'') \in S_{k-1}$. Property \eqref{eq:simplicial_decomposition_1} follows by induction on $k$.
\end{itemize}

With this decomposition of $|\Delta_k| \times |[0,1]|$ in hand, we can now define $H$ by induction on $k$. For $k=0$ we recall the definition of $s(x,x'):[0,1] \to {_\mathbb{Q}}\overline{X}^T$ (see \eqref{eq:s_definition_1} and \eqref{eq:s_definition_2}). Writing $\langle \gamma_0 e_1 \rangle$ for the point of the boundary of $_{\mathbb{Q}}\overline{X}^T$ corresponding to the flag given by the line $\langle \gamma_0 e_1 \rangle$, we set
$$
H(\gamma_0,x) = s(\gamma_0 x, \langle \gamma_0 e_1\rangle).
$$
Note that the image of $H(\gamma_0,x)$ is the cone $C(\{\gamma_0 x\},\{\langle \gamma_0 e_1 \rangle \})$.

Next assume that $k \geq 1$ and that we have defined $H(\gamma_0,\ldots,\gamma_{k-1},x)$ for every collection of elements $\gamma_0,\ldots,\gamma_{k-1} \in \Gamma_0(\mathfrak{p},\Lambda(\mathfrak{I}))$. Let $\underline{\gamma}=(\gamma_0,\ldots,\gamma_k) \in \Gamma_0(\mathfrak{p},\Lambda(\mathfrak{I}))^{k+1}$. Assume first that $\langle \gamma_0 e_1,\ldots, \gamma_k e_1 \rangle \neq k^N$. Then $\Delta(\underline{\gamma})$ corresponds to a simplex in $\Delta_\mathbb{Q}(\mathbf{G})$. We define $H(\underline{\gamma},x)$ using the decomposition \eqref{eq:simplicial_decomposition_1} by taking the restriction of $H(\underline{\gamma},x)$ to $C(|s| \times \{0\}, |s'| \times \{1\})$ to be the simplicial map \eqref{eq:cones_as_simplicial_maps} whose image is the cone $C(\tilde{\Delta}(\underline{\gamma},x)(s),\Delta(\underline{\gamma})(s'))$.

Now assume that $\langle \gamma_0 e_1,\ldots, \gamma_k e_1 \rangle = k^N$ (and hence that $k+1=N$). Given $(s,s') \in S_{N-1}$, we define the restriction of $H(\underline{\gamma},x)$ to $C(|s| \times \{0\}, |s'|\times \{1\})$ to be the simplicial map \eqref{eq:cones_as_simplicial_maps} whose image is the cone defined as follows:
\begin{itemize}
\item[-] If $s'$ is the barycenter $x_0$ of $\Delta_{N-1}$, take the cone to be $C(\tilde{\Delta}(\underline{\gamma},x)(s),\{x_0(\underline{\gamma})\})$ (recall that $x_0(\underline{\gamma}) \in X$ denotes the barycenter of the modular symbol $\Delta(\underline{\gamma})$);

\item[-] If $s'$ belongs to the boundary $\partial \Delta_{N-1}$, then $s$ and $s'$ belong to the same face of $\partial \Delta_{N-1}$, and the restriction of $H(\underline{\gamma},x)$ to $C(|s| \times \{0\}, |s'|\times \{1\})$ has already been defined to be the map whose image is the cone $C(\tilde{\Delta}(\underline{\gamma},x)(s),\Delta(\underline{\gamma})(s'))$;

\item[-] In the remaining case we have $s'=C(\{x_0\},s'')$ for a unique simplex $s'' \in \partial \Delta_{N-1}$. In this case we form the cone $C':=C(\tilde{\Delta}(\underline{\gamma},x)(s),\{x_0(\underline{\gamma})\}) \subset X$ and take the cone to be $C(C',\Delta(\underline{\gamma})(s''))$.
\end{itemize}

By induction on $k$ one shows that $H(\underline{\gamma},x)$ is well-defined and continuous\footnote{This latter statement can be deduced from the following general principle: let $X$ and $Y$ two topological spaces, $(F_i )_{i \in I}$ a finite cover of $X$ by closed sets, and $f_i : F_i \to Y$ continuous maps. If $f_i$ and $f_j$ coincides on $F_i \cap F_j$ for all $i,j$, then there exists a (unique) continuous map 
$f : X \to Y$ that is equal to $f_i$ on $F_i$ for each $i$.} and satisfies \eqref{eq:H_homotopy_property} and \eqref{eq:homotopy_equivariance_1}. Note that the image of $H(\underline{\gamma},x)$ is given by a finite union of cones of the form $C(S,S')$, where $S$ is a compact subset of $X$ and $S'$ is a simplex in the boundary of ${_\mathbb{Q}}\overline{X}^T$ corresponding to a good cusp; it follows that the image of $H(\underline{\gamma},x)$ can be covered by finitely many Siegel sets attached to these cusps.

\subsection{Smoothing and evaluation}

We can  use the above results to express values of partial zeta functions as polynomials in Kronecker--Eisenstein series, by using the fact that the Eisenstein series $E_\mathfrak{p}(v_0;\psi,\Lambda(\mathfrak{I}))$ is closed and moving the simplices in \eqref{eq:fund_domain_decomposition} to the Tits boundary. In order to guarantee that the Eisenstein series is rapidly decreasing, we will use the following Lemma due to Colmez--Schneps \cite[Lemma 5]{ColmezSchneps}. In its statement we write $v_{\tilde{\mathfrak{P}}}$ for the valuation defined by a prime ideal $\tilde{\mathfrak{P}}$ of $\mathcal{O}_L$ and denote by $S_{L/k}$ the set of all non--zero prime ideals $\tilde{\mathfrak{P}}$ of $\mathcal{O}_L$ such that the residue field $\mathcal{O}_L/\tilde{\mathfrak{P}}$ has degree one over $\mathcal{O}/(\tilde{\mathfrak{P}} \cap \mathcal{O})$.

\begin{lemma} \label{lemma:Colmez_Schneps}
Let $\{\phi_i\}_{i \in I}$ be a finite collection of non-zero $k$-linear forms on $L$. There exists a constant $C$ such that if $\tilde{\mathfrak{P}} \in S_{L/k}$ satisfies  $\mathrm{N}\tilde{\mathfrak{P}} > C$ and $l \in L$ satisfies $v_{\tilde{\mathfrak{P}}}(l)<0$ and $v_{\mathfrak{P}'}(l) \geq 0$ for every other prime divisor $\mathfrak{P}'$ of $(\tilde{\mathfrak{P}} \cap \mathcal{O}) \mathcal{O}_L$, then $\phi_i(l) \neq 0$ for every $i \in I$.

In particular, if $\tilde{\mathfrak{P}} \in S_{L/k}$ satisfies $\mathrm{N}\tilde{\mathfrak{P}} > C$, $\mathfrak{a}$ is a fractional ideal of $L$ coprime to $(\tilde{\mathfrak{P}} \cap \mathcal{O}) \mathcal{O}_L$ and $l \in \mathfrak{a\tilde{P}}^{-1}-\mathfrak{a}$, then the forms $\phi_i$ are all non-vanishing on the coset $l+\mathfrak{a}$.
\end{lemma}
\begin{proof}
We can write $\phi_i(l)=\mathrm{tr}_{L/k}(l_i l)$ for unique $l_i \in L^\times$. Take $C$ so that $\mathrm{N}\tilde{\mathfrak{P}} >C$ implies that $\tilde{\mathfrak{p}}=\tilde{\mathfrak{P}} \cap \mathcal{O}$ is unramified in $L$ and for every prime divisor $\mathfrak{P}'$ of $\tilde{\mathfrak{p}}\mathcal{O}_L$ we have $v_{\mathfrak{P}'}(l_i)=0$ for all $i$. For $i \in I$ and $l$ as in the statement, we have $v_{\tilde{\mathfrak{P}}}(l_i l)<0$ and $v_{\mathfrak{P}'}(l_i l) \geq 0$ for every other prime divisor $\mathfrak{P}'$ of $\tilde{\mathfrak{p}}\mathcal{O}_L$. This implies (\cite[II \S  3, Cor. 2]{SerreCorpsLocaux}) that $\mathrm{tr}_{L/k}(l_i l)$ is not a $\tilde{\mathfrak{p}}$-integer, and hence is not zero.
\end{proof}

For a prime ideal $\tilde{\mathfrak{P}}$ of $\mathcal{O}_L$ coprime to $\mathfrak{f}$, $\mathfrak{a}$ and $\mathfrak{P}$, define the `$(\mathfrak{P},\tilde{\mathfrak{P}})$-smoothed' zeta function
$$
\zeta^{p,q}_{\mathfrak{f},\mathfrak{P},\tilde{\mathfrak{P}}}(\mathfrak{a},s)=\mathrm{N}\tilde{\mathfrak{P}}^{-s} \zeta^{p,q}_\mathfrak{f,\mathfrak{P}}(\mathfrak{a\tilde{P}},s)-\mathrm{N}\tilde{\mathfrak{P}}^{-s} \zeta^{p,q}_{\mathfrak{f},\mathfrak{P}}(\mathfrak{a},s).
$$
The following theorem implies \autoref{Theorem2Intro} of the introduction. 

\begin{theorem} \label{Tfinal}
There exists a constant $C$ such that if $\tilde{\mathfrak{P}}$ is a prime ideal of $\mathcal{O}_L$ such that the residue field $\mathcal{O}_L/\tilde{\mathfrak{P}}$ has degree one over $\mathcal{O}/\tilde{\mathfrak{p}}$ and $\mathrm{N}\tilde{\mathfrak{P}} >C$, then
$$
\det(\sigma_i(\alpha_j)) \zeta^{p,q}_{\mathfrak{f},\mathfrak{P},\tilde{\mathfrak{P}}}(\mathfrak{a},0) = [U(\mathfrak{f}):U(\mathfrak{f})']^{-1} \sum_{\sigma \in S_{N-1}} \mathrm{sgn}(\sigma) \sum_{\substack{l \in \tilde{\mathfrak{P}}^{-1} \mathfrak{f} /\mathfrak{f} \\ l \neq 0}} \mathbf{\Phi}^{pN,qN}_\mathfrak{p}(v_0+\alpha(l),\underline{u}_\sigma,\Lambda(\mathfrak{I}))(P_\alpha^{p,q}).
$$
\end{theorem}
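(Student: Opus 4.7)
The plan is to combine \autoref{prop:Eisenstein_zeta_value} with the fundamental-domain decomposition \eqref{eq:fund_domain_decomposition} of $\mathcal{D}$, and then to homotope each simplex $\tilde{\Delta}(\underline{u}_\sigma, x_\alpha)$ to the modular symbol $\Delta(\underline{u}_\sigma)$ via the homotopy $H(\underline{u}_\sigma, x_\alpha)$ using Stokes' theorem. The smoothing by $\tilde{\mathfrak{P}}$, together with \autoref{lemma:Colmez_Schneps}, is precisely what guarantees that the resulting Eisenstein form decreases rapidly along the homotopy so that Stokes applies.

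First I would re-interpret the $\tilde{\mathfrak{P}}$-smoothing at the level of Eisenstein series. With the same isomorphism $\alpha$ from \autoref{L41} for the ideal $\mathfrak{a}$, the image $\Lambda' := \alpha(\mathfrak{f}\mathfrak{a}^{-1}\tilde{\mathfrak{P}}^{-1})$ is a lattice containing $\Lambda(\mathfrak{I})$ whose quotient is isomorphic to $\tilde{\mathfrak{P}}^{-1}\mathfrak{f}/\mathfrak{f}$; partitioning $\Lambda'$ into $\Lambda(\mathfrak{I})$-cosets gives
\[
E_\mathfrak{p}(v_0;\psi^{pN,qN},\Lambda') - E_\mathfrak{p}(v_0;\psi^{pN,qN},\Lambda(\mathfrak{I})) = \sum_{\substack{x \in \tilde{\mathfrak{P}}^{-1}\mathfrak{f}/\mathfrak{f} \\ x \neq 0}} E_\mathfrak{p}(v_0+\alpha(x);\psi^{pN,qN},\Lambda(\mathfrak{I})).
\]
Applying \autoref{prop:Eisenstein_zeta_value} with $\mathfrak{a}$ replaced by $\mathfrak{a}\tilde{\mathfrak{P}}$---but keeping the same $\alpha$, so that $\Lambda'$ now plays the role of $\Lambda(\mathfrak{I})$ in the formula---and subtracting the original statement, I would obtain
\[
\det(\sigma_i(\alpha_j))\,\zeta^{p,q}_{\mathfrak{f},\mathfrak{P},\tilde{\mathfrak{P}}}(\mathfrak{a},0) = [U(\mathfrak{f}):U(\mathfrak{f})']^{-1} \sum_{\substack{x \in \tilde{\mathfrak{P}}^{-1}\mathfrak{f}/\mathfrak{f} \\ x \neq 0}} \int_{\mathcal{D}} E_\mathfrak{p}(v_0+\alpha(x);\psi^{pN,qN},\Lambda(\mathfrak{I}))(P_\alpha^{p,q}).
\]

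Next, I would use the simplicial decomposition \eqref{eq:fund_domain_decomposition} of $\mathcal{D}$ as the signed alternating sum of the $\tilde{\Delta}(\underline{u}_\sigma, x_\alpha)$ and replace each by $\Delta(\underline{u}_\sigma)$ via the homotopy $H(\underline{u}_\sigma, x_\alpha)$. Since $E_\mathfrak{p}(v_0+\alpha(x);\psi^{pN,qN},\Lambda(\mathfrak{I}))$ is closed, Stokes' theorem gives
\[
\int_{\tilde{\Delta}(\underline{u}_\sigma, x_\alpha)} = \int_{\Delta(\underline{u}_\sigma)} \pm \int_{H(\underline{u}_\sigma,x_\alpha)|_{\partial \Delta_{N-1}\times[0,1]}}.
\]
The alternating sum over $\sigma \in S_{N-1}$ of the side-cylinder contributions vanishes: it is the usual cancellation---already familiar from $\mathcal{D}$ itself---of faces paired by the $U(\mathfrak{f})'$-action, organised by the equivariance \eqref{eq:homotopy_equivariance_1} of $H$ and the invariance \eqref{eq:Eis_series_invariance_property} of $E_\mathfrak{p}$ (together with the observation that each $u_i \in U(\mathfrak{f})$ permutes the pointed set $(\tilde{\mathfrak{P}}^{-1}\mathfrak{f}/\mathfrak{f})\setminus\{0\}$, so that reindexing $x \mapsto u_i^{-1}x$ leaves the outer sum invariant). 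Applying \autoref{prop:Eis_integral_on_modular_symbols} to each remaining integral over $\Delta^\circ(\underline{u}_\sigma)$ identifies it with $\mathbf{\Phi}^{pN,qN}_\mathfrak{p}(v_0+\alpha(x),\underline{u}_\sigma,\Lambda(\mathfrak{I}))(P_\alpha^{p,q})$, producing the right-hand side of the statement.

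The main obstacle---and the only source of the constant $C$ in the statement---is justifying Stokes on the (unbounded) homotopies $H(\underline{u}_\sigma, x_\alpha)$ which reach the Tits boundary. By construction each $H(\underline{u}_\sigma,x_\alpha)$ is covered by finitely many Siegel sets attached to flags whose subspaces are of the form $\langle U_{\sigma(1)} \cdots U_{\sigma(i)} e_1 \mid i \in I \rangle$ for $I \subsetneq \{0,\ldots,N-1\}$; each such flag defines a good cusp because, picking any $i \in I$, $\gamma := U_{\sigma(1)}\cdots U_{\sigma(i)}$ lies in $\Gamma_0(\mathfrak{p},\Lambda(\mathfrak{I}))$ and $\gamma e_1$ lies in the smallest subspace of the flag. \autoref{thm:good_cusp_rapid_decrease} then reduces rapid decrease to the condition that $v_0+\alpha(x) + \Lambda(\mathfrak{p}\mathfrak{I})$ avoid the top subspace of each such flag. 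Pulling back through $\alpha$, this is the non-vanishing of a finite family $\{\phi_I\}_I$ of $k$-linear forms on $L$ evaluated at $1+x \in L$. Applying \autoref{lemma:Colmez_Schneps} to this finite collection produces a constant $C$ such that, whenever $\mathrm{N}\tilde{\mathfrak{P}}>C$ and the residue degree of $\tilde{\mathfrak{P}}$ is one, every nonzero $x \in \tilde{\mathfrak{P}}^{-1}\mathfrak{f}/\mathfrak{f}$ satisfies $\phi_I(1+x) \neq 0$ for all $I$. This yields the required rapid decrease, legitimises Stokes, and completes the proof.
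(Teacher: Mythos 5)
Your proposal is correct and follows essentially the same route as the paper: you reduce the smoothed zeta value to $\sum_x \int_{\mathcal{D}} E_\mathfrak{p}(v_0+\alpha(x);\psi^{pN,qN},\Lambda(\mathfrak{I}))$ (the paper simply reruns the proof of \autoref{prop:Eisenstein_zeta_value} for the smoothed sum rather than subtracting two instances of its statement, which spares it the mild extension to the lattice $\alpha(\mathfrak{f}\mathfrak{a}^{-1}\tilde{\mathfrak{P}}^{-1})$, not of the shape $\Lambda(\mathfrak{I}')$), then use the decomposition \eqref{eq:fund_domain_decomposition}, the homotopies $H(\underline{u}_\sigma,x_\alpha)$ with Stokes and the cancellation of side faces via \eqref{eq:homotopy_equivariance_1} and the $U(\mathfrak{f})^1$-invariance of the summed series, and conclude by \autoref{prop:Eis_integral_on_modular_symbols}, with the constant $C$ produced from \autoref{lemma:Colmez_Schneps} exactly as in the paper. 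The one point to state more carefully is that \autoref{thm:good_cusp_rapid_decrease} needs the linear forms to be nonvanishing on the whole coset $1+x+\mathfrak{f}(\mathfrak{a}\mathfrak{P})^{-1}$ (equivalently, $v_0+\alpha(x)+\Lambda(\mathfrak{p}\mathfrak{I})$ must avoid the relevant subspaces), not merely at the point $1+x$; this is precisely what \autoref{lemma:Colmez_Schneps} delivers, so the gap is only in phrasing.
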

\begin{proof}
First let us define a collection $\{\phi_i\}_{i \in I}$ as in \autoref{lemma:Colmez_Schneps}. Writing $\underline{u}_{\sigma,j}$ ($0 \leq j <N$) for the components of the $N$-tuple $\underline{u}_\sigma$ in \eqref{eq:def_u_sigma}, we consider the finite set $\{W_i\}_{i \in I}$ of all proper subspaces $W_i$ of $V_k$ of the form $\langle \underline{u}_{\sigma,j}e_1 \ | \ j \in J \rangle$, for all $\sigma \in S_{N-1}$ and all $J \subseteq \{0,\ldots,N-1\}$. For each subspace $W_i$ we choose a non-zero linear form $\phi_i$ on $L$ such that $W_i \subseteq \ker(\phi_i \circ \alpha^{-1})$. Let $C(I)$ be the constant provided by \autoref{lemma:Colmez_Schneps}. 

Now take $C>C(I)$ such that any prime ideal $\tilde{\mathfrak{P}}$ with $\mathrm{N}\tilde{\mathfrak{P}}>C$ is coprime to $\mathfrak{a}$, $\mathfrak{f}$ and $\mathfrak{P}$; then \autoref{lemma:Colmez_Schneps} and \autoref{thm:good_cusp_rapid_decrease} show that, for any $l \in  \mathfrak{\tilde{P}}^{-1} \mathfrak{f}-\mathfrak{f}$, the Eisenstein series $E_\mathfrak{p}(v_0+\alpha(l),P_{\alpha}^{pN,qN};\psi^{p,q} ,\Lambda(\mathfrak{I}))$ is rapidly decreasing on every Siegel set of every cusp corresponding to a flag $W_\bullet$ given by a chain of subpaces $W_i$ with $i \in I$; in particular, for such $l$ we have
$$
\int_{\partial H(\underline{u}_\sigma,x_\alpha)} E_\mathfrak{p}(v_0+\alpha(l),P_{\alpha}^{p,q};\psi^{pN,qN},\Lambda(\mathfrak{I})) = 0.
$$
Since 
$$
\sum_{\substack{l \in \mathfrak{\tilde{P}}^{-1} \mathfrak{f}/\mathfrak{f} \\ l \neq 0}} E_\mathfrak{p}(v_0+\alpha(l),P_\alpha^{p,q};\psi^{pN,qN},\Lambda(\mathfrak{I}))
$$
is invariant under $U(\mathfrak{f})^1$, the equivariance property \eqref{eq:homotopy_equivariance_1} shows that
\begin{equation} \label{eq:homotopy_Eis_computation}
\begin{split}
0 & = \sum_{\sigma \in S_{N-1}} \mathrm{sgn}(\sigma)  \int_{\partial H(\underline{u}_\sigma,x_\alpha)} \sum_{\substack{l \in \mathfrak{\tilde{P}}^{-1}\mathfrak{f}/\mathfrak{f} \\ l \neq 0}} E_\mathfrak{p}(v_0+\alpha(l),P_\alpha^{p,q};\psi^{pN,qN},\Lambda(\mathfrak{I})) \\
&= \sum_{\sigma \in S_{N-1}} \mathrm{sgn}(\sigma) \int_{\Delta^\circ(\underline{u}_\sigma)} \sum_{\substack{l \in \mathfrak{\tilde{P}}^{-1}\mathfrak{f}/\mathfrak{f} \\ l \neq 0}} E_\mathfrak{p}(v_0+\alpha(l),P_\alpha^{p,q};\psi^{pN,qN},\Lambda(\mathfrak{I})) \\
& \quad - \sum_{\sigma \in S_{N-1}} \mathrm{sgn}(\sigma) \int_{\tilde{\Delta}(\underline{u}_\sigma,x_\alpha)} \sum_{\substack{l \in \mathfrak{\tilde{P}}^{-1} \mathfrak{f}/\mathfrak{f} \\ l \neq 0}} E_\mathfrak{p}(v_0+\alpha(l),P_\alpha^{p,q};\psi^{pN,qN},\Lambda(\mathfrak{I})).
\end{split}
\end{equation}
The proof of \autoref{prop:Eisenstein_zeta_value} shows that
$$
[U(\mathfrak{f}):U(\mathfrak{f})']^{-1}\int_{X(\mathfrak{f})} \sum_{\substack{l \in \mathfrak{\tilde{P}}^{-1} \mathfrak{f}/\mathfrak{f} \\ l \neq 0}} E_\mathfrak{p}(v_0+\alpha(l),P_\alpha^{p,q};\psi^{pN,qN},\Lambda(\mathfrak{I})) = \det(\sigma_i(\alpha_j)) \zeta^{p,q}_{\mathfrak{f},\mathfrak{P},\tilde{\mathfrak{P}}}(\mathfrak{a},0).
$$
We compute
\begin{multline*}
\int_{X(\mathfrak{f})} \sum_{\substack{l \in \mathfrak{\tilde{P}}^{-1}\mathfrak{f}/\mathfrak{f} \\ l \neq 0}} E_\mathfrak{p}(v_0+\alpha(l),P_\alpha^{p,q};\psi^{pN,qN},\Lambda(\mathfrak{I})) \\
\begin{split}
&= \sum_{\sigma \in S_{N-1}} \mathrm{sgn}(\sigma) \int_{\tilde{\Delta}(\underline{u}_\sigma,x_\alpha)} \sum_{\substack{l \in \mathfrak{\tilde{P}}^{-1}\mathfrak{f}/\mathfrak{f} \\ l \neq 0}} E_\mathfrak{p}(v_0+\alpha(l),P_\alpha^{p,q};\psi^{pN,qN},\Lambda(\mathfrak{I}))  \quad (\text{by } \eqref{eq:fund_domain_decomposition}) \\
&= \sum_{\sigma \in S_{N-1}} \mathrm{sgn}(\sigma) \int_{\Delta^\circ(\underline{u}_\sigma)} \sum_{\substack{l \in \mathfrak{\tilde{P}}^{-1}\mathfrak{f}/\mathfrak{f} \\ l \neq 0}} E_\mathfrak{p}(v_0+\alpha(l),P_\alpha^{p,q};\psi^{pN,qN},\Lambda(\mathfrak{I}))  \quad (\text{by } \eqref{eq:homotopy_Eis_computation}) \\
&= \sum_{\sigma \in S_{N-1}} \mathrm{sgn}(\sigma) \sum_{\substack{l \in \mathfrak{\tilde{P}}^{-1} \mathfrak{f}/\mathfrak{f} \\ l \neq 0}} \mathbf{\Phi}^{pN,qN}_\mathfrak{p}(v_0+\alpha(l),\underline{u}_\sigma,\Lambda(\mathfrak{I}))(P_\alpha^{p,q}) \quad (\text{by Prop. } \ref{prop:Eis_integral_on_modular_symbols}).
\end{split}
\end{multline*}
\end{proof}

\medskip
\noindent
{\it Acknowledgments.} We would like to thank Don Blasius for patiently explaining us the fine algebraicity consequences that could be derived from our Theorem \ref{Tfinal}. We also thank Javier Fres\'an as well as our collaborator on a related project Akshay Venkatesh for their comments and corrections on this paper. The second author is partially supported by the Agence Nationale de la Recherche (ANR-18-CE40-0029 grant)  and by  from Sorbonne Universit\'e (Emergence DELCO grant). All three authors thank the anonymous referees for their careful reading of the manuscript.

\medskip

\bibliographystyle{plain}

\bibliography{bibli}

\end{document}